\newtheorem{alg}{Algorithm}
\newtheorem{assumption}{Assumption}
\journalname{JOTA}
\begin{document}

\title{Adaptive 
	generalized conditional gradient method for multiobjective optimization 
}


\author{Anteneh Getachew  Gebrie      \and
  Ellen Hidemi Fukuda
}


\institute{A.G. Gebrie (corresponding author) \at
 Perimeter Institute for Theoretical Physics, Waterloo, ON, Canada\\
 Graduate School of Informatics, Kyoto University, 606–8501, Kyoto,
Japan \\
        \email{antgetm@gmail.com}         \and
         E.H. Fukuda  \at
   Graduate School of Informatics, Kyoto University, 606–8501, Kyoto,
Japan \\
\email{ellen@i.kyoto-u.ac.jp}\vspace{2mm}\\
This work is supported by Matsumae International Foundation of Japan and Grant-in-Aid for Scientific Research (C) (19K11840) from Japan Society for the Promotion of Science.
}

\date{Received: date / Accepted: date}

\maketitle

\begin{abstract}
In this paper, we propose a generalized conditional gradient method for multiobjective optimization, where the objective function is the sum of a smooth function and a possibly nonsmooth function. The proposed method is an improved extension of the classical Frank-Wolfe method of single-objective optimization to the multiobjective optimization problem. The method combines the so-called normalized descent direction as an adaptive procedure and the line search technique. We prove the convergence of the algorithm with respect to Pareto optimality under mild assumptions. The iteration complexity for obtaining an approximate Pareto critical point and the convergence rate in terms of a merit function is also analyzed.   Finally, we report some numerical results, which demonstrate the feasibility and competitiveness of the proposed method.
\keywords{Multiobjective optimization \and Conditional gradient method \and Frank-Wolfe \and Descent direction \and Linesearch technique}
\end{abstract}

\section{Introduction}\label{sec1}

Several problems that appear in engineering, economics, management science, and medicine are modeled as multiobjective optimization problems~\cite{AAAA1,AAAA2}. Multiobjective optimization is the problem of finding a vector of decision variables that satisfies constraints (if they exist) while optimizing a vector-valued function in terms of (weakly) Pareto. The two commonly used types of methods for solving such problems are the {\it scalarization} techniques~\cite{AA2,AA1} and the {\it metaheuristics}~\cite{AA3}. 

Multiobjective optimization has received much attention over the past few decades due to its wide range of practical applications. Recently, proximal methods, gradient-type methods, and subgradient methods have been developed for multiobjective optimization problems by extending their single-objective optimization counterparts~\cite{2,3a,4,1,3}. They are often called descent methods because they decrease one or more objective functions within some iterations. Among these approaches, first-order, second-order, and derivative free methods have been proposed; see~\cite{4EEE,4DDD,4CCC} and references therein. For instance, the steepest descent method and the proximal point method for unconstrained multiobjective problems were first proposed in~\cite{4ab} and~\cite{4a}, respectively. The multiobjective proximal gradient method was also introduced in~\cite{4aa} for problems with composite objectives, and studied in~\cite{22eliant,20pomyant,21rakant,13,12,4aa}.

Moreover, the {\it Frank-Wolfe}~\cite{5} ({\it conditional gradient}) method proposed for convex continuously differentiable optimization problems, has been modified and extended to single-objective~\cite{8,7,6} and multiobjective optimization~\cite{2,9,1} in several ways. The conditional gradient method has attracted the attention of many researchers and has grown in popularity because it is often less computationally expensive than proximal-type methods and can solve large-scale problems.
Recently, in~\cite{8}, the conditional gradient method was presented by defining the ($\varepsilon$-normalized) descent direction method for single-objective optimization problems with convex and continuously differentiable objective functions. This method combines the ideas of the gradient method and the descent direction method for smooth single-objective optimization. To the best of our knowledge, there is no work related to this normalized descent direction approach~\cite{8} in the multiobjective settings or even in the single-objective case with more general objective functions.

Here, we show that~\cite{8} can be interpreted as a generalized conditional gradient method that allows the exploration of Pareto optimal points for multiobjective optimization problems. By adopting some ideas of~\cite{8}, we introduce a new descent search direction, using an adaptive method with line search, which provides a new generalized conditional gradient method for convex multiobjective optimization,  where each objective function is the sum of a smooth and a possibly nonsmooth vector-valued functions. Unlike the methods in \cite{2,1}, the adaptive step size selection strategy needs only estimates rather than the exact values of the Lipschitz constants, which further improves the implementation process of the proposed method. 
 
The major contribution of our work is presenting a novel adaptive generalized conditional gradient method with Armijo line search for composite multiobjective optimization. The implementation of the algorithm does not necessarily require the exact values of the Lipschitz constants; their estimate values are enough.  The procedures in the algorithm ensure that every accumulation point of the sequence generated by the method is a Pareto optimal point. Furthermore, we prove the iteration complexity and the convergence rate of our method under reasonable assumptions and by using a merit function~\cite{10aabbcc}. We show that the algorithm finds the first $\mu$-approximate Pareto critical point within ${\bf\it {O}}(1/\mu^{2})$ number of iterations, and the convergence rate is ${ {\bf\it O}}(1/k)$ where $k$ is the number of iterations.

The paper is organized as follows. Section~\ref{Sec2} presents the definition and basic properties of some important concepts. The normalized descent direction of the vector-valued objective function is introduced and characterized in Section~\ref{Sec22}. In Section~\ref{Sec3}, we present the proposed algorithm, and Section~\ref{Sec4} contains its convergence analysis.  Finally, in Section~\ref{Sec5}, some numerical experiments are presented to demonstrate the efficiency of our method.

\section{Preliminaries}\label{Sec2}
Let $\mathbb{R}^{n}$ be the $n$-dimensional Euclidean space. For $x,y\in\mathbb{R}^{n}$, we say that $x\preceq y$ ($y\succeq x$) if $y-x\in\mathbb{R}^{n}_{+}=\{z\in\mathbb{R}^{n}:z_{i}\geq 0\hspace{2mm}  \forall i=1,\ldots,n\}$, and $x\prec y$ ($y \succ x$) if $y-x\in\mathbb{R}^{n}_{++}=\{z\in\mathbb{R}^{n}:z_{i}> 0\hspace{2mm}  \forall i=1,\ldots,n\}$. We use $\top$ to denote the transpose of a matrix, and $ e_{n}=(1,\dots,1)^{\top}\in\mathbb{R}^{n}$.

Let us now review some definitions, important concepts, and properties of convex extended real-valued and vector-valued functions. For a complete review, see the references by Bauschke et al.~\cite{14} and Miettinen~\cite{15}, for extended real-valued and vector-valued functions, respectively.
Take a convex function  $h \colon \mathbb{R}^{n}\rightarrow\mathbb{R}\cup\{+\infty\}$. Then, the domain of $h$ is denoted by dom$(h)=\{x\in\mathbb{R}^{n} \colon h(x)<+\infty\}$. For $x\in\mbox{dom}(h)$ and $d\in\mathbb{R}^{n}$, the directional derivative of $h$ at a point $x$ along the direction $d$ is given by
$$h'(x;d):=\lim_{t\downarrow 0}\frac{h(x+td)-h(x)}{t}.$$	
It is easy to
see that $h'(x;d)=\langle\nabla h(x),d\rangle$ when $h$ is differentiable at $x\in \mbox{int(dom}(h))$, where $\nabla h(x)$ denotes the
gradient of $h$ at $x$. For $x\in \mbox{dom}(h)$ and $d\in\mathbb{R}^{n}$, define $\bar{h}\colon(0,+\infty)\rightarrow\mathbb{R}\cup\{+\infty\}$ by
$\bar{h}(t)=(h(x+td)-h(x))/t.$	
The function $\bar{h}$	is non-decreasing, and hence,
$h(x+d)-h(x)\geq(h(x+td)-h(x))/t$
for all $t\in(0,1)$. Moreover, this gives 	$h(x+d)-h(x)\geq h'(x;d)$. If $h=f+g$, where
$g:\mathbb{R}^{n}\rightarrow\mathbb{R}\cup\{+\infty\}$  is convex proper and lower semicontinuous function and  $f:\mathbb{R}^{n}\rightarrow\mathbb{R}$ is convex and differentiable function, then,
$h'(x;d)=\langle\nabla f_{i}(x),d\rangle+g'_{i}(x,d)$ for $x\in \mbox{dom}(f)$ and $d\in\mathbb{R}^{n}$. For $\mathcal{F}=(f_{1},\ldots,f_{m})$ the Jacobian matrix of $\mathcal{F}$ is the
$m\times n$ matrix defined by
$$J\mathcal{F}(x):=[\nabla f_{1}(x),\ldots,\nabla f_{m}(x)]^{\top}.$$	

Now, consider the constrained multiobjective optimization problem: 
\begin{eqnarray}\label{prob}
&&\min \: \mathcal{V}(x):=\mathcal{F}(x)+\mathcal{G}(x)\\&&\hspace{1mm}\mbox{s.t. } \: x\in \Omega,\nonumber
\end{eqnarray}
where $\Omega$ is non-empty compact convex subset of $\mathbb{R}^{n}$, $\mathcal{V}\colon\mathbb{R}^{n}\rightarrow(\mathbb{R}\cup\{+\infty\})^{m}$, $\mathcal{F}:=(f_{1},\ldots,f_{m})$, and $\mathcal{G}:=(g_{1},\ldots,g_{m})$ with $f_{i}\colon\mathbb{R}^{n}\rightarrow\mathbb{R}$ and $g_{i}\colon\mathbb{R}^{n}\rightarrow\mathbb{R}\cup\{+\infty\}$. 
Let  $$\mathcal{V}_{i}:=f_{i} +g_{i},$$ where $f_{i}$ and $g_{i}$ ($i\in\{1,\ldots,m\}$) are functions given under (\ref{prob}), and therefore, $\mathcal{V}$ in (\ref{prob}) is given by $\mathcal{V}=
(\mathcal{V}_{1},\ldots,\mathcal{V}_{m})$. Throughout this paper, we make
the following assumptions.

\begin{assumption} \label{AA-111}
 \begin{description}
	\item[\textbf{(A1)}] $f_{i}$ is convex, continuously differentiable, and $\nabla f_{i}\colon\mathbb{R}^{n}\rightarrow \mathbb{R}^{n}$ is $L_{i}$-Lipschitz continuous for
	$i\in \{1,\ldots,m\}$.
	\item[\textbf{(A2)}]  $g_{i}$  is convex, proper, lower semicontinuous, but not necessarily differentiable, and $g_{i}$ is $L_{g_{i}}$-Lipschitz continuous in its domain for
	$i\in\{1,\ldots,m\}$.	
\end{description}
\end{assumption}  
A point $\bar{x}\in\Omega$ is called Pareto optimal of~\eqref{prob} if there is no other $x\in\Omega$ with $\mathcal{V}(x)\preceq \mathcal{V}(\bar{x})$ and $\mathcal{V}(x)\neq \mathcal{V}(\bar{x})$. Moreover, a point $\bar{x}\in\Omega$ is weakly Pareto optimal point of~\eqref{prob} if there is no other $x\in\Omega$ with $\mathcal{V}(x)\prec \mathcal{V}(\bar{x})$. It is clear that every Pareto optimal point is a weakly Pareto optimal point, but the converse is not true. 

Let $\bar{x}$ be
a weakly Pareto optimal point of (\ref{prob}). Then $\bar{x}$ must satisfy
$$(\mathcal{V}'_{1}(\bar{x};d),\ldots,\mathcal{V}'_{m}(\bar{x};d))\notin -\mathbb{R}_{++}^{n} \mbox{ for all } d\in\mathbb{R}^{n}.$$ This shows that
\begin{eqnarray}\label{prob33}\max_{i=1,\ldots,m}\mathcal{V}'_{i}(\bar{x};d)\geq 0 \quad \mbox{ for all } d\in\mathbb{R}^{n}.
\end{eqnarray}
The inequality in (\ref{prob33}) is sometimes referred to in the literature as the criticality condition
or the first-order necessary condition for weak efficiency of~\eqref{prob} and $\bar{x}\in \mbox{dom}(\mathcal{G})=\cap_{i=1}^{m}\mbox{dom}(g_{i})$ satisfying~\eqref{prob33}
is often called a Pareto stationary (critical) point of~\eqref{prob}. Furthermore, convexity of each $\mathcal{V}_{i}$, $i=1,\ldots,m$
ensures that every Pareto critical point of~\eqref{prob} is a weakly Pareto optimal point of~\eqref{prob}. Note that if each $\mathcal{V}_{i}$, $i=1,\ldots,m$ is a strictly or strongly convex function, then Pareto critical points of~\eqref{prob} are Pareto optimal. Gradient-based methods have been used to solve several multiobjective optimization problem. Gradient-based approaches, however, are not entirely capable of solving problems such as portfolio optimization \cite{19qq}, and new and generalized problems of the form~\eqref{prob} and their solution approaches are constantly needed for a better theoretical insights and handle variety of optimization problems that appear in applications, for example, in Machine Learning (deep neural network) \cite{16aaaaa} and portfolio optimization \cite{19qq,6aaaa}.

\section{Normalized Descent Search Directions}\label{Sec22}

For the multiobjective optimization (\ref{prob}), let  
\begin{equation}
  \label{eq:Lipschitz}
  \mathcal{D}:=\mbox{dom}(\mathcal{G})=\bigcap_{i=1}^{m}\mbox{dom}(g_{i}),\hspace{1mm}\hspace{0.5mm} L_{G}:=\max\limits_{i=1,\ldots,m}L_{g_{i}}, \hspace{0.5mm}\mbox{ and }\hspace{0.5mm} L:=\max\limits_{i=1,\ldots,m}L_{i}.
\end{equation}
We consider (\ref{prob}) under $\Omega\subset\mathcal{D}$.  Note that from the so-called descent lemma \cite{10}, we have
	\begin{eqnarray}	\label{descthey}
	\mathcal{F}(y)-\mathcal{F}(x)\preceq  J\mathcal{F}(x)(y-x)+\frac{L}{2}\|x-y\|^{2}e_{m} \mbox{ for all } (x,y)\in\mathcal{D}\times\mathcal{D}.	\end{eqnarray}

We now introduce our $\varepsilon$-normalized descent direction of the
objective function~$\mathcal{V}$, which can also be thought of as an extended and modified version of \cite[Definition 2.3]{8}. This definition and its related results will be used to establish our algorithm.
\begin{definition} ($\varepsilon$-normalized descent direction) Let $\varepsilon>0$. A non-zero vector $d\in\mathbb{R}^{n}$ is called an $\varepsilon$-normalized descent direction of $\mathcal{V}$ of the
	multiobjective optimization (\ref{prob}) at the point $x\in \Omega$ if the following holds:
	$$\max\limits_{i=1,\ldots,m}\big\{\langle \nabla f_{i}(x),d \rangle+g_{i}(x+d)-g_{i}(x)\big\}+\varepsilon\|d\|^{2}\leq 0.$$	
\end{definition}

The next two results show how to construct an $\varepsilon$-normalized descent direction given a point that satisfies some condition.

\begin{lemma}\label{lemS2-02aa} ($\varepsilon$-normalization) Let $u\in\mathbb{R}^{n}\setminus\{0\}$, $x\in \Omega$, and $\varepsilon>0$. If there is $t$ with $$\max\limits_{i=1,\ldots,m}\big\{\langle \nabla f_{i}(x),u \rangle+g_{i}(x+u)-g_{i}(x)\big\}+t\leq 0$$ and $0<t\leq\varepsilon\|u\|^{2}$,  
	then the vector $d:=\frac{t u}{\varepsilon\|u\|^{2}}$ is an $\varepsilon$-normalized descent direction of~$\mathcal{V}$ at~$x$.
\end{lemma}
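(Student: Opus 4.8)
The plan is to write $d$ as a positive multiple of $u$ with ratio at most one, and then use the convexity of the nonsmooth parts $g_{i}$ so that the bracketed expression in the definition scales \emph{linearly} with that ratio, while the penalty $\varepsilon\|\cdot\|^{2}$ scales \emph{quadratically}; the specific ratio $t/(\varepsilon\|u\|^{2})$ is tuned precisely so that the negative linear gain cancels the positive quadratic cost.

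First I would set $\lambda=\dfrac{t}{\varepsilon\|u\|^{2}}$, so that $d=\lambda u$; since $0<t\le\varepsilon\|u\|^{2}$ we have $0<\lambda\le 1$, hence $d\ne 0$. Because $x\in\Omega\subset\mathcal{D}=\mbox{dom}(\mathcal{G})$ we have $g_{i}(x)<+\infty$, and the hypothesis forces $g_{i}(x+u)<+\infty$ for every $i$ (otherwise the maximum on its left-hand side would be $+\infty$, contradicting the inequality). Thus $x+\lambda u=(1-\lambda)x+\lambda(x+u)$ belongs to $\mbox{dom}(g_{i})$, and convexity of $g_{i}$ gives
$$g_{i}(x+d)-g_{i}(x)=g_{i}(x+\lambda u)-g_{i}(x)\le\lambda\big(g_{i}(x+u)-g_{i}(x)\big)\quad(i=1,\ldots,m).$$

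Next, combining this with the identity $\langle\nabla f_{i}(x),d\rangle=\lambda\langle\nabla f_{i}(x),u\rangle$, I would obtain, for each $i$,
$$\langle\nabla f_{i}(x),d\rangle+g_{i}(x+d)-g_{i}(x)\le\lambda\big(\langle\nabla f_{i}(x),u\rangle+g_{i}(x+u)-g_{i}(x)\big).$$
Since $\lambda>0$, taking the maximum over $i$ on both sides and invoking the hypothesis $\max_{i}\{\langle\nabla f_{i}(x),u\rangle+g_{i}(x+u)-g_{i}(x)\}\le-t$ yields
$$\max_{i=1,\ldots,m}\big\{\langle\nabla f_{i}(x),d\rangle+g_{i}(x+d)-g_{i}(x)\big\}\le-\lambda t.$$

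Finally, I would add the penalty term, using $\varepsilon\|d\|^{2}=\varepsilon\lambda^{2}\|u\|^{2}$ together with the defining relation $\varepsilon\lambda\|u\|^{2}=t$, which give $\varepsilon\|d\|^{2}=\lambda t$; hence
$$\max_{i=1,\ldots,m}\big\{\langle\nabla f_{i}(x),d\rangle+g_{i}(x+d)-g_{i}(x)\big\}+\varepsilon\|d\|^{2}\le-\lambda t+\lambda t=0,$$
which is exactly the $\varepsilon$-normalized descent condition for the non-zero vector $d$. I do not expect a genuine obstacle here; the only place calling for a little care is the convexity step for the $g_{i}$, i.e.\ verifying that both $x$ and $x+u$ lie in $\mbox{dom}(g_{i})$ so that the convex-combination inequality is legitimate, and noting that distributing the maximum over the positive scalar $\lambda$ is harmless.
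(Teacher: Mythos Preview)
Your argument is correct and is essentially the same as the paper's own proof: both use the convex-combination inequality for each $g_{i}$ with the weight $\lambda=t/(\varepsilon\|u\|^{2})\in(0,1]$, factor this scalar out of the bracketed expression, and then balance $-\lambda t$ against $\varepsilon\|d\|^{2}=\lambda t$. The only cosmetic difference is that you name $\lambda$ explicitly and add the domain check for $g_{i}$, while the paper works directly with $t/(\varepsilon\|u\|^{2})$.
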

\begin{proof} Since $0<\frac{t} {\varepsilon\|u\|^{2}}\leq1$ and $g_{i}$ is convex, we have	
	\begin{eqnarray}
	g_{i}\Big(x+\frac{t u}{\varepsilon\|u\|^{2}}\Big)\nonumber&=&g_{i}\Big(\frac{t }{\varepsilon\|u\|^{2}}(x+u)+(1-\frac{t }{\varepsilon\|u\|^{2}})x\Big)	\nonumber\\&\leq&\frac{t }{\varepsilon\|u\|^{2}}g_{i}(x+u)+\Big(1-\frac{t }{\varepsilon\|u\|^{2}}\Big)g_{i}(x).\nonumber
	\end{eqnarray}	
	Thus, using the above inequality we have
	\begin{eqnarray}
	\nonumber&&\max\limits_{i=1,\ldots,m}\big\{\big\langle \nabla f_{i}(x),d \big\rangle+g_{i}(x+d)-g_{i}(x)\big\}+\varepsilon\|d\|^{2}	\nonumber\\&&=\max\limits_{i=1,\ldots,m}\Big\{\Big\langle \nabla f_{i}(x),\frac{t u}{\varepsilon\|u\|^{2}} \Big\rangle+g_{i}\Big(x+\frac{t u}{\varepsilon\|u\|^{2}}\Big)-g_{i}(x)\Big\}+\varepsilon\Big\|\frac{t u}{\varepsilon\|u\|^{2}}\Big\|^{2}\nonumber\\&&\leq\max\limits_{i=1,\ldots,m}\Big\{\frac{t}{\varepsilon\|u\|^{2}}\Big(\langle \nabla f_{i}(x),u \rangle+g_{i}(x+u)-g_{i}(x)\Big)\Big\}+\frac{t^{2} }{\varepsilon\|u\|^{2}}\nonumber\\&&=\frac{t}{\varepsilon\|u\|^{2}}\Big(\max\limits_{i=1,\ldots,m}\Big\{\langle \nabla f_{i}(x),u \rangle+g_{i}(x+u)-g_{i}(x)\Big\}+t\Big)\leq 0,\nonumber
	\end{eqnarray}
    where the last inequality holds from the assumption of the lemma. 
	Therefore, $d$ is an $\varepsilon$-normalized descent direction of  $\mathcal{V}$ at~$x$.	\qed
\end{proof}

\begin{lemma}\label{lemS2-02}
	Let $u\in\mathbb{R}^{n}\setminus\{0\}$, $x\in \Omega$, and $\varepsilon>0$, where $u$ is not an $\varepsilon$-normalized descent direction of $\mathcal{V}$ at $x$. If $$\max\limits_{i=1,\ldots,m}\big\{\langle \nabla f_{i}(x),u \rangle+g_{i}(x+u)-g_{i}(x)\}<0,$$ then $d:=\frac{t u}{\varepsilon\|u\|^{2}}$ is an $\varepsilon$-normalized descent direction of $\mathcal{V}$ at $x$ for any real number $t$ such that $$0 <t\leq\Big|\max\limits_{i=1,\ldots,m}\big\{\langle \nabla f_{i}(x),u \rangle+g_{i}(x+u)-g_{i}(x)\}\Big|.$$\end{lemma}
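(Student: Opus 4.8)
The plan is to mimic the proof of Lemma~\ref{lemS2-02aa}. Write $M:=\max_{i=1,\ldots,m}\{\langle\nabla f_i(x),u\rangle+g_i(x+u)-g_i(x)\}$, so that the hypotheses read $M<0$, $0<t\le -M$, and, since $u$ is not an $\varepsilon$-normalized descent direction, $M+\varepsilon\|u\|^2>0$. Set $\sigma:=t/(\varepsilon\|u\|)$, so $d=\sigma u$ and $x+d=\sigma(x+u)+(1-\sigma)x$.

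First I would show that $0<\sigma\le 1$; this is exactly where the hypothesis that $u$ fails to be an $\varepsilon$-normalized descent direction enters, since it yields $-M<\varepsilon\|u\|^2$, which together with $0<t\le -M$ is meant to pin $\sigma$ down in $(0,1]$. With $\sigma\le 1$ in hand, convexity of each $g_i$ gives
\begin{align*}
g_i(x+d)=g_i\big(\sigma(x+u)+(1-\sigma)x\big)\le \sigma g_i(x+u)+(1-\sigma)g_i(x),
\end{align*}
hence $g_i(x+d)-g_i(x)\le\sigma\big(g_i(x+u)-g_i(x)\big)$ for each $i$. Substituting $d=\sigma u$ (so $\langle\nabla f_i(x),d\rangle=\sigma\langle\nabla f_i(x),u\rangle$ and $\|d\|^2=\sigma^2\|u\|^2$) into the expression defining an $\varepsilon$-normalized descent direction, using the last estimate, and pulling the nonnegative factor $\sigma$ through the maximum, I would obtain
\begin{align*}
\max_{i=1,\ldots,m}\{\langle\nabla f_i(x),d\rangle+g_i(x+d)-g_i(x)\}+\varepsilon\|d\|^2\le \sigma M+\varepsilon\sigma^2\|u\|^2.
\end{align*}

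It then remains to check $\sigma M+\varepsilon\sigma^2\|u\|^2\le 0$; re-inserting $\sigma=t/(\varepsilon\|u\|)$ turns this into a scalar inequality in $t$ that should close using the upper bound imposed on $t$, after which $d$ is an $\varepsilon$-normalized descent direction of $\mathcal{V}$ at $x$. The step I expect to be most delicate is the normalization bookkeeping: one must extract precisely $\sigma\in(0,1]$ (the crude bound $t<\varepsilon\|u\|^2$ only gives $\|d\|<\|u\|$), because the convexity estimate on $g_i$ breaks down for $\sigma>1$, and one must then match the resulting scalar estimate against the stated range of $t$. The remainder is a routine imitation of Lemma~\ref{lemS2-02aa}.
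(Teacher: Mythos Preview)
Your approach is essentially the paper's, only unpacked: the paper simply verifies that $t$ satisfies $0<t<\varepsilon\|u\|^{2}$ and $M+t\le 0$, and then invokes Lemma~\ref{lemS2-02aa} verbatim, whereas you redo the convexity computation inside that lemma. So the overall strategy is the same.

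The ``delicate normalization bookkeeping'' you anticipate is caused by a typo in the statement, not by a real obstruction. The denominator in the definition of $d$ should be $\varepsilon\|u\|^{2}$, not $\varepsilon\|u\|$; the paper's own proof writes $d=\dfrac{tu}{\varepsilon\|u\|^{2}}$, and the algorithm (\textbf{STEP~2}) likewise uses $d_{k}=\dfrac{t_{k}y_{k}}{\varepsilon_{k}\|y_{k}\|^{2}}$. With the corrected $\sigma=t/(\varepsilon\|u\|^{2})$, the bound $0<t\le -M<\varepsilon\|u\|^{2}$ (the last inequality coming from the failure of $u$ to be an $\varepsilon$-normalized descent direction) gives $\sigma\in(0,1)$ immediately, and your final scalar inequality $\sigma M+\varepsilon\sigma^{2}\|u\|^{2}\le 0$ becomes $\sigma(M+t)\le 0$, which is exactly the hypothesis $t\le -M$. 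As written in the statement (with $\|u\|$), the lemma is in fact false when $\|u\|>1$: taking $t=-M$ yields $\sigma M+\varepsilon\sigma^{2}\|u\|^{2}=\dfrac{t}{\varepsilon\|u\|}(M+t\|u\|)=\dfrac{-M}{\varepsilon\|u\|}\cdot(-M)(\|u\|-1)>0$, so your worry there is well founded but points to a misprint rather than a gap in the argument.
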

\begin{proof}
	Here, $d=\frac{t u}{\varepsilon\|u\|^{2}}\neq 0$ and $t$ satisfies the  conditions $0<t<\varepsilon\|u\|^{2}$ and $\max\limits_{i=1,\ldots,m}\big\{\langle \nabla f_{i}(x),u \rangle+g_{i}(x+u)-g_{i}(x)\big\}+t\leq 0$. Hence, by Lemma \ref{lemS2-02aa}, we have $d=\frac{t u}{\varepsilon\|u\|^{2}}$ is an $\varepsilon$-normalized descent direction of $\mathcal{V}$ at~$x$.	\qed
\end{proof}

In the subsequent results, we use the following notation. Letting $\varepsilon>0$, we define
$$\zeta_{\varepsilon}:=
\begin{cases}
\frac{2\varepsilon}{L}, \hspace{4mm}\mbox{if}\hspace{2mm} \varepsilon<\frac{L}{2}, \\ 1, \hspace{6mm}\mbox{otherwise}.
\end{cases}
$$
From this definition, note that $\zeta_{\varepsilon} \in (0,1]$.

\begin{lemma}\label{lemS2-03}  Let $\varepsilon>0$, $\beta\in (0,\frac{1}{2})$, $d\in\mathbb{R}^{n}\setminus\{0\}$, and $x\in \Omega$. If $d$ is an $\varepsilon$-normalized descent direction of $\mathcal{V}$ at $x$, then
	\begin{eqnarray}\label{S2-1}\mathcal{V}(x+\eta d)-\mathcal{V}(x)\preceq \eta\beta\big(J\mathcal{F}(x)d+\mathcal{G}(x+d)-\mathcal{G}(x)-\varepsilon\|d\|^{2}e_{m}\big)
	\end{eqnarray}	
	for all $\eta\in(0,\bar{\eta}]$
	where $\bar{\eta}=(1-2\beta)\zeta_{\varepsilon}$.
\end{lemma}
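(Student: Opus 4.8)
The plan is to prove the vector inequality (\ref{S2-1}) componentwise: fix an index $i\in\{1,\ldots,m\}$, produce an upper bound for $\mathcal{V}_{i}(x+\eta d)-\mathcal{V}_{i}(x)$, and then verify that this bound is dominated by $\eta\beta\big(\langle\nabla f_{i}(x),d\rangle+g_{i}(x+d)-g_{i}(x)-\varepsilon\|d\|^{2}\big)$; since $i$ is arbitrary, the claimed inequality for $\mathcal{V}=(\mathcal{V}_{1},\ldots,\mathcal{V}_{m})$ follows at once.

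First I would split $\mathcal{V}_{i}=f_{i}+g_{i}$ and treat the two pieces separately. For the smooth part, the descent lemma (\ref{descthey}), read componentwise with $y=x+\eta d$, gives $f_{i}(x+\eta d)-f_{i}(x)\le\eta\langle\nabla f_{i}(x),d\rangle+\frac{L}{2}\eta^{2}\|d\|^{2}$. For the nonsmooth part, I would first observe that $\bar{\eta}=(1-2\beta)\zeta_{\varepsilon}\le 1$, because $1-2\beta\in(0,1)$ (as $\beta\in(0,\frac12)$) and $\zeta_{\varepsilon}\le 1$ in both branches of its definition; hence $\eta\in(0,1)$, and convexity of $g_{i}$ gives $g_{i}(x+\eta d)=g_{i}\big(\eta(x+d)+(1-\eta)x\big)\le\eta g_{i}(x+d)+(1-\eta)g_{i}(x)$, i.e.\ $g_{i}(x+\eta d)-g_{i}(x)\le\eta\big(g_{i}(x+d)-g_{i}(x)\big)$. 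Adding the two estimates and abbreviating $\theta_{i}:=\langle\nabla f_{i}(x),d\rangle+g_{i}(x+d)-g_{i}(x)$, I obtain $\mathcal{V}_{i}(x+\eta d)-\mathcal{V}_{i}(x)\le\eta\theta_{i}+\frac{L}{2}\eta^{2}\|d\|^{2}$.

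It then remains to show $\eta\theta_{i}+\frac{L}{2}\eta^{2}\|d\|^{2}\le\eta\beta(\theta_{i}-\varepsilon\|d\|^{2})$. Dividing by $\eta>0$ and rearranging, this is equivalent to $(1-\beta)\theta_{i}+\beta\varepsilon\|d\|^{2}+\frac{L}{2}\eta\|d\|^{2}\le 0$. At this stage I would invoke the defining property of an $\varepsilon$-normalized descent direction, which forces $\theta_{i}\le-\varepsilon\|d\|^{2}$ for every $i$; substituting $(1-\beta)\theta_{i}\le-(1-\beta)\varepsilon\|d\|^{2}$, it suffices to prove $-(1-2\beta)\varepsilon\|d\|^{2}+\frac{L}{2}\eta\|d\|^{2}\le 0$, and since $d\neq 0$ this reduces to $\eta\le 2(1-2\beta)\varepsilon/L$. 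Finally I would close the argument using the case split built into $\zeta_{\varepsilon}$: if $\varepsilon<L/2$, then $\bar{\eta}=(1-2\beta)\cdot\frac{2\varepsilon}{L}=2(1-2\beta)\varepsilon/L$, so $\eta\le\bar{\eta}$ is exactly the needed bound; if $\varepsilon\ge L/2$, then $2\varepsilon/L\ge 1$, hence $\bar{\eta}=1-2\beta\le 2(1-2\beta)\varepsilon/L$ and again $\eta\le\bar{\eta}$ suffices.

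The computation is essentially routine once it is organized in this order; the only point that genuinely requires care is the interaction with the definition of $\zeta_{\varepsilon}$ — namely checking that $\bar{\eta}\le 1$ (so that the convexity estimate for $g_{i}$ is legitimate) and that in both regimes $\varepsilon<L/2$ and $\varepsilon\ge L/2$ the admissible step bound $\bar{\eta}$ is dominated by $2(1-2\beta)\varepsilon/L$. No stronger structural property of $\mathcal{V}$ beyond Assumption \ref{AA-111} and the descent lemma is needed.
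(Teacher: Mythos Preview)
Your proof is correct and follows essentially the same approach as the paper's: both combine the descent lemma for $\mathcal{F}$ with convexity of $\mathcal{G}$ to get the bound $\eta\theta_{i}+\tfrac{L}{2}\eta^{2}\|d\|^{2}$, then use the $\varepsilon$-normalized descent property $\theta_{i}\le-\varepsilon\|d\|^{2}$ to reduce the required inequality to $\eta\le 2(1-2\beta)\varepsilon/L$, and finally verify this via the definition of $\zeta_{\varepsilon}$. The paper packages the same computation in vector form by introducing the auxiliary quantities $\omega$ and $\Lambda(\omega)$, whereas you work componentwise and more directly; the content is identical.
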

\begin{proof} Let us denote $$\omega:=-\big(J\mathcal{F}(x)d+\mathcal{G}(x+d)-\mathcal{G}(x)\big)+\varepsilon\|d\|^{2}e_{m}$$  and $$\Lambda(\omega):=J\mathcal{F}(x)d+\frac{L\eta}{2}\|d\|^{2}e_{m}+\mathcal{G}(x+d)-\mathcal{G}(x)+\beta \omega.$$ Now, since $1-2\beta \in (0,1)$ and $\zeta_{\varepsilon} \in (0,1]$, we have $\eta\in (0,1)$. Thus, using the convexity of the function $\mathcal{G}$ and (\ref{descthey}), we obtain
	\begin{eqnarray}
	\mathcal{V}(x+\eta d)-\mathcal{V}(x)\nonumber&&=(\mathcal{F}(x+\eta d)-\mathcal{F}(x))+(\mathcal{G}(x+\eta d)-\mathcal{G}(x))	\nonumber\\&&=(\mathcal{F}(x+\eta d)-\mathcal{F}(x))+(\mathcal{G}((1-\eta)x+\eta (x+d))-\mathcal{G}(x))\nonumber\\&&\preceq \Big(\eta J\mathcal{F}(x)d+\frac{L\eta^{2}}{2}\|d\|^{2}e_{m}\Big)+(\eta\mathcal{G}(x+d)-\eta\mathcal{G}(x))\nonumber\\&&= \eta\Big(J\mathcal{F}(x)d+\frac{L\eta}{2}\|d\|^{2}e_{m}+\mathcal{G}(x+d)-\mathcal{G}(x)\Big)\nonumber\\&&= \eta\Lambda(\omega)-\eta\beta \omega.\nonumber
	\end{eqnarray}	
    Using the definition of $\omega$ and $\Lambda(\omega)$, we have
	\begin{eqnarray}
	\Lambda(\omega)\nonumber&&=(1-\beta)\big(J\mathcal{F}(x)d+\mathcal{G}(x+d)-\mathcal{G}(x)\big)+\beta\varepsilon\|d\|^{2}e_{m}+\frac{L\eta}{2}\|d\|^{2}e_{m}\\&&\nonumber=(1-\beta)\big(J\mathcal{F}(x)d+\mathcal{G}(x+d)-\mathcal{G}(x)+\varepsilon\|d\|^{2}e_{m}\big)\nonumber\\&&\hspace{38mm}-2(1-\beta)\varepsilon\|d\|^{2}e_{m}+\varepsilon\|d\|^{2}e_{m}+\frac{L\eta}{2}\|d\|^{2}e_{m}\nonumber\\&&\nonumber\preceq(1-\beta)\big(\max\limits_{i=1,\ldots,m}\big\{\langle \nabla f_{i}(x),d \rangle+g_{i}(x+d)-g_{i}(x)\}e_{m}+\varepsilon\|d\|^{2}e_{m}\big)\nonumber\\&&\hspace{38mm}+2(\beta-1)\varepsilon\|d\|^{2}e_{m}+\varepsilon\|d\|^{2}e_{m}+\frac{L\eta}{2}\|d\|^{2}e_{m}\nonumber\\&&\preceq2(\beta-1)\varepsilon\|d\|^{2}e_{m}+\varepsilon\|d\|^{2}e_{m}+\frac{L\eta}{2}\|d\|^{2}e_{m}\nonumber\\&&
    =\Big((2\beta-1)\varepsilon+\frac{L\eta}{2}\Big)\|d\|^{2}e_{m},\nonumber
	\end{eqnarray}
 	where the last inequality holds because $d$ is an $\varepsilon$-normalized descent direction  of~$\mathcal{V}$ at~$x$.
	Now assume that $(2\beta-1)\varepsilon+\frac{L\eta}{2}\leq 0$, or in other words, $0<\eta\leq \frac{2(1-2\beta)\varepsilon}{L}.$ Then, $\Lambda(\omega)\preceq 0$, and consequently,  $$\mathcal{V}(x+\eta d)-\mathcal{V}(x)\preceq -\eta\beta \omega=\eta\beta\big(J\mathcal{F}(x)d+\mathcal{G}(x+d)-\mathcal{G}(x)-\varepsilon\|d\|^{2}e_{m}\big).$$ Since
	$(1-2\beta)\zeta_{\varepsilon}\leq \frac{2(1-2\beta)\varepsilon}{L}$ from the definition of $\zeta_{\varepsilon}$, the result holds. \qed
\end{proof}	
\begin{lemma}\label{lemS2-03333} Let $\beta\in (0,\frac{1}{2})$, $d\in\mathbb{R}^{n}\setminus\{0\}$, and $x\in \Omega$. If $d$ is an $\varepsilon$-normalized descent direction of $\mathcal{V}$ at $x$, then
	$\mathcal{V}(x+\eta d)\prec \mathcal{V}(x)
	$ for  $0<\eta\leq(1-2\beta)\zeta_{\varepsilon}.$
\end{lemma}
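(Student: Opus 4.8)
The plan is to derive this as an almost immediate corollary of Lemma \ref{lemS2-03}: the point is simply that the right-hand side of \eqref{S2-1} is \emph{strictly} negative in every coordinate, so the $\preceq$ already established upgrades to $\prec$. Concretely, for any $\eta$ with $0<\eta\le(1-2\beta)\zeta_{\varepsilon}=\bar{\eta}$, Lemma \ref{lemS2-03} gives
$$\mathcal{V}(x+\eta d)-\mathcal{V}(x)\preceq \eta\beta\big(J\mathcal{F}(x)d+\mathcal{G}(x+d)-\mathcal{G}(x)-\varepsilon\|d\|^{2}e_{m}\big),$$
so it suffices to show the vector on the right is in $-\mathbb{R}^{m}_{++}$.

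The key step is to bound each component of $J\mathcal{F}(x)d+\mathcal{G}(x+d)-\mathcal{G}(x)-\varepsilon\|d\|^{2}e_{m}$ using the definition of an $\varepsilon$-normalized descent direction. For each $i\in\{1,\dots,m\}$ the $i$-th component equals $\langle\nabla f_{i}(x),d\rangle+g_{i}(x+d)-g_{i}(x)-\varepsilon\|d\|^{2}$, and since
$$\langle\nabla f_{i}(x),d\rangle+g_{i}(x+d)-g_{i}(x)\le\max_{j=1,\dots,m}\big\{\langle\nabla f_{j}(x),d\rangle+g_{j}(x+d)-g_{j}(x)\big\}\le-\varepsilon\|d\|^{2},$$
that component is at most $-2\varepsilon\|d\|^{2}$. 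Because $d\neq0$ and $\varepsilon>0$, we have $-2\varepsilon\|d\|^{2}<0$, hence $J\mathcal{F}(x)d+\mathcal{G}(x+d)-\mathcal{G}(x)-\varepsilon\|d\|^{2}e_{m}\preceq-2\varepsilon\|d\|^{2}e_{m}\prec0$.

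Finally, since $\beta\in(0,\tfrac12)$ and $\eta>0$ we have $\eta\beta>0$, so $\eta\beta\big(J\mathcal{F}(x)d+\mathcal{G}(x+d)-\mathcal{G}(x)-\varepsilon\|d\|^{2}e_{m}\big)\prec0$; combining this with the inequality $\mathcal{V}(x+\eta d)-\mathcal{V}(x)\preceq\eta\beta(\cdots)$ and the elementary fact that $a\preceq b$ and $b\prec0$ imply $a\prec0$ (write $-a=(-b)+(b-a)\in\mathbb{R}^{m}_{++}+\mathbb{R}^{m}_{+}\subseteq\mathbb{R}^{m}_{++}$), we conclude $\mathcal{V}(x+\eta d)\prec\mathcal{V}(x)$. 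There is no real obstacle here; the only point requiring a moment's care is this last order-preservation argument, and making sure the $-\varepsilon\|d\|^{2}$ term and the $\varepsilon\|d\|^{2}$ coming from the descent-direction inequality add up (rather than cancel) to yield the strict $-2\varepsilon\|d\|^{2}$ bound.
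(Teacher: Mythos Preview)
Your proof is correct and follows essentially the same approach as the paper: apply Lemma~\ref{lemS2-03}, bound each coordinate of $J\mathcal{F}(x)d+\mathcal{G}(x+d)-\mathcal{G}(x)$ by the maximum (which is $\le-\varepsilon\|d\|^{2}$), obtain the $-2\varepsilon\|d\|^{2}$ strict bound, and conclude. The paper's argument is written more tersely, but the content is the same; your explicit treatment of the final order-preservation step is a nice touch.
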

\begin{proof} Since  $d$ is an $\varepsilon$-normalized descent direction of $\mathcal{V}$ at a point $x$, we have $\max\limits_{i=1,\ldots,m}\big\{\langle \nabla f_{i}(x),d \rangle+g_{i}(x+d)-g_{i}(x)\}+\varepsilon\|d\|^{2}\leq 0$. Therefore,  using Lemma \ref{lemS2-03} and noting that $d\neq0$ and $\beta,\varepsilon>0$, we get
	\begin{eqnarray}
	\mathcal{V}(x+\eta d)-\mathcal{V}(x)\nonumber&\preceq& \eta\beta\big(\max\limits_{i=1,\ldots,m}\big\{\langle \nabla f_{i}(x),d \rangle+g_{i}(x+d)-g_{i}(x)\}-\varepsilon\|d\|^{2}\big)e_{m}\nonumber\\&\preceq& -2\eta\beta\varepsilon\|d\|^{2}e_{m}\prec0\nonumber
	\end{eqnarray}	
	for all $\eta$ with $0<\eta\leq\bar{\eta}=(1-2\beta)\zeta_{\varepsilon}$. \qed
\end{proof}	
\begin{remark}\label{linesearch} Let  $d\in\mathbb{R}^{n}\setminus\{0\}$, $x\in \Omega$, and $d$ be an $\varepsilon$-normalized descent direction of $\mathcal{V}$ at $x$. Using Lemma~\ref{lemS2-03}, for each $\beta\in (0,\frac{1}{2})$, (\ref{S2-1}) holds for all $\eta$ where $0<\eta\leq (1-2\beta)\zeta_{\varepsilon}$. But, $(1-2\beta)\zeta_{\varepsilon}\leq(1-2\beta)<1$, and thus, there exists $j_{0}\in \mathbb{N}$ such that  $(1-2\beta)^{j}\leq (1-2\beta)\zeta_{\varepsilon}$ for all $j\in\{j_{0},j_{0}+1,j_{0}+2,j_{0}+3,\ldots\}$. 
	Hence, for each $\beta\in (0,\frac{1}{2})$, there exists $j_{0}\in \mathbb{N}$ such that (\ref{S2-1}) holds when $\eta=(1-2\beta)^{j}$ for all $j\in\{j_{0},j_{0}+1,j_{0}+2,j_{0}+3,\ldots\}$. Let $j^{*}$ be the smallest index $j_{0}\in \mathbb{N}$ such that
	\begin{eqnarray}\label{rakpmeli}
	\nonumber&&\mathcal{V}(x+(1-2\beta)^{j} d)-\mathcal{V}(x)\nonumber\\&&\hspace{10mm}\preceq(1-2\beta)^{j}\beta \big(J\mathcal{F}(x)d+\mathcal{G}(x+d)-\mathcal{G}(x)-\varepsilon\|d\|^{2}e_{m}\big)
	\end{eqnarray}
	for all $j\in\{j_{0},j_{0}+1,j_{0}+2,j_{0}+3,\ldots\}$. Note that, in view of Lemma \ref{lemS2-03},  if $2\varepsilon\geq L$, then $j^{*}=1$. The above inequality is an Armijo-type condition and will be used in our proposed algorithm. 
\end{remark}\vspace{2mm}	

\begin{lemma}\label{lemS2-04} Let $L>2\varepsilon>0$, $\beta\in(0,\frac{1}{2})$,  $d$ be an $\varepsilon$-normalized descent direction of $\mathcal{V}$ at $x$, and $j^{*}$ denotes the smallest positive integer such that (\ref{rakpmeli}) holds for $x$, $d$, $\varepsilon$, and~$\beta$.
	Then, we have $$0<\frac{2\varepsilon(1-2\beta)^{2}}{L}<(1-2\beta)^{j^{*}},$$
	that is, the estimate
	$2\varepsilon(1-2\beta)^{2-j^{*}}< L$ holds.
\end{lemma}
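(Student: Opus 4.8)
The plan is to reduce the claim to the sharp sufficient-decrease threshold already isolated in the proof of Lemma~\ref{lemS2-03}. Since $L>2\varepsilon$, by definition $\zeta_{\varepsilon}=\frac{2\varepsilon}{L}$, so the admissible-stepsize bound appearing there is $\bar{\eta}=(1-2\beta)\zeta_{\varepsilon}=\frac{2\varepsilon(1-2\beta)}{L}$; more precisely, Lemma~\ref{lemS2-03} together with Remark~\ref{linesearch} shows that the line-search inequality (\ref{rakpmeli}) is satisfied at an index $j$ whenever $(1-2\beta)^{j}\leq\bar{\eta}$. First I would dispose of the left-hand inequality $0<\frac{2\varepsilon(1-2\beta)^{2}}{L}$, which is immediate from $\varepsilon,L>0$ and $1-2\beta>0$, and record that the two displayed estimates are algebraically equivalent: multiplying by $L$ and dividing by $(1-2\beta)^{j^{*}}>0$ turns $\frac{2\varepsilon(1-2\beta)^{2}}{L}<(1-2\beta)^{j^{*}}$ into $2\varepsilon(1-2\beta)^{2-j^{*}}<L$. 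Hence it suffices to prove $(1-2\beta)^{j^{*}}>\frac{2\varepsilon(1-2\beta)^{2}}{L}$.

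The core observation is the contrapositive of the fact just recalled: if (\ref{rakpmeli}) fails at an index $j$, then necessarily $(1-2\beta)^{j}>\bar{\eta}=\frac{2\varepsilon(1-2\beta)}{L}$. I would then split on the value of $j^{*}$. If $j^{*}\geq 2$, minimality of $j^{*}$ forces (\ref{rakpmeli}) to fail at the index $j^{*}-1\geq 1$, so that $(1-2\beta)^{\,j^{*}-1}>\frac{2\varepsilon(1-2\beta)}{L}$; multiplying through by $1-2\beta>0$ yields $(1-2\beta)^{j^{*}}>\frac{2\varepsilon(1-2\beta)^{2}}{L}$, which is exactly the claim. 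If instead $j^{*}=1$, then $(1-2\beta)^{j^{*}}=1-2\beta$ and, using $\frac{2\varepsilon}{L}<1$ and $0<1-2\beta<1$, one gets $\frac{2\varepsilon(1-2\beta)^{2}}{L}<(1-2\beta)^{2}<1-2\beta=(1-2\beta)^{j^{*}}$.

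I do not anticipate a genuine obstacle; the two points requiring care are (i) invoking the sharp threshold $\frac{2\varepsilon(1-2\beta)}{L}$ rather than the a priori looser $(1-2\beta)\zeta_{\varepsilon}$, which is harmless here because the two coincide when $L>2\varepsilon$, and (ii) not overlooking the boundary case $j^{*}=1$, which can occur even under $L>2\varepsilon$ if $\mathcal{V}$ happens to decrease faster than the descent lemma (\ref{descthey}) predicts, and which must therefore be settled by the direct estimate above rather than by the minimality argument.
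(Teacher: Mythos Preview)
Your proposal is correct and follows essentially the same approach as the paper: for $j^{*}>1$ you use that failure of (\ref{rakpmeli}) at $j^{*}-1$ forces $(1-2\beta)^{j^{*}-1}>\frac{2\varepsilon(1-2\beta)}{L}$ via the contrapositive of Lemma~\ref{lemS2-03}, then multiply by $1-2\beta$, while for $j^{*}=1$ you argue directly from $2\varepsilon/L<1$. The paper's proof is identical in structure but terser, dismissing the $j^{*}=1$ case as ``straightforward'' without writing out the estimate you supply.
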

\begin{proof}If $j^{*}=1$, then it is straightforward. Let $j^{*}>1$. Now using the definition of $j^{*}$ it holds that 
	$$(1-2\beta)^{j^{*}-1}>(1-2\beta)\zeta_{\varepsilon}=\frac{2(1-2\beta)\varepsilon}{L}.$$ This implies that $(1-2\beta)^{j^{*}}>\frac{2(1-2\beta)^{2}\varepsilon}{L}.$ \qed
\end{proof}
\section{The Proposed Algorithm and Its Properties}\label{Sec3}
In this section, we will introduce an algorithm that combines the generalized conditional gradient, adaptive descent search direction, and Armijo line search to solve the multiobjective optimization problem~(\ref{prob}) satisfying Assumption~\ref{AA-111}.  We also present important properties of the proposed method which will be useful for establishing its convergence analysis.

Let us now introduce the function $\psi\colon\Omega\times\Omega\rightarrow\mathbb{R}\cup\{+\infty\}$  given by
\begin{eqnarray} \label{S3-001} \psi(x,y):=\max\limits_{i=1,\ldots,m}\{\langle\nabla f_{i}(x),y-x\rangle +g_{i}(y)-g_{i}(x)\},
\end{eqnarray} 
and $\theta\colon\Omega\rightarrow\mathbb{R}\cup\{-\infty\}$ defined by
\begin{eqnarray}\label{S3-002}
\theta(x):=\min_{y\in\Omega}\psi(x,y).
\end{eqnarray}
Also, the merit function $u_{0}\colon\Omega\rightarrow\mathbb{R}\cup\{+\infty\}$ defined in \cite{10aabbcc} is given by
\begin{eqnarray}\label{merit}
	u_{0}(x):=\sup_{y\in\Omega}\min_{i=1,\ldots,m}\big(\mathcal{V}_{i}(x)-\mathcal{V}_{i}(y)\big).\end{eqnarray}
\begin{lemma}\label{LmS3-02}
	Let $\psi$, $\theta$, and $u_{0}$ be defined by (\ref{S3-001}),  (\ref{S3-002}), and (\ref{merit}), respectively. Then the following statements hold.
	\begin{description}
	\item[(a).] $J\mathcal{F}(x)(y-x)+\mathcal{G}(y)-\mathcal{G}(x)\preceq\psi(x,y)e_{m}$ and $\theta(x)\leq\psi(x,y)$ for all $x,y\in\Omega$.
		\item[(b).]$\theta(x)\leq 0$ for all $x\in\Omega$. Moreover, $x\in\Omega$ is a Pareto critical point of (\ref{prob}) if and only if $\theta(x)=0$.
		
	\item[(c).]	$u_{0}(x)\geq 0$ for all $x\in\Omega$. Moreover, $x\in\Omega$ is weakly Pareto optimal
	of (\ref{prob}) if and only if $u_{0}(x)=0$.

 \item[(d).] For $\sigma\in [0,1]$ and $x,z\in \Omega$, $$\psi(x,x+\sigma (z-x))\leq\sigma \psi(x,z).$$
	\end{description}
\end{lemma}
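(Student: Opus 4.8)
The plan is to handle the three items separately: parts~(a) and~(c) are direct unwindings of the definitions~\eqref{S3-001}, \eqref{S3-002}, \eqref{merit}, while part~(b) is the one step with real content, since it has to connect the surrogate functions $\psi,\theta$ with the directional-derivative criticality condition~\eqref{prob33}. For part~(a), the first inequality is a componentwise comparison: the $i$-th entry of $J\mathcal{F}(x)y+\mathcal{G}(x+y)-\mathcal{G}(x)$ is exactly $\langle\nabla f_{i}(x),y\rangle+g_{i}(x+y)-g_{i}(x)$, which is no larger than the maximum over $j$ of the same quantity, namely $\psi(x,y)$; hence the whole vector is $\preceq\psi(x,y)e_{m}$. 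The bound $\theta(x)\le\psi(x,y)$ then follows at once from~\eqref{S3-002}, since $\theta(x)=\min_{y'\in\Omega}\psi(x,y')\le\psi(x,y)$ for any fixed $y\in\Omega$ (and is trivial when $\theta(x)=-\infty$), Assumption~\ref{A-00} being used only to guarantee that this minimum is meaningful.

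For part~(b), the bound $\theta(x)\le 0$ comes from evaluating $\psi(x,\cdot)$ at the zero step: $\psi(x,0)=\max_{i}\{g_{i}(x)-g_{i}(x)\}=0$, so $\theta(x)\le\psi(x,0)=0$. For the equivalence, first assume $\theta(x)=0$ and fix a direction $d$; by definition of $\theta$ one has $\psi(x,td)\ge 0$ for the admissible small steps $td$, and dividing by $t>0$ gives $\max_{i}\big\{\langle\nabla f_{i}(x),d\rangle+\frac{g_{i}(x+td)-g_{i}(x)}{t}\big\}\ge 0$. By the monotonicity of the difference quotient recalled in Section~\ref{Sec2}, each bracket decreases to $\langle\nabla f_{i}(x),d\rangle+g_{i}'(x;d)=\mathcal{V}_{i}'(x;d)$ as $t\downarrow 0$, and, the maximum being over finitely many indices, we may pass to the limit and obtain $\max_{i}\mathcal{V}_{i}'(x;d)\ge 0$; since $d$ is arbitrary, \eqref{prob33} holds and $x$ is Pareto critical. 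Conversely, if $x$ is Pareto critical, then for any $y\in\Omega$, setting $d:=y$, the inequality $g_{i}(x+d)-g_{i}(x)\ge g_{i}'(x;d)$ from Section~\ref{Sec2} gives $\langle\nabla f_{i}(x),d\rangle+g_{i}(x+d)-g_{i}(x)\ge\mathcal{V}_{i}'(x;d)$ for each $i$, so $\psi(x,y)\ge\max_{i}\mathcal{V}_{i}'(x;d)\ge 0$ by~\eqref{prob33}; taking the minimum over $y\in\Omega$ yields $\theta(x)\ge 0$, and combined with $\theta(x)\le 0$ we conclude $\theta(x)=0$.

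For part~(c), evaluating the supremum in~\eqref{merit} at $y=x$ gives $\min_{i}\big(\mathcal{V}_{i}(x)-\mathcal{V}_{i}(x)\big)=0$, so $u_{0}(x)\ge 0$. Since $u_{0}(x)\ge 0$ always, $u_{0}(x)=0$ is equivalent to $\sup_{y\in\Omega}\min_{i}(\mathcal{V}_{i}(x)-\mathcal{V}_{i}(y))\le 0$, that is, to the statement that for every $y\in\Omega$ there is an index $i$ with $\mathcal{V}_{i}(y)\ge\mathcal{V}_{i}(x)$; this says precisely that no $y\in\Omega$ satisfies $\mathcal{V}(y)\prec\mathcal{V}(x)$, which is the definition of $x$ being a weakly Pareto optimal point of~\eqref{prob} (this being also recorded in~\cite{10aabbcc}).

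The only delicate point I expect is the equivalence in~(b): one must pass between the ``finite-step'' surrogate $\psi$ and the ``infinitesimal'' directional derivatives, taking care of the order in which the limit $t\downarrow 0$ and the maximum over $i$ are performed, and of the admissibility, for the minimization defining $\theta$, of the steps used in the estimate (which rests on convexity of $\Omega$ and on $\Omega\subset\mathcal{D}$); items~(a) and~(c) are entirely routine.
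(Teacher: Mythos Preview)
Your proposal is correct and considerably more self-contained than the paper's own proof, which simply declares (a) ``straightforward'' and defers (b) and (c) to \cite[Theorems~3.9 and~3.1]{10aabbcc} respectively. Your direct arguments---the componentwise comparison for (a), the evaluation at $y=x$ for (c), and the $t\downarrow 0$ limit passage linking $\psi$ to the directional derivatives $\mathcal{V}_i'(x;d)$ for (b)---are the standard ones, and they are exactly what a reader without access to the cited merit-function reference would need. The one point worth tightening is the admissibility issue you already flag: the step $\theta(x)\le\psi(x,0)=0$ and the use of $\psi(x,td)\ge 0$ both require that the relevant steps belong to the set over which $\theta$ is minimized. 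This is trivial when $\Omega=\mathbb{R}^n$; in the constrained case the paper's own later usage (see the identification $\inf_{y\in\Omega}\psi(x_k,y-x_k)=\theta(x_k)$ in the proof of Theorem~\ref{thm22222}) indicates the intended domain of minimization is $\{y:x+y\in\Omega\}$, under which $y=0$ is admissible because $x\in\Omega$, and convexity of $\Omega$ supplies the short segments $td$ for $d=z-x$, $z\in\Omega$. With that reading your argument goes through verbatim, so no real gap remains.
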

\begin{proof}
	The proof of $(a)$ is straightforward. The result $(b)$  follows from \cite[Theorem 3.9]{10aabbcc} since its proof holds even if $\ell=0$ for the merit function $w_{\ell}(x)$ defined in there. Finally, $(c)$ is immediate from \cite[Theorem~3.1]{10aabbcc}.
 
From the definition of \eqref{S3-001} and $g_{i}(x+\sigma (z-x))\leq (1-\sigma)g_{i}(x)+\sigma g_{i}(z)$ (by convexity of $g_i$), we have 
 	\begin{eqnarray}
 	\psi(x,x+\sigma (z-x))\nonumber&\leq&\max\limits_{i=1,\ldots,m}\{\sigma\langle\nabla f_{i}(x), z-x\rangle +\sigma g_{i}(z)-\sigma g_{i}(x)\}\nonumber\\&=&\sigma \psi(x,z)\nonumber
 	\end{eqnarray}	
 	for all $x,z\in \Omega$ and $\sigma\in [0,1]$.\qed
 \end{proof}

We are now ready to state our line search-based adaptive generalized conditional gradient method ({\bf A-GCG}).

\par\noindent\rule{\textwidth}{1pt}
\begin{alg}\label{algorithm1}
{ \bf Adaptive  Generalized Conditional Gradient ({\bf A-GCG})} \par\noindent\rule{\textwidth}{1pt} Let  $\sigma_{\min}$, $\alpha_{\min}$, $\varepsilon_{0}$, and $\beta$ be real numbers with $0<\sigma_{\min}\leq 1$, $0<\alpha_{\min}$, $0<2\varepsilon_{0}<L$, and $0<\beta<\frac{1}{2}$. Choose $x_{0}\in\Omega$ and the sequences of real numbers $\{\sigma_{k}\}_{k=0}^{\infty}$ and $\{\alpha_{k}\}_{k=0}^{\infty}$ with $\sigma_{\min}\leq\sigma_{k}\leq 1$ and $\alpha_{\min}\leq\alpha_{k}$ for all $k$.  
	\begin{description}
		\item [\textbf{\underline {STEP 1.}}] Choose $y_{k}\in\Omega$ such that
		\begin{eqnarray}
		\psi(x_{k},y_{k})\leq\max\big\{\sigma_{k}\theta(x_{k}),-\alpha_{k}\big\}.\nonumber
		\end{eqnarray}
		
		If $\psi(x_{k},y_{k}):=0$, then stop. Otherwise, go to \textbf{STEP 2}.	
		\item [\textbf{\mbox{\underline {STEP 2.}} }] {\bf Adaptive  descent search direction technique:} Compute
		\[d_{k}:=
		\begin{cases}
		y_{k}-x_{k}, \hspace{12mm}\mbox{if}\hspace{1.5mm} \psi(x_{k},y_{k})\leq -\varepsilon_{k}\|y_{k}-x_{k}\|^{2}, \\ 
		\frac{t_{k}(y_{k}-x_{k})}{\varepsilon_{k}\|y_{k}-x_{k}\|^{2}}, \hspace{7mm}\mbox{otherwise},
		\end{cases}
		\]	
		where $t_{k}:=\big|\psi(x_{k},y_{k})\big|.$
		\item [\textbf{\underline {STEP 3.}}] {\bf Armijo-type line search technique:} Find $l_{k}$ where
		it is the smallest positive integer $j$ such that 
		\begin{eqnarray}\label{aaa22}
		\nonumber&&\mathcal{V}(x_{k}+(1-2\beta)^{j} d_{k})-\mathcal{V}(x_{k})\nonumber\\&&\hspace{3mm}\preceq (1-2\beta)^{j}\beta (J\mathcal{F}(x_{k})d_{k}+\mathcal{G}(x_{k}+d_{k})-\mathcal{G}(x_{k})-\varepsilon_{k}\|d_{k}\|^{2}e_{m}).
		\end{eqnarray}
		Set $\lambda_{k}:=(1-2\beta)^{l_{k}}$.
		
		\item [\textbf{\underline {STEP 4.}}] Compute $x_{k+1}:=x_{k}+\lambda_{k}d_{k}$ and update $\varepsilon_{k+1}:=\varepsilon_{k}(1-2\beta)^{1-l_{k}}$.\\ Set $k:=k+1$ and go to \textbf{STEP 1}.
	\end{description}	
\end{alg}
\par\noindent\rule{\textwidth}{1pt}
\vspace{0.3mm}

The difference between Algorithm \ref{algorithm1} ({\bf A-GCG}) and other multiobjective methods found in the literature (see for e.g., \cite{2,9,12,4aa} and references therein) is substantial because every iterative step in our algorithm employs both the adaptive step size and the Armijo line search technique, as well as relaxed parameters that can be adjusted and controlled. 
  \begin{remark} \label{remark2}
  \begin{description}
      \item[(a).] The computation to choose $y_{k}$ in {\bf STEP 1} of {\bf A-GCG} is technical and basically involves solving the subproblem (\ref{S3-002}) with $x=x_{k}$. Let $z_{k}\in\Omega$ with
$\psi(x_{k},z_{k})=\theta(x_{k})$. Now since $\theta(x_{k})\leq 0$ (by Lemma \ref{LmS3-02} $(b)$) and $0<\sigma_{\min}\leq\sigma_{k}\leq 1$, we have
$$\psi(x_{k},z_{k})=\theta(x_{k})\leq\sigma_{k}\theta(x_{k})\leq\max\big\{\sigma_{k}\theta(x_{k}),-\alpha_{k}\big\}.$$ 
      Moreover, 
for $\sigma>0$ with $\sigma_{\min}\leq\sigma_{k}\leq\sigma\leq 1$, the point $q_{k}=x_{k}+\sigma(z_{k}-x_{k})$  satisfies  
      $$
 \psi(x_{k},q_{k})\leq \sigma\psi(x_{k},z_{k})\leq \sigma_{k}\psi(x_{k},z_{k})=\sigma_{k}\theta(x_{k})\leq\max\big\{\sigma_{k}\theta(x_{k}),-\alpha_{k}\big\}.$$
 where the first inequality is obtained by Lemma \ref{LmS3-02} $(d)$.
 Hence, $y_k=z_{k}$ and $y_k=q_{k}$ satisfies {\bf STEP~1}, and thus
      $$\big\{z\in\Omega:\psi(x_{k},z)\leq\max\big\{\sigma_{k}\theta(x_{k}),-\alpha_{k}\big\}\big\}\neq\emptyset.$$ Note also that, if $\sigma_{k}\theta(x_{k})\leq -\alpha_{k}$, then any $y_k\in\Omega$ (if exists) with $\sigma_{k}\theta(x_{k})\leq\psi(x_k,y_k)\leq -\alpha_{k}$ also satisfies {\bf STEP~1}. Such $y_k$ possibly exists for $\sigma_{k}=1$ (or $\sigma_{k}$ very large close to 1) and for $\alpha_{k}=\alpha_{\min}$ (or $\alpha_{k}$ is very small close to $\alpha_{\min}$).
 Therefore, {\bf STEP~1} of {\bf A-GCG} is well-defined. 
\item[(b).] The parameters $\alpha_{k}$ and $\sigma_{k}$ determine the geometry and the possible choice of $y_k$. For each iteration $k$, $\theta(x_{k})$ and $z_k$ with $\psi(x_{k},z_{k})=\theta(x_{k})$ is prior known before utilizing the parameters $\alpha_{k}$ and $\sigma_{k}$ to generate $y_k$. In Algorithm \ref{algorithm1}, $\{\alpha_{k}\}_{k=0}^{\infty}$ and $\{\sigma_{k}\}_{k=0}^{\infty}$ can be taken as not per-determined sequences and thus, for each $k$, the values of $\sigma_{k}$ ($\sigma_{\min}\leq\sigma_{k}\leq 1$) and $\alpha_{k}$ ($\alpha_{\min}\leq\alpha_{k}$) can be adjusted based on the obtained value of $\theta(x_{k})$ and the point $z_k$  so that $y_k$ and $\theta(x_{k})$ satisfy a certain property (may not be any property), for e.g., to satisfy  $\sigma_{k}\theta(x_{k})\geq -\alpha_{k}$ or $\sigma_{k}\theta(x_{k})\leq -\alpha_{k}$. 
\item[(c).] The stopping criteria of {\bf STEP 1} is equivalent to the Pareto stationarity, from the definition of $\theta$ and Lemma \ref{LmS3-02} $(b)$ (see details in Proposition stated next to this). 
\item[(d).] In {\bf STEP 2}, if $d_{k}$ is an $\varepsilon_{k}$-normalized descent direction at $x_{k}$, we take it as the search direction. Otherwise, we construct an $\varepsilon_{k}$-normalized descent direction as shown in Lemma \ref{lemS2-02aa}. Finally, in {\bf STEP 3} we have the line search, as discussed in Remark~\ref{linesearch}.
\item[(e).] The adaptive step size that results from {\bf STEP 2} is \[\xi_{k}=
\begin{cases}
1, \hspace{12mm}\mbox{if}\hspace{1.5mm} \psi(x_{k},y_{k})\leq -\varepsilon_{k}\|y_{k}-x_{k}\|^{2}, \\ 
\frac{t_{k}}{\varepsilon_{k}\|y_{k}-x_{k}\|^{2}}, \hspace{4mm}\mbox{otherwise},
\end{cases}
\]	
where the descent direction $d_{k}$ at $x_{k}$ is given by
$d_{k}=\xi_{k}(y_{k}-x_{k})$. Hence,
\begin{equation}
    \label{eq:iterate}
    x_{k+1}=x_{k}+\lambda_{k}\xi_{k}(y_{k}-x_{k}),
\end{equation}
which means that the iterate is updated with direction $y_{k}-x_{k}$ and step size $\lambda_k \xi_{k}$. \vspace{2mm} 
  \end{description}
  \end{remark}
\begin{proposition}
 \label{LmS-04}
Let $\{x_{k}\}_{k=0}^{\infty}$ and $\{y_{k}\}_{k=0}^{\infty}$ be sequences generated by Algorithm \ref{algorithm1}. Then, the following statements hold.	\begin{description}
		\item[(a).] 
		$\psi(x_{k},y_{k})\leq0$ for all $k$.
		\item[(b).]  $x_{k}$ is Pareto stationary of~(\ref{prob}) if and if 	$\psi(x_{k},y_{k})=0.$ 
	\end{description}  
\end{proposition} 
\begin{proof} 
	Since $\theta(x)\leq 0$ for all $x\in \Omega$ from Lemma \ref{LmS3-02} $(b)$ and the restriction that $\alpha_{k}>0$ for all $k$, we have $
	\psi(x_{k},y_{k})\leq\max\big\{\sigma_{k}\theta(x_{k}),-\alpha_{k}\big\}\leq 0$ for all $k$.	
	This proves $(a).$\\ 	Suppose $x_{k}$ is a Pareto stationary point of (\ref{prob}). Lemma \ref{LmS3-02} $(b)$ implies $\theta(x_{k})=0$, and thus
	$
	0=\theta(x_{k})=\min\limits_{y\in\Omega}\psi(x_{k},y)\leq\psi(x_{k},z)$ for all  $z\in\Omega$.
	Taking $z=y_{k}$, we have $
	0\leq\psi(x_{k},y_{k})$, and combining it with $(a)$ gives
 $\psi(x_{k},y_{k})=0.
	$\\
	Suppose $\psi(x_{k},y_{k})=0.$ From Lemma \ref{LmS3-02} $(b)$, it therefore suffices to show that $\theta(x_{k})\geq 0$.
	Now as a result of  $0=\psi(x_{k},y_{k})$, $0<\sigma_{k}\leq 1$, and $-\alpha_{k}<0$, we find
	\begin{eqnarray}
 \label{theta0}
	0=\psi(x_{k},y_{k})\leq\max\big\{\sigma_{k}\theta(x_{k}),-\alpha_{k}\big\}=\sigma_{k}\theta(x_{k}).
	\end{eqnarray}	
	Since $\sigma_{k}>0$, \eqref{theta0} implies $0\leq \theta(x_{k})$. Hence, in view of Lemma \ref{LmS3-02} $(b)$, we have $\theta(x_{k})=0$, which implies that $x_{k}$ is Pareto stationary of (\ref{prob}). \qed
\end{proof}
From the above proposition, we also conclude that $\{x_k\} \in \Omega$, that is, the algorithm generates feasible iterates with the form~\eqref{eq:iterate}. In fact, recall that either $\xi_k = 1$ or $\xi_k = |\psi(x_{k},y_{k})| / (\varepsilon_k \|y_{k}-x_{k}\|^{2})$. In the latter case, since $|\psi(x_{k},y_{k})| = - \psi(x_{k},y_{k}) < \varepsilon_{k}\|y_{k}-x_{k}\|^{2}$, we get $\xi_k < 1$. Since we also have $\lambda_k \in (0,1]$, the convexity of $\Omega$ shows that $x_{k+1} = (1 - \lambda_k \xi_k) x_k + \lambda_k \xi_k y_k \in \Omega$ when $x_k \in \Omega$. 
Moreover, Proposition \ref{LmS-04} $(b)$ allows us to see that $\psi(x_{k},y_{k})=0$ is a sufficient stopping rule for obtaining the required solution point of the multiobjective optimization problem~(\ref{prob}). Hence, all the next discussions are considered under the nontrivial case of the proposed algorithm where $\psi(x_{k},y_{k})\neq0$ for all $k$.

\begin{lemma} \label{serakeb}
Let $\{d_{k}\}_{k=0}^{\infty}$ and $\{\varepsilon_{k}\}_{k=0}^{\infty}$ be the sequences generated by Algorithm~\ref{algorithm1}. Then, the following statements hold.	\begin{description}
		\item[(a).]  $d_{k}\neq 0$ and $d_{k}$ is an $\varepsilon_{k}$-normalized descent direction of $\mathcal{V}$ at $x_{k}$. 
		
		\item[(b).]
		$\varepsilon_{k}\leq \varepsilon_{k+1}\leq\bar{\varepsilon}$ for all $k$, where $\bar{\varepsilon}:=\frac{L}{2(1-2\beta)}$.
	\end{description} 	
\end{lemma}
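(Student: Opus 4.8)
The plan is to treat the two assertions separately, invoking the $\varepsilon$-normalized descent direction machinery of Section \ref{Sec22}. For (a), recall that all subsequent discussion is under the nontrivial regime $\psi(x_{k},y_{k})\neq 0$, so Lemma \ref{LmS-04}(a) gives $\psi(x_{k},y_{k})<0$; in particular $y_{k}\neq 0$, since $\psi(x_{k},0)=0$. I would then split along the definition of $d_{k}$ in \textbf{STEP 2}. If $\psi(x_{k},y_{k})\leq-\varepsilon_{k}\|y_{k}\|^{2}$, then $d_{k}=y_{k}\neq 0$ and the branch condition is exactly the inequality $\psi(x_{k},d_{k})+\varepsilon_{k}\|d_{k}\|^{2}\leq 0$ defining an $\varepsilon_{k}$-normalized descent direction at $x_{k}$. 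Otherwise $-\varepsilon_{k}\|y_{k}\|^{2}<\psi(x_{k},y_{k})<0$, so with $t_{k}=|\psi(x_{k},y_{k})|=-\psi(x_{k},y_{k})$ we have $0<t_{k}<\varepsilon_{k}\|y_{k}\|^{2}$ and $\psi(x_{k},y_{k})+t_{k}=0\leq 0$; applying Lemma \ref{lemS2-02aa} with $u=y_{k}$ and $t=t_{k}$ shows that $d_{k}=\frac{t_{k}y_{k}}{\varepsilon_{k}\|y_{k}\|^{2}}$ is an $\varepsilon_{k}$-normalized descent direction, and $d_{k}\neq 0$ because $t_{k}>0$ and $y_{k}\neq 0$. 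In either case, $d_{k}$ being an $\varepsilon_{k}$-normalized descent direction means $\psi(x_{k},d_{k})\leq-\varepsilon_{k}\|d_{k}\|^{2}\leq 0$, which is the ``moreover'' claim.

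For (b), note first that each $\varepsilon_{k}$ is positive (inductively, $\varepsilon_{k+1}=\varepsilon_{k}(1-2\beta)^{1-l_{k}}>0$ since $0<1-2\beta<1$), and that the lower bound $\varepsilon_{k}\leq\varepsilon_{k+1}$ is then immediate: $l_{k}\geq 1$ forces $1-l_{k}\leq 0$, hence $(1-2\beta)^{1-l_{k}}\geq 1$. For the upper bound I would run an induction on $k$ with hypothesis $\varepsilon_{k}\leq\bar{\varepsilon}=\frac{L}{2(1-2\beta)}$; the base case holds because $\varepsilon_{0}<L/2<\bar{\varepsilon}$. In the inductive step, if $2\varepsilon_{k}<L$, then since $d_{k}$ is an $\varepsilon_{k}$-normalized descent direction by (a) and $l_{k}$ is, by construction, the smallest positive integer for which (\ref{aaa22}) — which is precisely (\ref{rakpmeli}) with the data $x_{k},d_{k},\varepsilon_{k},\beta$ — holds, Lemma \ref{lemS2-04} yields $2\varepsilon_{k}(1-2\beta)^{2-l_{k}}<L$; dividing by $1-2\beta>0$ gives $2\varepsilon_{k+1}=2\varepsilon_{k}(1-2\beta)^{1-l_{k}}<\frac{L}{1-2\beta}$, i.e. $\varepsilon_{k+1}<\bar{\varepsilon}$. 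If instead $2\varepsilon_{k}\geq L$, then $\zeta_{\varepsilon_{k}}=1$ and, by Lemma \ref{lemS2-03} (see Remark \ref{linesearch}), the Armijo inequality already holds at $j=1$, so $l_{k}=1$ and $\varepsilon_{k+1}=\varepsilon_{k}\leq\bar{\varepsilon}$ by the induction hypothesis. This closes the induction, and combining it with the lower bound gives $\varepsilon_{k}\leq\varepsilon_{k+1}\leq\bar{\varepsilon}$ for all $k$.

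The only delicate point I anticipate is the upper bound in (b): Lemma \ref{lemS2-04} requires $2\varepsilon_{k}<L$, so it cannot be applied blindly at every iteration, and one must observe that once $\varepsilon_{k}$ has reached $L/2$ the line search terminates at $l_{k}=1$ and $\varepsilon_{k}$ stops growing — this, together with the induction hypothesis, is what prevents $\varepsilon_{k+1}$ from ever exceeding $\bar{\varepsilon}$. Everything else reduces to unwinding the definitions and applying the already-established Lemmas \ref{lemS2-02aa}, \ref{lemS2-03}, and \ref{lemS2-04}.
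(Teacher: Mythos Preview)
Your proof is correct and follows the paper's approach closely. Part (a) is identical (you invoke Lemma~\ref{lemS2-02aa} directly where the paper cites its corollary Lemma~\ref{lemS2-02}), and for (b) you arrange the same two ingredients---Lemma~\ref{lemS2-04} when $2\varepsilon_{k}<L$, and $l_{k}=1$ via Remark~\ref{linesearch} when $2\varepsilon_{k}\geq L$---as a clean induction on $k$ rather than the paper's global case split on whether the threshold $2\varepsilon_{k}\geq L$ is ever reached.
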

\begin{proof} Let us prove $(a)$. The fact that
	$\psi(x_{k},y_{k})\neq0$ implies  $\psi(x_{k},y_{k})<0$ from Proposition~\ref{LmS-04}. Hence, from~\eqref{S3-001} and Step~2 of the algorithm, $y_{k}-x_{k}\neq0$ and $(t_{k}(y_{k}-x_{k}))/(\varepsilon_{k}\|y_{k}-x_{k}\|^{2})\neq0$ which show that $d_{k}\neq0$. Now, recall that if $y_{k}-x_{k}$ is an $\varepsilon_{k}$-normalized descent direction of $\mathcal{V}$ at $x_{k}$, then $d_{k}=y_{k}-x_{k}$. Otherwise, $d_{k}=(|\psi(x_{k},y_{k})|(y_{k}-x_{k}))/(\varepsilon_{k}\|y_{k}-x_{k}\|^{2})$. Hence, by Lemma \ref{lemS2-02} and~\eqref{S3-001} we see that $d_{k}$ is an $\varepsilon_{k}$-normalized descent direction of $\mathcal{V}$ at $x_{k}$, i.e., $\psi(x_{k},x_{k}+d_{k})+\varepsilon_{k}\|d_{k}\|^{2}\leq 0$. \\
    \indent To prove $(b)$, we first note that $\varepsilon_{k}\leq\varepsilon_{k}(1-2\beta)^{1-l_{k}}=\varepsilon_{k+1}$ for all $k$.
	Now, suppose that $2\varepsilon_{k}<L$ for all $k$. From Lemma \ref{lemS2-04}, we see that $(1-2\beta)^{l_{k}}>(2\varepsilon_{k}(1-2\beta)^{2})/L$. Hence, $\varepsilon_{k+1}=\varepsilon_{k}(1-2\beta)^{1-l_{k}}<L/(2(1-2\beta))$ for all $k$. Assume that $2\varepsilon_{k}\geq L$ for some~$k$. Thus from Remark \ref{linesearch}, we have $l_{k}=1$. Hence, $\varepsilon_{k+1}=\varepsilon_{k}(1-2\beta)^{1-l_{k}}=\varepsilon_{k}$ implying that $L\leq 2\varepsilon_{k}=2\varepsilon_{k'}$ for all $k'=k,k+1,k+2,\ldots$. Obviously, $k\geq1$. Let $q$ be the smallest positive integer with the property $2\varepsilon_{q}\geq L$. Thus, $2\varepsilon_{k}<L$ for all $k=1,\ldots,q-1$ and hence  $\varepsilon_{k}<L/(2(1-2\beta))$ for all $k=1,\ldots,q-1$. 
	Due to $2\varepsilon_{q-1}<L$, it follows that $(1-2\beta)^{l_{q-1}}>(2(1-2\beta)^{2}\varepsilon_{q-1})/L$, and hence
	$\varepsilon_{q}=\varepsilon_{q-1}(1-2\beta)^{1-l_{q-1}}<L/(2(1-2\beta))$. Meanwhile, we have $\varepsilon_{k}=\varepsilon_{q}<L/(2(1-2\beta))$ for all $k=q,q+1,q+2,\ldots$. 
	Therefore, this concludes the proof. \qed
\end{proof}
Using Lemma \ref{lemS2-03}, the line search technique of the algorithm is well-defined, in the sense that, the line search procedure in {\bf STEP 3} is always finite, i.e., for each $k\in \mathbb{N}$, there is $l_{k}\in \mathbb{N}$ satisfying (\ref{aaa22}).
\begin{lemma} \label{LmS-044444} Let $\{x_{k}\}_{k=0}^{\infty}$, $\{y_{k}\}_{k=0}^{\infty}$, $\{d_{k}\}_{k=0}^{\infty}$, $\{t_{k}\}_{k=0}^{\infty}$, and $\{\varepsilon_{k}\}_{k=0}^{\infty}$ be the sequences generated by Algorithm \ref{algorithm1}. Then, the
following statements hold.
\begin{description}
\item[(a).]		$\psi(x_{k},x_{k}+d_{k})=\psi(x_{k},y_{k})$ if $y_{k}-x_{k}$ is an $\varepsilon_{k}$-normalized descent direction of $\mathcal{V}$ at $x_{k}$, and $\psi(x_{k},x_{k}+d_{k})\leq\frac{t_{k}}{\varepsilon_{k}\|y_{k}-x_{k}\|^{2}}\psi(x_{k},y_{k})$ otherwise. \item[(b).]	$\psi(x_{k},x_{k}+d_{k})\leq 0$ for all $k$.
  \end{description}
\end{lemma}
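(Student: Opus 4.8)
The plan is to split into the two cases appearing in the definition of $d_k$ in \textbf{STEP 2} of Algorithm \ref{algorithm1}. In the first case, when $y_k$ is an $\varepsilon_k$-normalized descent direction of $\mathcal{V}$ at $x_k$ (equivalently $\psi(x_k,y_k)\le -\varepsilon_k\|y_k\|^2$), we simply have $d_k=y_k$ by construction, so $\psi(x_k,d_k)=\psi(x_k,y_k)$ is immediate and nothing more is needed.

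For the remaining case, $d_k=\dfrac{t_k y_k}{\varepsilon_k\|y_k\|^2}$ with $t_k=|\psi(x_k,y_k)|$, so $d_k=\sigma_k' y_k$ where $\sigma_k':=\dfrac{t_k}{\varepsilon_k\|y_k\|^2}$. First I would record that, since we work in the nontrivial regime $\psi(x_k,y_k)\neq 0$ and $\psi(x_k,y_k)\le 0$ by Lemma \ref{LmS-04}(a), actually $\psi(x_k,y_k)<0$, hence $y_k\neq 0$ and $t_k=-\psi(x_k,y_k)>0$. Next, because this case is precisely the one in which $y_k$ is \emph{not} an $\varepsilon_k$-normalized descent direction, we have $\psi(x_k,y_k)>-\varepsilon_k\|y_k\|^2$, i.e.\ $t_k<\varepsilon_k\|y_k\|^2$; together with $\varepsilon_k>0$ this yields $\sigma_k'\in(0,1)$.

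With $\sigma_k'\in(0,1)$ established, I would apply the scaling inequality \eqref{S3-0033333333} (which uses convexity of each $g_i$) with $\sigma=\sigma_k'$ and $z=y_k$ to conclude
\[
\psi(x_k,d_k)=\psi(x_k,\sigma_k' y_k)\le \sigma_k'\,\psi(x_k,y_k)=\frac{t_k}{\varepsilon_k\|y_k\|^2}\,\psi(x_k,y_k),
\]
which is exactly the claimed bound. Combining the two cases finishes the proof.

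The argument is essentially bookkeeping, so there is no serious obstacle; the only point requiring care is the verification that the multiplier $\sigma_k'=t_k/(\varepsilon_k\|y_k\|^2)$ lies in $(0,1)$ — this is what licenses the use of \eqref{S3-0033333333} — and that one must first note $\psi(x_k,y_k)<0$ so that $t_k=-\psi(x_k,y_k)$ and the sign of the right-hand side is handled correctly.
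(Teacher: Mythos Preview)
Your proof is correct and follows essentially the same approach as the paper: split into the two cases from \textbf{STEP 2}, and in the second case verify that the multiplier $t_k/(\varepsilon_k\|y_k\|^2)$ lies in $(0,1]$ so that the scaling inequality \eqref{S3-0033333333} applies. The paper invokes Lemma \ref{LmS-04} for $\psi(x_k,y_k)<0$ and then \eqref{S3-0033333333} in exactly the same way.
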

\begin{proof} 
	By definition of $d_{k}$ in the algorithm, $d_{k}=y_{k}-x_{k}$ if $y_{k}-x_{k}$ is $\varepsilon_{k}$-normalized descent direction of $\mathcal{V}$ at $x_{k}$. Otherwise, i.e., if $\psi(x_{k},y_{k})> -\varepsilon_{k}\|y_{k}-x_{k}\|^{2}$, we have $d_{k}=\frac{t_{k}(y_{k}-x_{k})}{\varepsilon_{k}\|y_{k}-x_{k}\|^{2}}$ where $t_{k}=\big|\psi(x_{k},y_{k})\big|$. Thus, since $\psi(x_{k},y_{k})<0$ from Proposition \ref{LmS-04}, we get
	$0<t_{k}=\big|\psi(x_{k},y_{k})\big|=-\psi(x_{k},y_{k})<\varepsilon_{k}\|y_{k}-x_{k}\|^{2}$ implying $0<\frac{t_{k}}{\varepsilon_{k}\|y_{k}-x_{k}\|^{2}}\leq1$. Hence, this together with Lemma \ref{LmS3-02} $(d)$ gives $\psi(x_{k},x_{k}+d_{k})\leq\frac{t_{k}}{\varepsilon_{k}\|y_{k}-x_{k}\|^{2}}\psi(x_{k},y_{k})$.

The fact that $\psi(x_{k},y_{k})\leq 0$ for all $k$ (from Proposition \ref{LmS-04}) the result in $(a)$ above gives $\psi(x_{k},x_{k}+d_{k})\leq 0$ for all $k$. \qed
\end{proof}		
\begin{lemma}\label{LmS-05} Define
\begin{equation}  
  \label{eq:gamma}
  \gamma:=\min\left\{\frac{2\varepsilon_{0}(1-2\beta)^{2}}{L},1-2\beta\right\},
\end{equation}
and let $\{x_{k}\}_{k=0}^{\infty}$ and $\{d_{k}\}_{k=0}^{\infty}$ be the sequences generated by Algorithm \ref{algorithm1}. Then, the following statements hold.
	\begin{description}
		\item[(a).]	$\mathcal{V}(x_{k+1})-\mathcal{V}(x_{k})\preceq\gamma\beta \psi(x_{k},x_{k}+d_{k})e_{m}$ for all $k$.
		\item[(b).] $\mathcal{V}(x_{k+1})-\mathcal{V}(x_{k})\preceq-\gamma\beta \varepsilon_{0}\|d_{k}\|^{2}e_{m}$ for all $k$.
		\item[(c).] $\{\mathcal{V}(x_{k})\}_{k=0}^{\infty}$ is a monotonically decreasing sequence.	
	\end{description}	
\end{lemma}
\begin{proof}
	Let $L>2\varepsilon_{k}$. The definition of $x_{k+1}$ and the Armjio-type line search gives
	\begin{eqnarray}\label{ellina1}
	\mathcal{V}(x_{k+1})-\mathcal{V}(x_{k})\nonumber&=&\mathcal{V}(x_{k}+(1-2\beta)^{l_{k}} d_{k})-\mathcal{V}(x_{k})\nonumber\\&\preceq&(1-2\beta)^{l_{k}}\beta (J\mathcal{F}(x_{k})d_{k}+\mathcal{G}(x_{k}+d_{k})-\mathcal{G}(x_{k})-\varepsilon_{k}\|d_{k}\|^{2}e_{m})\nonumber\\&\preceq&(1-2\beta)^{l_{k}}\beta (\psi(x_{k},x_{k}+d_{k})-\varepsilon_{k}\|d_{k}\|^{2})e_{m}\nonumber\\&\preceq&\frac{2\varepsilon_{k}\beta(1-2\beta)^{2}}{L} (\psi(x_{k},x_{k}+d_{k})-\varepsilon_{k}\|d_{k}\|^{2})e_{m}\nonumber\\&\preceq&\frac{2\varepsilon_{0}\beta(1-2\beta)^{2}}{L} (\psi(x_{k},x_{k}+d_{k})-\varepsilon_{k}\|d_{k}\|^{2})e_{m},
	\end{eqnarray}
    where the second inequality holds from~\eqref{S3-001}, the third one follows from 
    Lemma~\ref{lemS2-04}, i.e., $0<(2\varepsilon_{k}(1-2\beta)^{2})/L<(1-2\beta)^{l_{k}},$
	and the fact that $\psi(x_{k},x_{k}+d_{k})-\varepsilon_{k}\|d_{k}\|^{2} \le 0$ (by Lemma~\ref{LmS-044444} $(b)$ and noting $\varepsilon_{k}\|d_{k}\|^{2}\geq 0$ for all $k$), and the last inequality holds because $\varepsilon_{0}\leq\varepsilon_{k}$ for all $k$.\\
	\indent Now, let $L\leq2\varepsilon_{k}$. Thus, from Remark~\ref{linesearch}, $l_{k}=1$. Similarly to~\eqref{ellina1}, we have  
	\begin{equation}\label{ellina2}
	\mathcal{V}(x_{k+1})-\mathcal{V}(x_{k}) \preceq (1-2\beta)\beta \big(\psi(x_{k},x_{k}+d_{k})-\varepsilon_{k}\|d_{k}\|^{2}\big)e_{m}.
	\end{equation} 
	Therefore, since $\psi(x_{k},x_{k}+d_{k})\leq 0$ and $\varepsilon_{k}\|d_{k}\|^{2}\geq 0$ for all $k$, the results $(a)$ and $(b)$ holds straight from (\ref{ellina1}) and (\ref{ellina2}). Finally, from $(a)$ (alternatively $(b)$), we see that $\mathcal{V}(x_{k+1})-\mathcal{V}(x_{k})\preceq 0$ for all $k$, and thus $\{\mathcal{V}(x_{k})\}_{k=0}^{\infty}$ is monotonically decreasing.	\qed
\end{proof}	

Note that from the definition of $d_{k}$, if $\psi(x_{k},y_{k})\leq -\varepsilon_{k}\|y_{k}-x_{k}\|^{2}$, then $\|d_{k}\|=\|y_{k}-x_{k}\|$.  Otherwise, $\|d_{k}\|=\big\|\frac{t_{k}(y_{k}-x_{k})}{\varepsilon_{k}\|y_{k}-x_{k}\|^{2}}\big\|=\frac{t_{k}}{\varepsilon_{k}\|y_{k}-x_{k}\|}\leq\frac{\varepsilon_{k}\|y_{k}-x_{k}\|^{2}}{\varepsilon_{k}\|y_{k}-x_{k}\|}=\|y_{k}-x_{k}\|.$ Therefore, 
\begin{eqnarray}\label{nani29}
\|d_{k}\|\leq\|y_{k}-x_{k}\| \mbox{ for all } k.
\end{eqnarray}
\begin{remark}
\label{remark3333}
Since $\Omega$ is compact and $y_k,x_k\in\Omega$ for all $k$, the sequences $\{x_{k}\}_{k=0}^{\infty}$, $\{y_{k}\}_{k=0}^{\infty}$, and $\{y_{k}-x_{k}\}_{k=0}^{\infty}$ are bounded. Therefore, since $\nabla f_{i}$ is $L_{i}$-Lipschitz continuous for all
	$i=1,\ldots,m$ and $\{x_{k}\}_{k=0}^{\infty}$ is bounded, $\{\nabla f_{i}(x_{k})\}_{k=0}^{\infty}$ is bounded for all
	$i=1,\ldots,m$.  From the boundedness of $\{y_{k}-x_{k}\}_{k=0}^{\infty}$ and $\{\nabla f_{i}(x_{k})\}_{k=0}^{\infty}$, there exist positive real numbers $\delta$ and $\rho_{i}$ ($i=1,\ldots,m$) such that
\begin{eqnarray}\label{eq121212}
\|y_{k}-x_{k}\|\leq\delta \hspace{2mm}\mbox{ and }\hspace{2mm} \|\nabla f_{i}(x_{k})\|\leq\rho_{i}
\end{eqnarray}
  for all $k$.
  \end{remark}
\label{remark3333}We then use the following definitions: \begin{eqnarray}\label{eq232323}\rho=\max\limits_{i=1,\ldots,m}\rho_{i} \hspace{2mm}\mbox{ and }\hspace{2mm} \tau=\frac{\gamma}{\delta(\rho+ L_{G})},\end{eqnarray}
  where $L_G$ and $\gamma$ are defined in~\eqref{eq:Lipschitz} and~\eqref{eq:gamma}, respectively.

\begin{lemma}\label{LmS-06} Let $\{x_{k}\}_{k=0}^{\infty}$,and  $\{d_{k}\}_{k=0}^{\infty}$ be the sequences generated by Algorithm \ref{algorithm1}.
	Then, the following inequalities hold. 
	\begin{description}
		\item[(a).]	$-\tau \psi(x_{k},x_{k}+d_{k})\leq(1-2\beta)^{l_{k}}$ for all $k$.
		
		\item[(b).]
		$
		\tau\varepsilon_{0}\|d_{k}\|^{2}\leq(1-2\beta)^{l_{k}}$ for all $k$.
		
	\end{description}	
\end{lemma}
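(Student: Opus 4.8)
The plan is to first extract a uniform lower bound $(1-2\beta)^{l_k}\ge\gamma$ on the line-search stepsize, and then to show that each of the two left-hand quantities in (a) and (b) is bounded above by $\gamma$; everything then follows by chaining already-established inequalities.

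\emph{Stepsize lower bound.} I would reproduce the case split from the proof of Lemma \ref{serakeb}(b). If $L>2\varepsilon_k$, then Lemma \ref{lemS2-04} applied with $\varepsilon=\varepsilon_k$ (legitimate since $d_k$ is an $\varepsilon_k$-normalized descent direction of $\mathcal{V}$ at $x_k$ by Lemma \ref{serakeb}(a)) yields $(1-2\beta)^{l_k}>\tfrac{2\varepsilon_k(1-2\beta)^2}{L}$; because $\{\varepsilon_k\}$ is non-decreasing (Lemma \ref{serakeb}(b)) we have $\varepsilon_k\ge\varepsilon_0$, hence $(1-2\beta)^{l_k}>\tfrac{2\varepsilon_0(1-2\beta)^2}{L}\ge\gamma$. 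If $L\le 2\varepsilon_k$, then $l_k=1$ by Remark \ref{linesearch}, so $(1-2\beta)^{l_k}=1-2\beta\ge\gamma$. In either case $(1-2\beta)^{l_k}\ge\gamma$ for all $k$.

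\emph{Proof of (a).} I would bound $-\psi(x_k,d_k)$ from above. Expanding $\psi(x_k,d_k)=\max_{i}\{\langle\nabla f_i(x_k),d_k\rangle+g_i(x_k+d_k)-g_i(x_k)\}$, I apply Cauchy--Schwarz to the linear term and the $L_{g_i}$-Lipschitz continuity of $g_i$ on $\mathrm{dom}(g_i)$ to the nonsmooth term. The latter is valid because $x_k\in\Omega\subseteq\mathcal{D}\subseteq\mathrm{dom}(g_i)$ and, since $d_k$ equals $y_k$ or a multiple $\lambda y_k$ with $\lambda\in(0,1]$ (Lemma \ref{LmS-044444}), the point $x_k+d_k=(1-\lambda)x_k+\lambda(x_k+y_k)$ is a convex combination of $x_k\in\mathrm{dom}(g_i)$ and $x_k+y_k\in\mathrm{dom}(g_i)$ — the latter because $\psi(x_k,y_k)$ is finite by STEP 1 — so it lies in the convex set $\mathrm{dom}(g_i)$. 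Using $\|\nabla f_i(x_k)\|\le\rho_i\le\rho$ and $L_{g_i}\le L_G$ together with $\|d_k\|\le\|y_k\|\le\delta$ from (\ref{nani29}), (\ref{eq121212}) and (\ref{eq232323}),
\[
-\psi(x_k,d_k)\le|\psi(x_k,d_k)|\le\max_{i=1,\ldots,m}\big(\|\nabla f_i(x_k)\|+L_{g_i}\big)\,\|d_k\|\le(\rho+L_G)\,\delta .
\]
Multiplying by $\tau=\tfrac{\gamma}{\delta(\rho+L_G)}>0$ gives $-\tau\psi(x_k,d_k)\le\gamma\le(1-2\beta)^{l_k}$, which is (a). For (b), since $d_k$ is an $\varepsilon_k$-normalized descent direction (Lemma \ref{serakeb}(a)), $\psi(x_k,d_k)+\varepsilon_k\|d_k\|^2\le 0$, so $\varepsilon_0\|d_k\|^2\le\varepsilon_k\|d_k\|^2\le-\psi(x_k,d_k)$ using $\varepsilon_0\le\varepsilon_k$; multiplying by $\tau>0$ and invoking (a) gives $\tau\varepsilon_0\|d_k\|^2\le-\tau\psi(x_k,d_k)\le(1-2\beta)^{l_k}$.

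The only genuinely delicate point is the domain bookkeeping for the Lipschitz estimate in the proof of (a), namely verifying $x_k+d_k\in\mathrm{dom}(g_i)$; this is resolved by the convex-combination structure of $d_k$ built into STEP 2 of the algorithm. The remaining steps are routine applications of the stepsize lower bound, the normalized-descent inequality, and the boundedness assumptions.
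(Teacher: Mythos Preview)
Your proof is correct and follows essentially the same route as the paper: the same case split ($L>2\varepsilon_k$ versus $L\le 2\varepsilon_k$) to get $(1-2\beta)^{l_k}\ge\gamma$, the same Cauchy--Schwarz/Lipschitz bound $-\psi(x_k,d_k)\le(\rho+L_G)\delta$, and the same use of the $\varepsilon_k$-normalized descent inequality for part (b). Your organisation is slightly cleaner (you bound $-\psi(x_k,d_k)$ first and then multiply by $\tau$, whereas the paper routes through the intermediate inequality $-\gamma\beta\psi(x_k,d_k)\le -\beta(1-2\beta)^{l_k}\psi(x_k,d_k)$ before substituting), and your explicit verification that $x_k+d_k\in\mathrm{dom}(g_i)$ via the convex-combination structure of $d_k$ fills a small gap that the paper leaves implicit.
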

\begin{proof} If $L\leq2\varepsilon_{k}$, then $l_{k}=1$ from Remark~\ref{linesearch} and hence we have  $\gamma\leq  1-2\beta=(1-2\beta)^{l_{k}}.$ If $L>2\varepsilon_{k}$, then by Lemma \ref{lemS2-04} and the fact that $\varepsilon_0 \le \varepsilon_k$ for all $k$, we get  $\gamma\leq  \frac{2\varepsilon_{0}(1-2\beta)^{2}}{L}\leq\frac{2\varepsilon_{k}(1-2\beta)^{2}}{L}<(1-2\beta)^{l_{k}}.$
	Thus, we have $$0<\gamma\leq(1-2\beta)^{l_{k}}.$$ This, together with Lemma \ref{LmS-05} $(a)$ and $\psi(x_{k},x_{k}+d_{k})\leq 0$ (see Lemma~\ref{LmS-044444} $(b)$) gives
	\begin{eqnarray}\label{S3-003}
	0\leq -\gamma\beta \psi(x_{k},x_{k}+d_{k})\leq&-\beta(1-2\beta)^{l_{k}} \psi(x_{k},x_{k}+d_{k}).
	\end{eqnarray}
	But for all $r=1,\ldots,m$, we have
	\begin{eqnarray}\label{S3-004}
	0\nonumber&\leq&-\psi(x_{k},x_{k}+d_{k})\nonumber\\&=& 
	-\max\limits_{i=1,\ldots,m}\big\{\langle \nabla f_{i}(x_{k}),d_{k} \rangle+g_{i}(x_{k}+d_{k})-g_{i}(x_{k})\big\}\nonumber\\&\leq&  -(\langle \nabla f_{r}(x_{k}),d_{k} \rangle+g_{r}(x_{k}+d_{k})-g_{r}(x_{k}))\nonumber\\&=&  \langle \nabla f_{r}(x_{k}),-d_{k} \rangle-g_{r}(x_{k}+d_{k})+g_{r}(x_{k})\nonumber\nonumber\\&\leq& \|\nabla f_{r}(x_{k})\|\|d_{k}\|+ |g_{r}(x_{k}+d_{k})-g_{r}(x_{k})|\nonumber\\&\leq&  (\|\nabla f_{r}(x_{k})\|+ L_{g_{r}})\|d_{k}\|,
	\end{eqnarray}
    where the third inequality holds from Cauchy-Schwarz and the last one follows from the Lipschitz continuity of~$g_r$.
	Using the boundedness of the sequences $\{y_{k}\}_{k=0}^{\infty}$ and $\{\nabla f_{i}(x_{k})\}_{k=0}^{\infty}$ for all $i=1,\ldots,m$ and the inequalities (\ref{nani29}), (\ref{eq121212}), (\ref{eq232323}), (\ref{S3-003}) and (\ref{S3-004}), we get
	\begin{eqnarray}
	0\leq-\gamma\beta \psi(x_{k},x_{k}+d_{k})\nonumber&\leq&\beta(1-2\beta)^{l_{k}}  (\|\nabla f_{r}(x_{k})\|+ L_{g_{r}})\|d_{k}\|\nonumber\\&\leq&\beta(1-2\beta)^{l_{k}}  (\rho+ L_{G})\delta.\nonumber
	\end{eqnarray}
	Therefore, for the value of $\tau$ in (\ref{eq232323}) the last relation gives
$0\leq	-\tau \psi(x_{k},x_{k}+d_{k})\nonumber\leq (1-2\beta)^{l_{k}}$, that is $(a)$ is satisfied. Moreover, since $d_k$ is an $\varepsilon_k$-normalized descent direction at $x_k$ from Lemma~\ref{serakeb}, and $\varepsilon_0 \le \varepsilon_k$, we have $\tau\varepsilon_{0}\|d_{k}\|^{2}\leq	\tau\varepsilon_{k}\|d_{k}\|^{2}\leq-\tau \psi(x_{k},x_{k}+d_{k})$, which together with $(a)$ gives the inequality $(b)$. \qed
\end{proof}	
\begin{lemma} \label{LmS-07}
Let $\{x_{k}\}_{k=0}^{\infty}$ and $\{d_{k}\}_{k=0}^{\infty}$ be the sequences generated by Algorithm \ref{algorithm1}. Then, the following inequalities hold.
	\begin{description}	
\item[(a).]
$\mathcal{V}(x_{k+1})-\mathcal{V}(x_{k})\nonumber\preceq-\tau\beta \big[\psi(x_{k},x_{k}+d_{k})\big]^{2}e_{m}$ for all $k$.		\item[(b).]
		$\mathcal{V}(x_{k+1})-\mathcal{V}(x_{k})\nonumber\preceq-\tau\beta \varepsilon_{0}^{2}\|d_{k}\|^{4}e_{m}$ for all $k$.			
	\end{description}	
\end{lemma}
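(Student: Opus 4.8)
\emph{Proof plan.} The strategy is to sharpen the one-step decrease estimate behind Lemma~\ref{LmS-05} by inserting the lower bounds on the Armijo step length $(1-2\beta)^{l_k}$ supplied by Lemma~\ref{LmS-06}. First I would redo the short chain that underlies Lemma~\ref{LmS-05}: starting from the acceptance rule (\ref{aaa22}) with $j=l_k$ (so that $x_{k+1}=x_k+(1-2\beta)^{l_k}d_k$), then using Lemma~\ref{LmS3-02}(a) to bound $J\mathcal{F}(x_k)d_k+\mathcal{G}(x_k+d_k)-\mathcal{G}(x_k)\preceq\psi(x_k,d_k)e_m$, and finally discarding the nonnegative term $\varepsilon_k\|d_k\|^2$, one arrives at the intermediate estimate $\mathcal{V}(x_{k+1})-\mathcal{V}(x_k)\preceq(1-2\beta)^{l_k}\beta\,\psi(x_k,d_k)\,e_m$ valid for all $k$ (equivalently, keeping the term one has $\mathcal{V}(x_{k+1})-\mathcal{V}(x_k)\preceq(1-2\beta)^{l_k}\beta\,(\psi(x_k,d_k)-\varepsilon_k\|d_k\|^2)\,e_m$).

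For part (a), recall from Lemma~\ref{serakeb}(a) that $d_k\neq0$ is an $\varepsilon_k$-normalized descent direction and $\psi(x_k,d_k)\le0$, hence $\beta\,\psi(x_k,d_k)\le0$. Lemma~\ref{LmS-06}(a) gives $0\le-\tau\,\psi(x_k,d_k)\le(1-2\beta)^{l_k}$; multiplying this chain through by the nonpositive scalar $\beta\,\psi(x_k,d_k)$ reverses the inequalities and yields $(1-2\beta)^{l_k}\beta\,\psi(x_k,d_k)\le-\tau\beta\,[\psi(x_k,d_k)]^2$. Chaining this with the intermediate estimate gives $\mathcal{V}(x_{k+1})-\mathcal{V}(x_k)\preceq-\tau\beta\,[\psi(x_k,d_k)]^2e_m$, which is (a).

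For part (b), I would deduce it from (a). Since $d_k$ is an $\varepsilon_k$-normalized descent direction at $x_k$, $\psi(x_k,d_k)+\varepsilon_k\|d_k\|^2\le0$, so $-\psi(x_k,d_k)\ge\varepsilon_k\|d_k\|^2\ge\varepsilon_0\|d_k\|^2\ge0$, using $\varepsilon_0\le\varepsilon_k$ from Lemma~\ref{serakeb}(b). Squaring the nonnegative quantities gives $[\psi(x_k,d_k)]^2\ge\varepsilon_0^2\|d_k\|^4$, whence $-\tau\beta\,[\psi(x_k,d_k)]^2\le-\tau\beta\,\varepsilon_0^2\|d_k\|^4$; combining with (a) proves (b). (Alternatively, keep the $-\varepsilon_k\|d_k\|^2$ term in the intermediate estimate, discard instead $\psi(x_k,d_k)\le0$ to reach $\mathcal{V}(x_{k+1})-\mathcal{V}(x_k)\preceq-(1-2\beta)^{l_k}\beta\varepsilon_0\|d_k\|^2e_m$, and then apply Lemma~\ref{LmS-06}(b) in the same sign-reversing fashion.)

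The computation is short; the one thing to watch is the orientation of the scalar inequalities, since every multiplication is by a nonpositive quantity — a multiple of $\psi(x_k,d_k)$ in (a), of $-\|d_k\|^2$ in (b) — so each $\le$ flips, and the componentwise order $\preceq$ together with the fixed direction vector $e_m$ must be carried consistently through every line. I do not anticipate any genuine obstacle beyond this bookkeeping.
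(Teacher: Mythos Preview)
Your proof is correct and follows essentially the same route as the paper: for (a) both you and the paper start from the Armijo acceptance inequality, pass to $\psi(x_k,d_k)e_m$ via Lemma~\ref{LmS3-02}(a), drop the nonnegative $\varepsilon_k\|d_k\|^2$ term, and then apply Lemma~\ref{LmS-06}(a) with the sign flip. For (b) your primary argument (deduce it from (a) using $-\psi(x_k,d_k)\ge\varepsilon_k\|d_k\|^2\ge\varepsilon_0\|d_k\|^2$ and squaring) is a mild shortcut compared to the paper, which instead keeps the $-\varepsilon_k\|d_k\|^2$ piece and invokes Lemma~\ref{LmS-06}(b) directly---exactly the alternative you parenthetically describe.
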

\begin{proof} Since 
	$-\tau \psi(x_{k},x_{k}+d_{k})\leq(1-2\beta)^{l_{k}}$ from Lemma \ref{LmS-06} $(a)$, $\psi(x_{k},x_{k}+d_{k})\leq 0$ from Lemma~\ref{serakeb}, $\varepsilon_{k}\|d_{k}\|^{2}\geq 0$, and $\mathcal{V}(x_{k+1})-\mathcal{V}(x_{k})\preceq(1-2\beta)^{l_{k}}\beta (\psi(x_{k},x_{k}+d_{k})-\varepsilon_{k}\|d_{k}\|^{2})e_{m}$ from the Armijo-type condition, we have
		\begin{eqnarray}
	\mathcal{V}(x_{k+1})-\mathcal{V}(x_{k})\preceq(1-2\beta)^{l_{k}}\beta \psi(x_{k},x_{k}+d_{k})e_{m}\preceq-\beta\tau \big[\psi(x_{k},x_{k}+d_{k})\big\}\big]^{2}e_{m}.\nonumber
	\end{eqnarray}
    Similarly, recalling that $\varepsilon_0 \le \varepsilon_k$, we have 
 \begin{eqnarray}
 \mathcal{V}(x_{k+1})-\mathcal{V}(x_{k})\preceq-(1-2\beta)^{l_{k}}\beta \varepsilon_{k}\|d_{k}\|^{2}e_{m}\nonumber&\preceq&-(1-2\beta)^{l_{k}}\beta \varepsilon_{0}\|d_{k}\|^{2}e_{m}\nonumber\\&\preceq&-\tau\beta \varepsilon_{0}^{2}\|d_{k}\|^{4}e_{m},\nonumber
 \end{eqnarray}
	where the last inequality holds from Lemma \ref{LmS-06} $(b)$. \qed
\end{proof}	
\begin{lemma}\label{LmS-06666666}
Define
\begin{equation}
    \label{eq:varpi}
    \varpi:=\min\Big\{\tau,\frac{\gamma}{\bar{\varepsilon}\delta^{2}}\Big\},
\end{equation}
where $\tau$ and $\bar{\varepsilon}$ are given in (\ref{eq232323}) and Lemma~\ref{serakeb}~$(b)$, respectively. Let $\{x_{k}\}_{k=0}^{\infty}$ and $\{y_{k}\}_{k=0}^{\infty}$ be the sequences generated by Algorithm \ref{algorithm1}. Then, for all $k$ we have
$$\mathcal{V}(x_{k+1})-\mathcal{V}(x_{k})\preceq-\varpi\beta \big[\psi(x_{k},y_{k})\big]^{2}e_{m}.$$ 
\end{lemma}
\begin{proof} If $\psi(x_{k},y_{k})\leq -\varepsilon_{k}\|y_{k}-x_{k}\|^{2}$, then $d_{k}=y_{k}-x_{k}$, and hence, using Lemma \ref{LmS-07} $(a)$, we have 	$\mathcal{V}(x_{k+1})-\mathcal{V}(x_{k})\nonumber\preceq-\tau\beta \big[\psi(x_{k},y_{k})\big]^{2}e_{m}$. If $\psi(x_{k},y_{k})> -\varepsilon_{k}\|y_{k}-x_{k}\|^{2}$, then by definition, $d_{k}=\frac{t_{k}(y_{k}-x_{k})}{\varepsilon_{k}\|y_{k}-x_{k}\|^{2}}$, where $t_{k}=\big|\psi(x_{k},y_{k})\big|=-\psi(x_{k},y_{k}).$ Thus, Lemma \ref{LmS-044444} $(a)$ and Lemma \ref{LmS-05} $(a)$  gives
	\begin{eqnarray}
	\mathcal{V}(x_{k+1})-\mathcal{V}(x_{k})\nonumber&\preceq&\gamma\beta \psi(x_{k},x_{k}+d_{k})e_{m}\nonumber\\&\preceq&\frac{\gamma\beta t_{k}}{\varepsilon_{k}\|y_{k}-x_{k}\|^{2}} \psi(x_{k},y_{k})e_{m}=\frac{\gamma\beta \big|\psi(x_{k},y_{k})\big|}{\varepsilon_{k}\|y_{k}-x_{k}\|^{2}} \psi(x_{k},y_{k})e_{m}\nonumber\\&=&\frac{-\gamma\beta}{\varepsilon_{k}\|y_{k}-x_{k}\|^{2}} \big[\psi(x_{k},y_{k})\big]^{2}e_{m}\preceq\frac{-\gamma\beta}{\bar{\varepsilon}\delta^{2}} \big[\psi(x_{k},y_{k})\big]^{2}e_{m},\nonumber
	\end{eqnarray} 
where the last inequality holds from (\ref{eq121212}) and Lemma \ref{serakeb} $(b)$.	 Therefore, from the two cases we obtain 
	$\mathcal{V}(x_{k+1})-\mathcal{V}(x_{k})\nonumber\preceq-\min\big\{\tau,\frac{\gamma}{\bar{\varepsilon}\delta^{2}}\big\}\beta \big[\psi(x_{k},y_{k})\big]^{2}e_{m}$ for all $k$. \qed
\end{proof}	

\begin{lemma}\label{LmS-08}Let $\{x_{k}\}_{k=0}^{\infty}$, $\{y_{k}\}_{k=0}^{\infty}$, and $\{d_{k}\}_{k=0}^{\infty}$ be the sequences generated by Algorithm \ref{algorithm1}. Then, it holds that
\begin{enumerate}
    \item[(a).] $\lim\limits_{k\rightarrow\infty}(\mathcal{V}(x_{k})-\mathcal{V}(x_{k+1}))=0.$
     \item[(b).] $\lim\limits_{k\rightarrow\infty}\|d_{k}\|=\lim\limits_{k\rightarrow\infty}\psi(x_{k},x_{k}+d_{k})=\lim\limits_{k\rightarrow\infty}\psi(x_{k},y_{k})=0.$
\end{enumerate}
\end{lemma}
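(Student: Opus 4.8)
The plan is to get all three limits from a single ingredient: the scalar sequences $\{\mathcal{V}_i(x_k)\}_{k=0}^{\infty}$ are convergent for each $i\in\{1,\ldots,m\}$, after which each limit drops out by a squeeze against the per‑step decrease bounds already established. First I would observe, via Lemma \ref{LmS-05}(c), that $\{\mathcal{V}(x_k)\}_{k=0}^{\infty}$ is componentwise non‑increasing, hence $\mathcal{V}(x_k)\preceq\mathcal{V}(x_0)$ and therefore $x_k\in\Lambda(\mathcal{V},x_0)$ for every $k$. By Assumption \ref{A-2}(B2) this set is bounded, and it is closed because $\Omega$ is closed and each $\mathcal{V}_i=f_i+g_i$ is lower semicontinuous (sum of the continuous $f_i$ and the proper lsc $g_i$ from Assumption \ref{AA-111}), so $\Lambda(\mathcal{V},x_0)=\Omega\cap\bigcap_{i=1}^{m}\{x:\mathcal{V}_i(x)\le\mathcal{V}_i(x_0)\}$ is compact. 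Consequently $\mathcal{V}_i$ is bounded below on it, so $\{\mathcal{V}_i(x_k)\}_{k=0}^{\infty}$ is a bounded non‑increasing sequence and thus converges; in particular $\mathcal{V}_i(x_k)-\mathcal{V}_i(x_{k+1})\to 0$ for each $i$.

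Next I would fix one index $i$ and read off the $i$‑th component of the three descent inequalities. Lemma \ref{LmS-07}(a) yields $0\le\tau\beta\,[\psi(x_k,d_k)]^2\le\mathcal{V}_i(x_k)-\mathcal{V}_i(x_{k+1})$, Lemma \ref{LmS-07}(b) yields $0\le\tau\beta\varepsilon_0^2\,\|d_k\|^4\le\mathcal{V}_i(x_k)-\mathcal{V}_i(x_{k+1})$, and Lemma \ref{LmS-06666666} yields $0\le\varpi\beta\,[\psi(x_k,y_k)]^2\le\mathcal{V}_i(x_k)-\mathcal{V}_i(x_{k+1})$. Since $\tau,\beta,\varepsilon_0,\varpi$ are fixed strictly positive constants and the right‑hand sides tend to $0$, the squeeze theorem forces $\psi(x_k,d_k)\to 0$, $\|d_k\|\to 0$, and $\psi(x_k,y_k)\to 0$, which is exactly the assertion of Lemma \ref{LmS-08}.

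The only genuinely delicate point is the boundedness‑below step: one must argue that the iterates never leave a set on which the $\mathcal{V}_i$ are bounded below, and this is precisely where Assumption \ref{A-2}(B2) has to be coupled with closedness of $\Omega$ and lower semicontinuity of $f_i+g_i$ to produce compactness of $\Lambda(\mathcal{V},x_0)$. Everything after that is a one‑line limit argument. (An equivalent route, if one prefers to sidestep convergence of $\{\mathcal{V}_i(x_k)\}$, is to telescope the inequalities of Lemmas \ref{LmS-07}–\ref{LmS-06666666} over $k=0,\ldots,N-1$ and use $\mathcal{V}_i(x_N)\ge\mathcal{V}_{i^*}^{inf}$, obtaining $\sum_k[\psi(x_k,d_k)]^2<\infty$, $\sum_k\|d_k\|^4<\infty$, and $\sum_k[\psi(x_k,y_k)]^2<\infty$, whence the terms go to zero; this still relies on $\mathcal{V}_{i^*}^{inf}$ being finite, i.e.\ on the same boundedness argument.)
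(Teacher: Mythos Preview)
Your proposal is correct and follows essentially the same route as the paper: both arguments show that $\mathcal{V}_i(x_k)-\mathcal{V}_i(x_{k+1})\to 0$ and then squeeze against the descent bounds of Lemmas \ref{LmS-07} and \ref{LmS-06666666} (the paper also cites Lemma \ref{LmS-05} as an alternative). The only cosmetic difference is that the paper obtains convergence of $\{\mathcal{V}(x_k)\}$ via a subsequence argument plus continuity of $\mathcal{V}$ on $\Omega$, whereas you get it directly from monotonicity together with boundedness below on the compact level set; both are standard and equivalent here.
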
	
\begin{proof}  Since $\{x_{k}\}_{k=0}^{\infty}$ is bounded (see Remark \ref{remark3333}), there exists a subsequence $\{x_{k_{p}}\}$ of $\{x_{k}\}_{k=0}^{\infty}$ such that $x_{k_{p}}\rightarrow x^{*}$ as $p\rightarrow \infty$. From $\mathcal{F}$ is continuous and $x_{k_{p}}\rightarrow x^{*}$, we have $\mathcal{F}(x_{k_{p}})\rightarrow \mathcal{F}(x^{*})$ as $p\rightarrow \infty$. Moreover, from $\mathcal{V}=\mathcal{F}+\mathcal{G}$ we have
 \begin{align}
 \label{corr11}
     \|\mathcal{V}(x_{k_{p}})-\mathcal{V}(x^{*})\|\nonumber&\leq \|\mathcal{F}(x_{k_{p}})-\mathcal{F}(x^{*})\|+\|\mathcal{G}(x_{k_{p}})-\mathcal{G}(x^{*})\|\\&\leq \|\mathcal{F}(x_{k_{p}})-\mathcal{F}(x^{*})\|+L_{G}\|x_{k_{p}}-x^{*}\|,
 \end{align}
 where the last inequality is obtained since $\mathcal{G}$ is $L_{G}$-Lipschitz continuous. Hence, \eqref{corr11} in view of $x_{k_{p}}\rightarrow x^{*}$ and $\mathcal{F}(x_{k_{p}})\rightarrow \mathcal{F}(x^{*})$ yields  $\mathcal{V}(x_{k_{p}})\rightarrow \mathcal{V}(x^{*})$. Since the sequence $\{\mathcal{V}(x_{k})\}_{k=0}^{\infty}$ is  monotonically decreasing from Lemma \ref{LmS-05} $(c)$, the whole sequence $\{\mathcal{V}(x_{k})\}_{k=0}^{\infty}$  converges and $\mathcal{V}(x_{k})\rightarrow \mathcal{V}(x^{*})$ as $k\rightarrow \infty$. This implies
	\begin{eqnarray}\label{ll111}
	\lim\limits_{k\rightarrow\infty}(\mathcal{V}(x_{k})-\mathcal{V}(x_{k+1}))=0.
	\end{eqnarray}
	Therefore, (\ref{ll111}) and Lemma \ref{LmS-07} $(a)$ and $(b)$ (alternatively Lemma \ref{LmS-05} $(a)$ and $(b)$) give  $\lim\limits_{k\rightarrow\infty}\|d_{k}\|=\lim\limits_{k\rightarrow\infty}\psi(x_{k},x_{k}+d_{k})=0$. Moreover, $\lim\limits_{k\rightarrow\infty}\psi(x_{k},y_{k})=0$ is immediate from (\ref{ll111}) and Lemma \ref{LmS-06666666}. \qed
\end{proof}
\section{Convergence Analysis}\label{Sec4}
As a consequence of the facts of Section \ref{Sec3}, we now study the convergence rate and iteration complexity of the sequences generated by Algorithm \ref{algorithm1} ({\bf A-GCG}).
The following is the main theorem related to the  convergence with respect to Pareto criticality.
\begin{theorem}\label{Thm-01}
	Every limit point of the sequence $\{x_{k}\}_{k=0}^{\infty}$ generated by Algorithm \ref{algorithm1} is a Pareto critical point of (\ref{prob}).
\end{theorem}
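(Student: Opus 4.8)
The plan is to combine the asymptotic vanishing of the search quantities from Lemma~\ref{LmS-08} with the criticality characterization $\theta(x)=0$ of Lemma~\ref{LmS3-02}~$(b)$. If \textbf{A-GCG} stops at some iterate $x_{k}$ with $\psi(x_{k},y_{k})=0$, then $x_{k}$ is already Pareto critical by Lemma~\ref{LmS-04}~$(b)$; so assume the generated sequence is infinite, fix a limit point $x^{*}$ together with a subsequence $x_{k_{p}}\to x^{*}$, and note $x^{*}\in\Omega$ since $\Omega$ is closed. Because $\theta(x^{*})\le 0$ always, by Lemma~\ref{LmS3-02}~$(b)$ it suffices to prove $\theta(x^{*})\ge 0$.

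First I would show $\theta(x_{k})\to 0$. By Lemma~\ref{LmS-08}, $\psi(x_{k},y_{k})\to 0$, hence there is $K$ with $\psi(x_{k},y_{k})>-\alpha_{\min}\ge-\alpha_{k}$ for all $k\ge K$. Together with the selection rule $\psi(x_{k},y_{k})\le\max\{\sigma_{k}\theta(x_{k}),-\alpha_{k}\}$ of \textbf{STEP 1}, this forces $\psi(x_{k},y_{k})\le\sigma_{k}\theta(x_{k})$ whenever $k\ge K$ (the alternative $\psi(x_{k},y_{k})\le -\alpha_{k}$ is then impossible). Since $\psi(x_{k},y_{k})\le 0$ by Lemma~\ref{LmS-04}~$(a)$ and $0<\sigma_{\min}\le\sigma_{k}\le 1$, we get $\theta(x_{k})\ge\psi(x_{k},y_{k})/\sigma_{k}\ge\psi(x_{k},y_{k})/\sigma_{\min}\to 0$, so $\liminf_{k\to\infty}\theta(x_{k})\ge 0$; with $\theta(x_{k})\le 0$ from Lemma~\ref{LmS3-02}~$(b)$ this yields $\theta(x_{k})\to 0$.

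Then I would pass to the limit along $\{x_{k_{p}}\}$ by an upper semicontinuity argument. For each fixed $y\in\Omega$, the function $x\mapsto\psi(x,y)=\max_{i=1,\ldots,m}\{\langle\nabla f_{i}(x),y\rangle+g_{i}(x+y)-g_{i}(x)\}$ is upper semicontinuous on $\Omega$: each $\langle\nabla f_{i}(\cdot),y\rangle$ is continuous by \textbf{(A1)}, each $x\mapsto g_{i}(x+y)-g_{i}(x)$ is continuous on $\Omega$ because of the Lipschitz continuity of $g_{i}$ on $\mathrm{dom}(g_{i})\supseteq\Omega$ in \textbf{(A2)} (this is immediate when $\mathcal{D}=\mathbb{R}^{n}$), and a finite maximum of upper semicontinuous functions is upper semicontinuous. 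Consequently $\theta=\inf_{y\in\Omega}\psi(\cdot,y)$, being an infimum of upper semicontinuous functions, is upper semicontinuous on $\Omega$, and therefore $\theta(x^{*})\ge\limsup_{p\to\infty}\theta(x_{k_{p}})=0$, the equality coming from $\theta(x_{k})\to 0$. Combined with $\theta(x^{*})\le 0$ this gives $\theta(x^{*})=0$, so $x^{*}$ is Pareto critical by Lemma~\ref{LmS3-02}~$(b)$.

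I expect the main (and essentially only) technical obstacle to be the upper semicontinuity of $\psi(\cdot,y)$, and hence of $\theta$, at $x^{*}$: because $\theta$ is an infimum and each $g_{i}$ is only assumed Lipschitz \emph{on its domain}, one must ensure that the quantities $g_{i}(x_{k_{p}}+y)$ remain finite (automatic for $\mathcal{D}=\mathbb{R}^{n}$, and otherwise handled through the standing hypothesis $\Omega\subset\mathcal{D}$). Equivalently, one can avoid naming the usc of $\theta$: if $\theta(x^{*})<0$, Assumption~\ref{A-00} provides $\bar y\in\Omega$ with $\psi(x^{*},\bar y)<0$, and then $\theta(x_{k_{p}})\le\psi(x_{k_{p}},\bar y)$ together with $\limsup_{p\to\infty}\psi(x_{k_{p}},\bar y)\le\psi(x^{*},\bar y)<0$ contradicts $\theta(x_{k})\to 0$.
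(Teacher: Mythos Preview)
Your proof is correct and essentially matches the paper's. The paper proceeds slightly more directly---fixing $z\in\Omega$, using $\psi(x_k,y_k)\le\max\{\sigma_{\min}\psi(x_k,z),-\alpha_{\min}\}$ (which follows from $\theta(x_k)\le\psi(x_k,z)$), and passing to the limit along the subsequence via continuity of $\psi(\cdot,z)$ and Lemma~\ref{LmS-08} to conclude $\psi(\bar x,z)\ge 0$ for all $z$---which is exactly your final ``alternative'' argument, while your main route through $\theta(x_k)\to 0$ and upper semicontinuity of $\theta$ is the same idea repackaged.
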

\begin{proof}
	Since the sequence $\{x_{k}\}$ is  bounded, and hence it has at least one limit point. Let $\bar{x}$ be 	the limit point of $\{x_{k}\}$, and $\{x_{k_{q}}\}$ be the subsequence of $\{x_{k}\}$ such that 	$x_{k_{q}}\rightarrow \bar{x} \mbox{ as } q\rightarrow \infty$. Since  
 $g_{i}$ is $L_{g_{i}}$-Lipschitz continuous, we have $g_i(x_{k_{p}})\rightarrow g_i(\bar{x})$. Using Cauchy–Schwarz inequality and \eqref{eq121212}, we have  $$|\langle\nabla f_{i}(x_{k_{q}}),\bar{x}-x_{k_{q}}\rangle|\leq \|\nabla f_{i}(x_{k_{q}})\|\|\bar{x}-x_{k_{q}}\|\leq \rho_{i}\|\bar{x}-x_{k_{q}}\|,$$
 which implies by $x_{k_{q}}\rightarrow \bar{x}$ that  $\langle\nabla f_{i}(x_{k_{q}}),\bar{x}-x_{k_{q}}\rangle\rightarrow 0$ for all $i=1,\ldots,m$. Moreover, the Lipschitz contiunity of $\nabla f_{i}(.)$ gives $\nabla f_{i}(x_{k_{q}})\rightarrow \nabla f_{i}(\bar{x})$, and thus $\langle\nabla f_{i}(x_{k_{q}}),z\rangle\rightarrow\langle\nabla f_{i}(\bar{x}),z\rangle$ for all $z\in\mathbb{R}^n$ and $i=1,\ldots,m$. Now, for each $i=1,\ldots,m$, 
 \begin{align}
   \langle\nabla f_{i}(x_{k_{q}}),y-x_{k_{q}}\rangle=\langle\nabla f_{i}(x_{k_{q}}),y-\bar{x}\rangle+\langle\nabla f_{i}(x_{k_{q}}),\bar{x}-x_{k_{q}}\rangle.\nonumber
 \end{align} Hence, $\langle\nabla f_{i}(x_{k_{q}}),y-x_{k_{q}}\rangle\rightarrow\langle\nabla f_{i}(\bar{x}),y-\bar{x}\rangle$ for all $i=1,\ldots,m$. By $\langle\nabla f_{i}(x_{k_{q}}),y-x_{k_{q}}\rangle\rightarrow\langle\nabla f_{i}(\bar{x}),y-\bar{x}\rangle$,
 $g_i(x_{k_{p}})\rightarrow g_i(\bar{x})$ and  continuity of maximum function we obtain 
 \begin{equation}\label{converg}
\psi(x_{k_{q}},z)\rightarrow\psi(\bar{x},z) \hspace{2mm}\mbox{ as } \hspace{2mm}q\rightarrow \infty. 
 \end{equation}
 From the algorithm and the 	definition of the estimates $\alpha_{\min}$ and $\sigma_{\min}$, we have
	\begin{eqnarray}\label{S3-009}
	\psi(x_{k},y_{k})\nonumber&\leq&\max\big\{\sigma_{k}\theta(x_{k}),-\alpha_{k}\big\}\nonumber\\&\leq&\max\Big\{\sigma_{\min}\theta(x_{k}),-\alpha_{\min}\Big\}\nonumber\\&\leq&\max\big\{\sigma_{\min}\psi(x_{k},z),-\alpha_{\min}\big\}
	\end{eqnarray}
	for all $z\in \Omega$, where we also used the fact that $\theta(x_k) \le 0$ (see Lemma~\ref{LmS3-02} $(b)$). Since $\lim\limits_{k\rightarrow\infty}\psi(x_{k},y_{k})=0$ by Lemma \ref{LmS-08}, it follows from \eqref{converg} and \eqref{S3-009} that	
	\begin{eqnarray}
	0=\lim\limits_{q\rightarrow\infty}\psi(x_{k_{q}},y_{k_{q}})\nonumber&\leq&\lim\limits_{q\rightarrow\infty}\big(\max\big\{\sigma_{\min}\psi(x_{k_{q}},z),-\alpha_{\min}\big\}\big)\nonumber\\&=&\sigma_{\min}\lim\limits_{q\rightarrow\infty}\psi(x_{k_{q}},z)\nonumber\\&=&\sigma_{\min}\psi(\bar{x},z)\nonumber
	\end{eqnarray}	
	which implies that $$\psi(\bar{x},z)=\max\limits_{i=1,\ldots,m}\big[\langle\nabla f_{i}(\bar{x}),z-\bar{x}\rangle +g_{i}(z)-g_{i}(\bar{x})\big]\geq 0$$ for all $z\in   \Omega$. That is $\theta(\bar{x})=\min\limits_{z\in\Omega} \psi(\bar{x},z)\geq 0$, and  therefore, from Lemma \ref{LmS3-02} $(b)$, $\bar{x}$ is a Pareto critical point of (\ref{prob}). \qed 
\end{proof} 

\begin{assumption} \label{A-3}
	For all $k$,
	$\{\alpha_{k}\}_{k=0}^{\infty}$ and $\{\sigma_{k}\}_{k=0}^{\infty}$ are chosen such that	$$\max\big\{\sigma_{k}\theta(x_{k}),-\alpha_{k}\big\}=\sigma_{k}\theta(x_{k}) \mbox{ for all } k.$$	
\end{assumption}
 Assumption \ref{A-3} is manageable as we discussed in Remark \ref{remark2} $(b)$. The next results are considered under this additional assumption.
\begin{lemma}\label{added11}
Let $\{x_{k}\}_{k=0}^{\infty}$ be the sequence generated by Algorithm \ref{algorithm1}. Then, it holds that 
$$\varpi\beta\sigma^2_{\min} \big[\theta(x_{k})\big]^{2}\leq\mathcal{V}_{i}(x_{k})-\mathcal{V}_{i}(x_{k+1}) \hspace{4mm} \mbox{ and }\hspace{4mm} \lim\limits_{k\rightarrow\infty}\theta(x_{k})=0,$$
where $\varpi$ is given in~\eqref{eq:varpi}.
\end{lemma}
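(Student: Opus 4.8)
The plan is to read off the claim from Lemma~\ref{LmS-06666666} once Assumption~\ref{A-3} upgrades the bound in {\bf STEP 1} of {\bf A-GCG}. Under Assumption~\ref{A-3} the choice of $y_k$ satisfies $\psi(x_k,y_k)\le\max\{\sigma_k\theta(x_k),-\alpha_k\}=\sigma_k\theta(x_k)$, and since $\theta(x_k)\le 0$ by Lemma~\ref{LmS3-02}(b) and $\sigma_{\min}\le\sigma_k\le 1$, multiplying the non-positive number $\theta(x_k)$ by the larger factor $\sigma_k$ only decreases it, so $\psi(x_k,y_k)\le\sigma_k\theta(x_k)\le\sigma_{\min}\theta(x_k)\le 0$. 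Squaring this chain of non-positive quantities (i.e.\ $|\psi(x_k,y_k)|\ge\sigma_{\min}|\theta(x_k)|\ge 0$) gives a lower bound on $[\psi(x_k,y_k)]^2$ in terms of $[\theta(x_k)]^2$, and Lemma~\ref{LmS-06666666}, read componentwise as $\mathcal{V}_i(x_k)-\mathcal{V}_i(x_{k+1})\ge\varpi\beta[\psi(x_k,y_k)]^2$ for each $i$, then yields the asserted decrease estimate $\varpi\beta\sigma_{\min}[\theta(x_k)]^2\le\mathcal{V}_i(x_k)-\mathcal{V}_i(x_{k+1})$ (the factor coming from squaring the $\sigma_{\min}$ term is absorbed since $\sigma_{\min}\le 1$).

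For the limit I would give the short argument via Lemma~\ref{LmS-08}: that lemma already supplies $\lim_{k\to\infty}\psi(x_k,y_k)=0$, and since $\psi(x_k,y_k)\le\sigma_{\min}\theta(x_k)\le 0$, the squeeze theorem forces $\sigma_{\min}\theta(x_k)\to 0$, hence $\theta(x_k)\to 0$. A self-contained alternative is to telescope the decrease estimate just proved: summing over $k=0,\dots,N$ and using that $\mathcal{V}_i$ is bounded below on $\Omega$ (so $\mathcal{V}_i(x_0)-\mathcal{V}_i(x_{N+1})\le\mathcal{V}_{i^*}(x_0)-\mathcal{V}_{i^*}^{inf}<\infty$), the series $\sum_k[\theta(x_k)]^2$ converges, so its general term tends to $0$.

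There is no substantive obstacle here; the statement is essentially a corollary of Lemma~\ref{LmS-06666666} and Lemma~\ref{LmS-08}. The only things to be careful about are the reversal of inequalities when multiplying and squaring the non-positive quantities $\theta(x_k)$, $\sigma_k\theta(x_k)$ and $\psi(x_k,y_k)$, and making sure it is precisely Assumption~\ref{A-3} (not merely {\bf STEP 1}) that replaces $\max\{\sigma_k\theta(x_k),-\alpha_k\}$ by $\sigma_k\theta(x_k)$, so that the per-iteration decrease is controlled by $[\theta(x_k)]^2$ rather than only by $[\psi(x_k,y_k)]^2$.
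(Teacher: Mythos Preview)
Your approach is essentially identical to the paper's: invoke Assumption~\ref{A-3} to obtain $|\psi(x_k,y_k)|\ge\sigma_k|\theta(x_k)|\ge\sigma_{\min}|\theta(x_k)|$, square, feed into Lemma~\ref{LmS-06666666}, and then deduce $\theta(x_k)\to 0$ from $\mathcal{V}(x_k)-\mathcal{V}(x_{k+1})\to 0$ (your squeeze via Lemma~\ref{LmS-08} is the same mechanism, since that limit is established in the proof of Lemma~\ref{LmS-08}).

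One genuine slip: your ``absorption'' remark is backwards. Squaring $|\psi(x_k,y_k)|\ge\sigma_{\min}|\theta(x_k)|$ gives $[\psi(x_k,y_k)]^2\ge\sigma_{\min}^2[\theta(x_k)]^2$, so Lemma~\ref{LmS-06666666} yields
\[
\varpi\beta\,\sigma_{\min}^{2}\,[\theta(x_k)]^2\le \mathcal{V}_i(x_k)-\mathcal{V}_i(x_{k+1}).
\]
Since $\sigma_{\min}\le 1$ means $\sigma_{\min}^2\le\sigma_{\min}$, you \emph{cannot} upgrade this to the stated constant $\sigma_{\min}$; the inequality goes the wrong way. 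The paper's own proof writes $\sigma_{\min}$ at this step without justification, so this appears to be a typo in the lemma as stated (note that Theorem~\ref{thm3333} later uses the correct $\sigma_{\min}^2$). Everything downstream, including $\theta(x_k)\to 0$ and Corollary~\ref{remm-7775566}, works unchanged with $\sigma_{\min}^2$.
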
	
\begin{proof}  Note that under Assumption \ref{A-3}, from the algorithm and Lemma \ref{LmS3-02} $(b)$, we have 
$|\psi(x_{k},y_{k})|\geq-\max\big\{\sigma_{k}\theta(x_{k}),-\alpha_{k}\big\}=\sigma_{k}|\theta(x_{k})|\geq\sigma_{\min}|\theta(x_{k})|$ for all $k$. 
Hence, in view of Lemma \ref{LmS-06666666}, we get
$$\varpi\beta\sigma_{\min}^2 \big[\theta(x_{k})\big]^{2}\leq\varpi\beta[\psi(x_{k},y_{k})]^2\leq\mathcal{V}_{i}(x_{k})-\mathcal{V}_{i}(x_{k+1})$$
for all $k$, which also gives $\lim\limits_{k\rightarrow\infty}\theta(x_{k})=0$ using the fact $	\lim\limits_{k\rightarrow\infty}(\mathcal{V}(x_{k})-\mathcal{V}(x_{k+1}))=0$ from Lemma~\ref{LmS-08} $(a)$. \qed 
\end{proof} 

Observe that under Assumption~\ref{A-3}, the above result can be also used to show Theorem~\ref{Thm-01}, because Lemma~\ref{LmS3-02} holds. We now consider approximate Pareto critical points, as defined below, and show the number of iterations required by the algorithm to find such points. In the subsequent results, we consider the following notations:
\begin{align}
\mathcal{V}_{i^{*}}(x_{0}) := & \max\{\mathcal{V}_{i}(x_{0}):i=1,\ldots,m\}, \nonumber \\
 \mathcal{V}^{\mathrm{inf}} :=  & \min\{\mathcal{V}_{i}^{*}:i=1,\ldots,m\},
    \quad \mbox{ where } \mathcal{V}_{i}^{*}:=\inf\{\mathcal{V}_{i}(x):x\in\Omega\}.
    \label{eq:v_inf} 
\end{align}

\begin{definition} Let $\mu\geq0$. The point $x\in \Omega$ is called $\mu$-approximate Pareto critical point of~(\ref{prob}) if  $\theta(x)\geq -\mu$.
\end{definition}
Note that based on Lemma~\ref{LmS3-02} $(b)$, $x\in \Omega$ is $0$-approximate Pareto critical point of~(\ref{prob}) if and only if $x$ is a Pareto critical point of (\ref{prob}). For each $\mu>0$, in the following (Corollary \ref{remm-7775566}), we will show that within ${\bf\it {O}}(1/\mu^{2})$ iterations, Algorithm \ref{algorithm1} yields the first $\mu$-approximate Pareto critical point $x_{k}$.
\begin{corollary}\label{remm-7775566}
	For $\mu>0$, Algorithm \ref{algorithm1} finds a $\mu$-approximate Pareto critical point of~(\ref{prob}) within ${\bf\it {O}}(1/\mu^{2})$ iterations. 
\end{corollary}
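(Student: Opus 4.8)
The corollary is a standard "stopping-time from a sum bound" argument. The key ingredient is Lemma~\ref{added11}, which under Assumption~\ref{A-3} gives the per-iteration decrease
$$\varpi\beta\sigma_{\min}\big[\theta(x_{k})\big]^{2}\leq \mathcal{V}_{i}(x_{k})-\mathcal{V}_{i}(x_{k+1})$$
for each $i=1,\ldots,m$ and all $k$. First I would fix the index $i^{*}$ realizing $\mathcal{V}_{i^{*}}(x_{0})=\max_{i}\mathcal{V}_{i}(x_{0})$ (as in the notation preceding Lemma~\ref{LmS-08}) and sum the inequality for $i=i^{*}$ from $k=0$ to $k=N$. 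The right-hand side telescopes to $\mathcal{V}_{i^{*}}(x_{0})-\mathcal{V}_{i^{*}}(x_{N+1})$, which is bounded above by $\mathcal{V}_{i^{*}}(x_{0})-\mathcal{V}_{i^{*}}^{inf}$ (using $\mathcal{V}_{i^{*}}^{inf}\leq \mathcal{V}_{i^{*}}^{*}\leq \mathcal{V}_{i^{*}}(x_{N+1})$). Hence
$$\varpi\beta\sigma_{\min}\sum_{k=0}^{N}\big[\theta(x_{k})\big]^{2}\leq \mathcal{V}_{i^{*}}(x_{0})-\mathcal{V}_{i^{*}}^{inf}=:\Delta_{0},$$
so that $\min_{k=0,\ldots,N}\big[\theta(x_{k})\big]^{2}\leq \Delta_{0}/\big(\varpi\beta\sigma_{\min}(N+1)\big)$, i.e. $\max_{k=0,\ldots,N}\theta(x_{k})\geq -\sqrt{\Delta_{0}/\big(\varpi\beta\sigma_{\min}(N+1)\big)}$.

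\textbf{From the bound to the iteration count.} Let $K$ be the first index with $\theta(x_{K})\geq -\mu$ (well-defined since $\theta(x_{k})\to 0$ by Lemma~\ref{added11}, and in fact the preceding estimate already forces such an index to exist for $N$ large). For every $k\leq K-1$ we have $\theta(x_{k})<-\mu$, hence $[\theta(x_{k})]^{2}>\mu^{2}$, and applying the summed inequality with $N=K-1$ gives $\varpi\beta\sigma_{\min}\,K\,\mu^{2}<\Delta_{0}$, i.e.
$$K<\frac{\Delta_{0}}{\varpi\beta\sigma_{\min}\,\mu^{2}}.$$
Since $\Delta_{0}$, $\varpi$, $\beta$, and $\sigma_{\min}$ are constants independent of $\mu$, this is exactly $K={\bf\it O}(1/\mu^{2})$, which is the claim. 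One should also remark that at index $K$ the algorithm has produced a genuine $\mu$-approximate Pareto critical point in the sense of the definition, and by Lemma~\ref{LmS3-02}~$(b)$ a $0$-approximate point is Pareto critical, so the statement is consistent with Theorem~\ref{Thm-01}.

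\textbf{Main obstacle.} There is no real analytic difficulty here — the proof is two lines of telescoping plus a contrapositive. The only points requiring care are bookkeeping: (i) making sure the decrease inequality is invoked for a single coordinate $i^{*}$ (the vector inequality $\preceq$ from Lemma~\ref{LmS-06666666}/Lemma~\ref{added11} must be read componentwise before summing, since one cannot sum $\preceq$ inequalities and keep a scalar bound otherwise), and (ii) correctly lower-bounding the telescoped tail by $\mathcal{V}_{i^{*}}^{inf}$ rather than by the possibly-smaller $\mathcal{V}_{i}^{*}$ for a wrong $i$. Both are handled exactly as in the proof of Lemma~\ref{LmS-09}. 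Everything else is immediate from Assumption~\ref{A-3} and Lemma~\ref{added11}.
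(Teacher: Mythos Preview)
Your proposal is correct and follows essentially the same argument as the paper: invoke Lemma~\ref{added11}, telescope the componentwise inequality at the index $i^{*}$ to obtain $\varpi\beta\sigma_{\min}\sum_{k=0}^{N}[\theta(x_{k})]^{2}\leq \mathcal{V}_{i^{*}}(x_{0})-\mathcal{V}_{i^{*}}^{inf}$, and then read off the iteration bound by a contrapositive on the first index $K$ with $|\theta(x_{K})|\leq\mu$. The paper's proof is identical in structure and in the constants obtained; your bookkeeping remarks (i) and (ii) are exactly the points the paper handles implicitly.
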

\begin{proof} 
For a nonnegative integer $N$, summing up the inequality of Lemma~\ref{added11} from $k=0$ to $k=N$ and using~\eqref{eq:v_inf}, we obtain
$$
\varpi\beta\sigma^2_{\min} \sum_{k=0}^{N}
\big[\theta(x_{k})\big]^{2}\leq\mathcal{V}_{i}(x_{0})-\mathcal{V}_{i}(x_{N+1})\leq\mathcal{V}_{i^{*}}(x_{0})-\mathcal{V}^{\mathrm{inf}},$$
and therefore, \begin{eqnarray}\label{lesttina}
\min\limits_{k=0,1,\ldots,N}|\theta(x_{k})|^{2}\leq\frac{\mathcal{V}_{i^{*}}(x_{0})-\mathcal{V}^{\mathrm{inf}}}{\varpi\beta\sigma^2_{\min} (N+1)}.
\end{eqnarray}	Since $\lim\limits_{k\rightarrow\infty}\theta(x_{k})=0$ from Lemma \ref{added11}, there exists $n_{\mu}\in\mathbb{N}$ such that $-\theta(x_{k})=|\theta(x_{k})|\leq \mu$ for all $k\geq n_{\mu}$. This implies that the set $\Delta_{\mu}:=\big\{k:\theta(x_{k})<-\mu\}=\big\{k:|\theta(x_{k})|>\mu\}$ is finite. 
	 If $\Delta_{\mu}=\emptyset$,  the result clearly holds. Let $\Delta_{\mu}\neq\emptyset$. Then, there exists $N_{\mu}$ such that
	 $|\theta(x_{k})|>\mu$ 
	 for all $k\in\{0,1,\ldots,N_{\mu}-1\}$  
but $|\theta(x_{N_{\mu}})|\leq\mu$ ($x_{N_{\mu}}$ is a $\mu$-approximate Pareto critical point of (\ref{prob})).  Hence, $\min\limits_{k=0,1,\ldots,(N_{\mu}-1)}|\theta(x_{k})|>\mu$, and applying (\ref{lesttina}) by taking $N=N_{\mu}-1$, we get
	\begin{eqnarray}
	N_{\mu}\mu^{2}<N_{\mu}\Big(\min\limits_{k=0,1,\ldots,(N_{\mu}-1)}|\theta(x_{k})|\Big)^{2}\nonumber&=& N_{\mu}\min\limits_{k=0,1,\ldots,(N_{\mu}-1)}|\theta(x_{k})|^{2}\nonumber\\&\leq&\frac{\mathcal{V}_{i^{*}}(x_{0})-\mathcal{V}^{\mathrm{inf}}}{\varpi\beta\sigma^2_{\min}}.\nonumber\end{eqnarray}	
	Therefore, the algorithm finds $x_{N_{\mu}}$ within $\frac{\mathcal{V}_{i^{*}}(x_{0})-\mathcal{V}^{\mathrm{inf}}}{\varpi\beta\sigma^2_{\min}\mu^{2}}$ iterations. \qed  
\end{proof}	

\begin{theorem}\label{thm22222}  Let $\{x_{k}\}_{k=0}^{\infty}$ and $\{y_{k}\}_{k=0}^{\infty}$ be sequences generated by Algorithm \ref{algorithm1}, and $\psi$, $\theta$, and $u_{0}$ be defined by (\ref{S3-001}),  (\ref{S3-002}), and (\ref{merit}), respectively. Then, the following statements hold.
\begin{description}
	\item[(a).]		$	-\sigma_{\min}u_{0}(x_{k})\geq\sigma_{\min}\theta(x_{k}) \geq\psi(x_{k},y_{k}) \mbox{ for all } k.
	$
	\item[(b).]	There exists $\{q_{k}\}_{k=0}^{\infty}$ such that 	$$u_{0}(x_{k+1})\leq q_{k} u_{0}(x_{k}) \mbox{ for all } k,$$		
where $\frac{1}{2}<q_{k}<1$  for all $k$ and $\lim\limits_{k\rightarrow\infty}q_{k}=1$.
\end{description}	
\end{theorem}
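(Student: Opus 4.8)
The plan is to establish part (a) first, since part (b) will depend on it. For (a), I would start from the definition of the merit function $u_{0}$ in (\ref{merit}) and the function $\theta$ in (\ref{S3-002}). By Assumption \ref{A-3}, $\theta(x_{k})$ is finite, so there is $z_{k}\in\Omega$ with $\theta(x_{k})=\psi(x_{k},z_{k})$. The key observation linking $\theta$ and $u_{0}$ is that for any $z\in\Omega$, convexity of each $f_{i}$ gives $\langle\nabla f_{i}(x_{k}),z-x_{k}\rangle\le f_{i}(z)-f_{i}(x_{k})$... but here the argument of $\psi$ is a \emph{direction}, not a point, so I would instead use $\psi(x_{k},y)$ with the substitution $y\leftarrow z-x_{k}$: indeed $\psi(x_{k},z-x_{k})=\max_i\{\langle\nabla f_i(x_k),z-x_k\rangle+g_i(z)-g_i(x_k)\}\le\max_i\{f_i(z)-f_i(x_k)+g_i(z)-g_i(x_k)\}=\max_i\{\mathcal V_i(z)-\mathcal V_i(x_k)\}$. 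Taking the infimum over $z\in\Omega$ on the left (which only decreases it to $\theta(x_k)$, noting $z-x_k$ ranges over a subset of admissible directions when $\Omega$ is translated) and relating the right-hand side to $-u_0(x_k)$ via $u_0(x_k)=\sup_{z}\min_i(\mathcal V_i(x_k)-\mathcal V_i(z))$ yields $\theta(x_k)\le -u_0(x_k)$. Multiplying by $\sigma_{\min}\in(0,1]$ and invoking the {\bf STEP 1} inequality together with Assumption \ref{A-3} (so that $\psi(x_k,y_k)\le\sigma_k\theta(x_k)\le\sigma_{\min}\theta(x_k)$, using $\theta(x_k)\le 0$ from Lemma \ref{LmS3-02}(b)) gives the chain in (a).

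For part (b), the plan is to use Lemma \ref{LmS-06666666}, which gives the per-iteration decrease $\mathcal V_i(x_k)-\mathcal V_i(x_{k+1})\ge\varpi\beta[\psi(x_k,y_k)]^2$ for every component $i$, hence also for the index achieving the sup/inf defining $u_0$. One shows $u_0(x_{k+1})\le u_0(x_k)-\varpi\beta[\psi(x_k,y_k)]^2$ by the standard argument: for any $z$, $\min_i(\mathcal V_i(x_{k+1})-\mathcal V_i(z))\le\min_i(\mathcal V_i(x_k)-\mathcal V_i(z))-\varpi\beta[\psi(x_k,y_k)]^2$ (subtracting a common quantity from each coordinate before taking $\min_i$), then taking $\sup_z$. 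Combining this with part (a), namely $[\psi(x_k,y_k)]^2\ge\sigma_{\min}^2[\theta(x_k)]^2\ge\sigma_{\min}^2 u_0(x_k)^2$, gives $u_0(x_{k+1})\le u_0(x_k)-\varpi\beta\sigma_{\min}^2 u_0(x_k)^2 = u_0(x_k)\bigl(1-\varpi\beta\sigma_{\min}^2 u_0(x_k)\bigr)$. Setting $q_k = 1-\varpi\beta\sigma_{\min}^2 u_0(x_k)$ gives the desired multiplicative recursion; since $\theta(x_k)\to 0$ (Lemma \ref{added11}) forces $u_0(x_k)\to 0$ (via part (a): $0\le u_0(x_k)\le-\theta(x_k)\to 0$), we get $q_k\to 1$, and $q_k<1$ whenever $x_k$ is not yet critical.

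The part that needs care is verifying $\tfrac12<q_k<1$ for \emph{all} $k$, i.e.\ $0<\varpi\beta\sigma_{\min}^2 u_0(x_k)<\tfrac12$. The upper bound $u_0(x_k)\le -\theta(x_k)$ together with an a priori bound on $|\theta(x_0)|$ (and monotonicity of $\{u_0(x_k)\}$, which follows from the decrease inequality just derived) would force $q_k>\tfrac12$ provided the constants are consistent; I expect the authors either to have a bound like $u_0(x_k)\le u_0(x_0)$ and $\varpi\beta\sigma_{\min}^2 u_0(x_0)<\tfrac12$ guaranteed by the parameter choices, or to restrict attention to $k$ large enough and note the early iterations are finitely many. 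This calibration of constants — ensuring the recursion constant genuinely lands in $(\tfrac12,1)$ rather than merely in $(0,1)$ — is the main obstacle; the rest is bookkeeping with the definitions of $\psi$, $\theta$, and $u_0$ and an application of Lemma \ref{LmS-06666666}.
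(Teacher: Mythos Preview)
Your plan for part~(a) is essentially the paper's argument: use convexity of each $f_i$ to bound $\psi(x_k,z-x_k)\le\max_i(\mathcal V_i(z)-\mathcal V_i(x_k))$, take the infimum over $z$ to reach $\theta(x_k)\le -u_0(x_k)$, then invoke Assumption~\ref{A-3} and $\theta(x_k)\le 0$ to get the chain. That part is fine.

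Part~(b), however, diverges from the paper, and the divergence is exactly where your ``main obstacle'' lies. You build $q_k$ from the \emph{quadratic} decrease in Lemma~\ref{LmS-06666666}, obtaining $q_k=1-\varpi\beta\sigma_{\min}^2 u_0(x_k)$. This is the inequality the paper uses later for Theorem~\ref{thm3333}, not here. With that choice the bound $q_k>\tfrac12$ would require $\varpi\beta\sigma_{\min}^2 u_0(x_k)<\tfrac12$, and there is no parameter condition in the algorithm guaranteeing this for all $k$ (your suggested fixes---bounding $u_0(x_0)$ a priori, or discarding finitely many iterates---are not supported by the hypotheses, and the second contradicts ``for all $k$'').

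The paper instead uses the \emph{linear} decrease from Lemma~\ref{LmS-05}(a), namely $\mathcal V_i(x_{k+1})-\mathcal V_i(x_k)\le\gamma\beta\,\psi(x_k,d_k)$, together with Lemma~\ref{LmS-044444} and the bounds $\varepsilon_k\le\bar\varepsilon$, $\|y_k\|\le\delta$ to obtain $\psi(x_k,d_k)\le\min\{1,\,t_k/(\bar\varepsilon\delta^2)\}\,\psi(x_k,y_k)$. Combining with part~(a) gives $\mathcal V_i(x_{k+1})\le\mathcal V_i(x_k)-\gamma\beta\sigma_{\min}\min\{1,\,t_k/(\bar\varepsilon\delta^2)\}\,u_0(x_k)$, and after the $\sup_y\min_i$ step one reads off
\[
q_k \;=\; 1-\gamma\beta\sigma_{\min}\min\Bigl\{1,\;\frac{t_k}{\bar\varepsilon\delta^2}\Bigr\}.
\]
Now the lower bound is automatic: $\gamma\le 1-2\beta<1$, $\beta<\tfrac12$, and $\sigma_{\min}\le 1$ force $\gamma\beta\sigma_{\min}<\tfrac12$, hence $q_k>\tfrac12$ regardless of $t_k$. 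The limit $q_k\to 1$ follows from $t_k=|\psi(x_k,y_k)|\to 0$ (Lemma~\ref{LmS-08}), and $q_k<1$ from $t_k\neq 0$ in the nontrivial case. The point is that the paper's $q_k$ depends on $t_k$, not on $u_0(x_k)$, which is what makes the uniform $\tfrac12$ bound fall out of the parameter constraints rather than requiring an extra hypothesis.
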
 
\begin{proof}
	Note that from the differentiability of $f_i$, we have
	$$	\mathcal{V}_{i}(y)-\mathcal{V}_{i}(x_{k})=f_{i}(y)-f_{i}(x_{k})+g_{i}(y)-g_{i}(x_{k})\geq\langle\nabla f_{i}(x_{k}),y-x_{k}\rangle +g_{i}(y)-g_{i}(x_{k})$$
	for all $k$, $y\in\Omega$, and $i=1,\ldots,m$. This yields
	\begin{eqnarray}	-\sigma_{\min}u_{0}(x_{k})\nonumber&=&\sigma_{\min}\inf_{y\in\Omega}\max_{i=1,\ldots,m}\big(\mathcal{V}_{i}(y)-\mathcal{V}_{i}(x_{k})\big)\nonumber\\&\geq&\sigma_{\min}\inf_{y\in\Omega}\max_{i=1,\ldots,m}\big(\langle\nabla f_{i}(x_{k}),y-x_{k}\rangle +g_{i}(y)-g_{i}(x_{k})\big)	\nonumber\\&=&\sigma_{\min}\min_{y\in\Omega}\psi(x_{k},y)\nonumber\\&=&\sigma_{\min}\theta(x_{k})\geq\sigma_{k}\theta(x_{k}) =\max\big\{\sigma_{k}\theta(x_{k}),-\alpha_{k}\big\}\geq\psi(x_{k},y_{k}),\nonumber
	\end{eqnarray}
	where the second equality holds from~\eqref{S3-001}, the second inequality follows from Lemma~\ref{LmS3-02} $(b)$, the last equality is true from Assumption \ref{A-3}, and the last inequality holds from Step~1 of the algorithm. Thus, we conclude that $(a)$ holds.\\
    \indent Now, let us prove $(b)$. Due to $\psi(x_{k},y_{k})\leq 0$ for all $k$ from Proposition \ref{LmS-04} $(a)$ and  $\frac{t_{k}}{\bar{\varepsilon}\delta^{2}}\leq\frac{t_{k}}{\varepsilon_{k}\|y_{k}-x_{k}\|^{2}}$
	from Lemma \ref{serakeb} and (\ref{eq121212}), it follows from Lemma \ref{LmS-044444} $(a)$ that
	\begin{eqnarray}\label{ellina212121}\psi(x_{k},x_{k}+d_{k})\nonumber&\leq&\min\Big\{1,\frac{t_{k}}{\varepsilon_{k}\|y_{k}-x_{k}\|^{2}}\Big\}\psi(x_{k},y_{k})\\&\leq&\min\Big\{1,\frac{t_{k}}{\bar{\varepsilon}\delta^{2}}\Big\}\psi(x_{k},y_{k})\end{eqnarray}
	for all $k$. Combining \eqref{ellina212121} with the fact $-\sigma_{\min}u_{0}(x_{k})\geq\psi(x_{k},y_{k})$ from  $(a)$, we obtain $\psi(x_{k},x_{k}+d_{k})\leq-\sigma_{\min}\min\{1,\frac{t_{k}}{\bar{\varepsilon}\delta^{2}}\}u_{0}(x_{k})$ for all $k$. Thus, from Lemma \ref{LmS-05} $(a)$, we have $\mathcal{V}_{i}(x_{k+1})\leq\mathcal{V}_{i}(x_{k})-\gamma\beta\sigma_{\min}\min\{1,\frac{t_{k}}{\bar{\varepsilon}\delta^{2}}\} u_{0}(x_{k})$ for all $k$ and $i=1,\ldots,m$. That is, \begin{eqnarray}\label{ellina272727}\mathcal{V}_{i}(x_{k+1})-\mathcal{V}_{i}(y)\leq\mathcal{V}_{i}(x_{k})-\mathcal{V}_{i}(y)-\gamma\beta\sigma_{\min}\min\Big\{1,\frac{t_{k}}{\bar{\varepsilon}\delta^{2}}\Big\} u_{0}(x_{k})\end{eqnarray}
	for all $k$,  $i=1,\ldots,m$, and $y\in\Omega$. Therefore, (\ref{ellina272727}) gives
	\begin{eqnarray}	\nonumber&&\sup_{y\in\Omega}\min_{i=1,\ldots,m}\big(\mathcal{V}_{i}(x_{k+1})-\mathcal{V}_{i}(y))\nonumber\\&&\hspace{20mm}\leq\sup_{y\in\Omega}\min_{i=1,\ldots,m}\big(\mathcal{V}_{i}(x_{k})-\mathcal{V}_{i}(y))-\gamma\beta\sigma_{\min}\min\Big\{1,\frac{t_{k}}{\bar{\varepsilon}\delta^{2}}\Big\} u_{0}(x_{k})\nonumber\end{eqnarray}
	for all $k$, which by definition of $u_{0}$ it means that	  $u_{0}(x_{k+1})\leq q_{k} u_{0}(x_{k})$, where $q_{k}:=1-\gamma\beta\sigma_{\min}\min\{1,\frac{t_{k}}{\bar{\varepsilon}\delta^{2}}\}$. In view of $t_{k}=|\psi(x_{k},y_{k})|\neq0$, $\lim\limits_{k\rightarrow\infty}t_{k}=\lim\limits_{k\rightarrow\infty}|\psi(x_{k},y_{k})|=0$ from Lemma \ref{LmS-08}, the definition $\gamma$ given in~\eqref{eq:gamma}, and the values of $\beta$ and $\sigma_{\min}$ in the algorithm, we can see that $0<\gamma\beta\sigma_{\min}\min\{1,\frac{t_{k}}{\bar{\varepsilon}\delta^{2}}\}< \frac{1}{2}$, that is, $\frac{1}{2}<q_{k}<1$ for all $k$ and $\lim\limits_{k\rightarrow\infty}q_{k}=1$. \qed
\end{proof}

\begin{lemma} \cite[Chapter 2, Lemma 6]{16}\label{lem-yetu}
	If $\xi>0$ and $\{\varphi_{k}\}$ is a sequence of non-negative real numbers with
	$	\xi\varphi_{k}^{2}\leq\varphi_{k}-\varphi_{k+1},$ then 
	$\varphi_{k}\leq \frac{\varphi_{0}}{1+\xi\varphi_{0}k}$ for all  $k$.
\end{lemma}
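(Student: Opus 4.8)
The plan is to reduce the recursive inequality to a telescoping estimate for the reciprocals $1/\varphi_k$. First I would dispose of the degenerate case: since $\xi\varphi_k^2\ge 0$, the hypothesis gives $\varphi_{k+1}\le\varphi_k$, so $\{\varphi_k\}$ is non-increasing. If $\varphi_0=0$, then $0\le\varphi_k\le\varphi_0=0$ forces $\varphi_k=0$ for all $k$ and the claimed bound holds trivially (with the convention that the right-hand side is $0$). More generally, if $\varphi_{k_0}=0$ for some $k_0$, then $\varphi_k=0$ for all $k\ge k_0$ and again there is nothing to prove past that index; so it suffices to treat indices where $\varphi_k>0$, and I would henceforth assume $\varphi_k>0$ for all $k$.

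Next I would divide the inequality $\xi\varphi_k^2\le\varphi_k-\varphi_{k+1}$ by the positive quantity $\varphi_k\varphi_{k+1}$, obtaining
\begin{eqnarray}
\xi\,\frac{\varphi_k}{\varphi_{k+1}}\le\frac{\varphi_k-\varphi_{k+1}}{\varphi_k\varphi_{k+1}}=\frac{1}{\varphi_{k+1}}-\frac{1}{\varphi_k}.\nonumber
\end{eqnarray}
Since $\varphi_{k+1}\le\varphi_k$, the ratio $\varphi_k/\varphi_{k+1}\ge 1$, so the left-hand side is at least $\xi$, giving the clean one-step bound $\frac{1}{\varphi_{k+1}}-\frac{1}{\varphi_k}\ge\xi$ for every $k$.

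Finally I would sum this telescoping inequality from $0$ to $k-1$ to get $\frac{1}{\varphi_k}-\frac{1}{\varphi_0}\ge\xi k$, i.e. $\frac{1}{\varphi_k}\ge\frac{1+\xi\varphi_0 k}{\varphi_0}$, and invert to conclude $\varphi_k\le\frac{\varphi_0}{1+\xi\varphi_0 k}$. The only subtlety — the ``hard part,'' such as it is — is the bookkeeping around possible zero terms, so that the division step is legitimate; once the sequence is known to be non-increasing and the zero case is handled separately, the rest is the standard reciprocal-telescoping argument and is purely mechanical.
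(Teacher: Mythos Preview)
Your argument is correct and is precisely the standard reciprocal-telescoping proof of this classical estimate. Note that the paper does not supply its own proof of this lemma; it simply cites \cite[Chapter~2, Lemma~6]{16}, so there is no in-paper argument to compare against, but your proof is exactly what one finds in that reference.
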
	
In the following theorem, we will show the $\{u_{0}(x_{k})\}$ converges sublinearly to zero with a rate ${ {\bf\it O}}(1/k)$, where $u_{0}$ is the merit function  defined by (\ref{merit}).
\begin{theorem}\label{thm3333} (Sublinear rate of convergence)	Let $\{x_{k}\}_{k=0}^{\infty}$ be the sequence generated by Algorithm \ref{algorithm1} and $u_{0}$ be defined by (\ref{merit}). If there exists  $x^{*}\in\Omega$ such that $\mathcal{V}(x^{*})\preceq\mathcal{V}(x_{k})$ for all $k$, then 
	$$u_{0}(x_{k})<\frac{1}{\varpi\beta\sigma_{\min}^{2} k} \mbox{ for all } k\in\mathbb{N}.$$
\end{theorem}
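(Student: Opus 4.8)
The plan is to convert the per-component decrease of $\mathcal{V}$ along the iterates into a quadratic recursion for the merit-function values $\{u_0(x_k)\}$ and then invoke Lemma~\ref{lem-yetu}.

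First I would record a sharp one-step estimate. By Theorem~\ref{thm22222}(a) we have $\psi(x_k,y_k)\le\sigma_{\min}\theta(x_k)\le-\sigma_{\min}u_0(x_k)$; since $\psi(x_k,y_k)<0$ in the nontrivial (non-terminating) case while $u_0(x_k)\ge 0$ by Lemma~\ref{LmS3-02}(c), this gives $|\psi(x_k,y_k)|\ge\sigma_{\min}u_0(x_k)\ge 0$, hence $[\psi(x_k,y_k)]^2\ge\sigma_{\min}^2[u_0(x_k)]^2$. Substituting this into Lemma~\ref{LmS-06666666} yields, for every $i=1,\dots,m$ and every $k$,
\[
\mathcal{V}_i(x_k)-\mathcal{V}_i(x_{k+1})\ \ge\ \varpi\beta[\psi(x_k,y_k)]^2\ \ge\ \varpi\beta\sigma_{\min}^2[u_0(x_k)]^2 .
\]

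Next I would transfer this bound to $u_0$ itself. Writing $c_k:=\varpi\beta\sigma_{\min}^2[u_0(x_k)]^2\ge 0$, the inequality above says $\mathcal{V}_i(x_{k+1})\le\mathcal{V}_i(x_k)-c_k$ for all $i$, so for every fixed $y\in\Omega$ one has $\min_i(\mathcal{V}_i(x_{k+1})-\mathcal{V}_i(y))\le\min_i(\mathcal{V}_i(x_k)-\mathcal{V}_i(y))-c_k$. Taking the supremum over $y\in\Omega$ — which is legitimate because $u_0(x_k)$ is finite (under the hypothesis $\mathcal{V}(x^{*})\preceq\mathcal{V}(x_k)$, together with Lemma~\ref{LmS3-02}(c), the sequence $\{u_0(x_k)\}$ consists of finite nonnegative reals) — gives $u_0(x_{k+1})\le u_0(x_k)-c_k$, that is, $\varpi\beta\sigma_{\min}^2[u_0(x_k)]^2\le u_0(x_k)-u_0(x_{k+1})$ for all $k$.

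Finally I would apply Lemma~\ref{lem-yetu} with $\varphi_k=u_0(x_k)$ and $\xi=\varpi\beta\sigma_{\min}^2>0$, obtaining $u_0(x_k)\le u_0(x_0)/(1+\varpi\beta\sigma_{\min}^2 u_0(x_0)k)$. Since the sequence is infinite, no $x_k$ is Pareto critical, so $\psi(x_k,y_k)\ne 0$ for all $k$ and hence $u_0(x_0)>0$ by Lemmas~\ref{LmS-04}(b) and~\ref{LmS3-02}(c); then $1+\varpi\beta\sigma_{\min}^2 u_0(x_0)k>\varpi\beta\sigma_{\min}^2 u_0(x_0)k$ gives the claimed strict bound $u_0(x_k)<1/(\varpi\beta\sigma_{\min}^2 k)$ for every $k\ge 1$ (if $u_0(x_0)=0$ then $x_0$ is already weakly Pareto optimal and the bound is trivial). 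The only delicate point is the transfer step: exchanging $\sup_y$ with subtraction of the constant $c_k$ requires knowing beforehand that $u_0(x_k)$ is finite, which is precisely what the hypothesis on $x^{*}$ (or, implicitly, the boundedness below of each $\mathcal{V}_i$ on $\Omega$ tacitly used throughout Section~\ref{Sec4}) secures; the rest is a straightforward chaining of Theorem~\ref{thm22222}(a), Lemma~\ref{LmS-06666666}, and Lemma~\ref{lem-yetu}.
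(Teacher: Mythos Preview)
Your proposal is correct and follows essentially the same route as the paper: combine Theorem~\ref{thm22222}(a) with Lemma~\ref{LmS-06666666} to obtain the quadratic recursion $\varpi\beta\sigma_{\min}^{2}[u_{0}(x_{k})]^{2}\le u_{0}(x_{k})-u_{0}(x_{k+1})$, then invoke Lemma~\ref{lem-yetu}. Your write-up is in fact a bit more careful than the paper's, since you explicitly address the finiteness of $u_{0}(x_{k})$ needed for the $\sup$-transfer step and the positivity of $u_{0}(x_{0})$ needed to turn the Lemma~\ref{lem-yetu} bound into a strict inequality.
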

\begin{proof} 
	From the existence of a point $x^{*}\in\Omega$ with $\mathcal{V}(x^{*})-\mathcal{V}(x_{k})\preceq 0$ for all $k$ (see Lemma~\ref{LmS-05}), we have 
    \begin{eqnarray}	0\geq\sigma_{\min}\max\limits_{i=1,\ldots,m}\big(\mathcal{V}_{i}(x^{*})-\mathcal{V}_{i}(x_{k})\big)\nonumber&\geq&
	\sigma_{\min}\inf\limits_{y\in\Omega}\max\limits_{i=1,\ldots,m}\big(\mathcal{V}_{i}(y)-\mathcal{V}_{i}(x_{k})\big)\nonumber\\&=&-\sigma_{\min}u_{0}(x_{k}).\nonumber \end{eqnarray}
    Hence, by $-\sigma_{\min}u_{0}(x_{k})\geq\psi(x_{k},y_{k})$ from Theorem \ref{thm22222} $(a)$, we obtain 
	\begin{eqnarray}\label{pommy192127}
	0\leq\sigma_{\min}^{2}u_{0}^{2}(x_{k})\leq [\psi(x_{k},y_{k})]^{2}
	\end{eqnarray}
	for all $k$. Lemma \ref{LmS-06666666} and (\ref{pommy192127}) implies
	$$\mathcal{V}_{i}(x_{k+1})-\mathcal{V}_{i}(y)\leq\mathcal{V}_{i}(x_{k})-\mathcal{V}_{i}(y)-\varpi\beta \sigma_{\min}^{2}u_{0}^{2}(x_{k})$$
	for all $k$,  $i=1,\ldots,m$, and $y\in\Omega$, and thus, taking $\sup\limits_{y\in\Omega}\min\limits_{i=1,\ldots,m}$ in both sides and using the definition of $u_{0}$, we have \begin{eqnarray}\label{yetumist21212121}u_{0}(x_{k+1})\leq u_{0}(x_{k})-\varpi\beta \sigma_{\min}^{2}u_{0}^{2}(x_{k})\end{eqnarray} for all $k$. Letting $\varphi_{k}:=u_{0}(x_{k})$, the inequality  (\ref{yetumist21212121}) is rewritten as $	\varpi\beta\sigma_{\min}^{2}\varphi_{k}^{2}\leq\varphi_{k}-\varphi_{k+1}$ for all $k$. Therefore, applying Lemma \ref{lem-yetu}, we get
	$\varphi_{k}\leq\frac{\varphi_{0}}{1+\varpi\beta\sigma_{\min}^{2}\varphi_{0}k}$ for all $k$, and hence, $\varphi_{k}<\frac{1}{\varpi\beta\sigma_{\min}^{2} k}$ for all $k\in\mathbb{N}$. 
	This completes the proof.	\qed
\end{proof}
\section{Numerical Experiments}\label{Sec5}
The purpose of this section is to carry out some
numerical experiments to verify and demonstrate the performance of Algorithm \ref{algorithm1} ({\bf A-GCG}) in comparison to the line search-based form of the generalized conditional gradient
method ({\bf GCG}) in \cite{2},  proximal gradient method ({\bf PG}) in \cite{4aa}, and the conditional gradient method ({\bf CG}) in \cite{1}.
There are several types of problems of the form (\ref{prob}) considered in the literature (theoretical \cite{13,4aa} and practical \cite{18,17} problems, and see the list of test problems in \cite{2,22eliant}). We compare {\bf A-GCG} with {\bf GCG}, {\bf PG}, and {\bf CG} for the test problems in Example \ref{exam1},  \ref{exam2}, and \ref{exam3} in reference to satisfying the Stopping Criteria: \begin{eqnarray}\label{stop1}
\frac{|\theta(x_{k})|}{\max\{\frac{1}{n}\|x_0\|,1\}}\leq\mu.	\end{eqnarray}
This means that the algorithms stop for the smallest $k$ such that the point $x_{k}$ is $\mu \max\{\frac{1}{n}\|x_0\|,1\}$-approximate Pareto optimal of the multiobjective optimization considered. 
\begin{itemize}
\item[$\bullet$] The tables present the approximate number of iterations (\#Iter),  CPU time of execution in seconds (Cput), number of computations of  $\mathcal{V}$ (\#Fc), and number of gradient computations performed  (\#$\nabla$) required to attain \eqref{stop1}. The values \#Iter, Cput, \#Fc, and \#$\nabla$ are the approximated average value of three running outputs.
\item[$\bullet$] The figures show the path $(\mathcal{V}_{1}(x_{k}),\ldots, \mathcal{V}_{m}(x_{k}))$ and the approximate Pareto optimal points obtained by the algorithms using multiple starting points. 
\item[$\bullet$] {\bf A-GCG} is implemented by setting $\varepsilon_{0}=0.01$ and $\beta=0.2$ and selecting the parameters $\alpha_{k}$ and $\sigma_{k}$ such that $0<\alpha_{\min}=10^{2}\leq \alpha_{k}$ and $0<\sigma_{\min}=10^{-2}\leq \sigma_{k}\leq 1$. In {\bf STEP 1} of {\bf A-GCG} we set  $y_{k}=x_{k}+\sigma(z_{k}-x_{k})$ where $z_{k}\in\Omega$ and
$\psi(x_{k},z_{k})=\theta(x_{k})$, and $\sigma$ is randomly chosen number with $\sigma_{\min}\leq\sigma_{k}\leq\sigma\leq 1$.
\item[$\bullet$] The subproblem \eqref{S3-002} can be reformulated as an equivalent problem of finding the points $(\Gamma,y)\in\mathbb{R}\times\mathbb{R}^{n}$ given by
\begin{align}
&\min_{\Gamma,y} \Gamma\label{cons1}\\&  
  \mbox{s.t}\hspace{2mm} \langle\nabla f_{i}(x),y-x\rangle +g_{i}(y)-g_{i}(x)\leq \Gamma, \hspace{3mm} \forall i=1,\ldots,m\label{cons2}\\&\hspace{6mm}
  y\in\Omega.
\end{align} 
\end{itemize} 

All numerical experiments are implemented in MATLAB R2024a and executed on a MacBook Air with 8GB.  We use the linear programming command ({\it linprog}) when applying {\bf A-GCG}, {\bf GCG}, and {\bf CG}, and the quadratic programming ({\it quadprog}) when applying {\bf PG}. 
\begin{example}\label{exam1} 
	Consider the multiobjective optimization  given by
\begin{eqnarray}\label{ex3333}
\nonumber&&\min \: \mathcal{V}(x):=\Big(\frac{1}{n}\|x-a_1\|^{2},\ldots,\frac{1}{n}\|x-a_m\|^{2}\Big)\\&&\hspace{1mm}\mbox{s.t. } \: x\in [-\ell,\ell]^n.
	\end{eqnarray}
 where $a_1,a_2\in\mathbb{R}^n$ and $\ell>0$.
\end{example}
The problem (\ref{ex3333}) can be reformulated as (\ref{prob}), where $\mathcal{V}_{i}(x):=\frac{1}{n}\|x-a_i\|^{2}=f_{i}(x)+g_{i}(x)$ for 
$f_{i}(x)=\frac{1}{n}\|x-a_i\|^{2}$ and $g_{i}(x)=0$  for $i=1,\ldots,
m$. We take $a_1$ and $a_2$ to be a randomly generated vectors in  $[0,1]^n$. 
The comparative performance of {\bf A-GCG}, {\bf GCG}, {\bf PG}, and {\bf CG} in the implementation for (\ref{ex3333}) for different $m$, $\ell$, and $n$ value is shown in Figures \ref{Figure1} and \ref{Figure2}, and Table \ref{Table1}. \vspace{1mm}  
\begin{figure}[h] 
	\centering
	\includegraphics[scale=0.31]{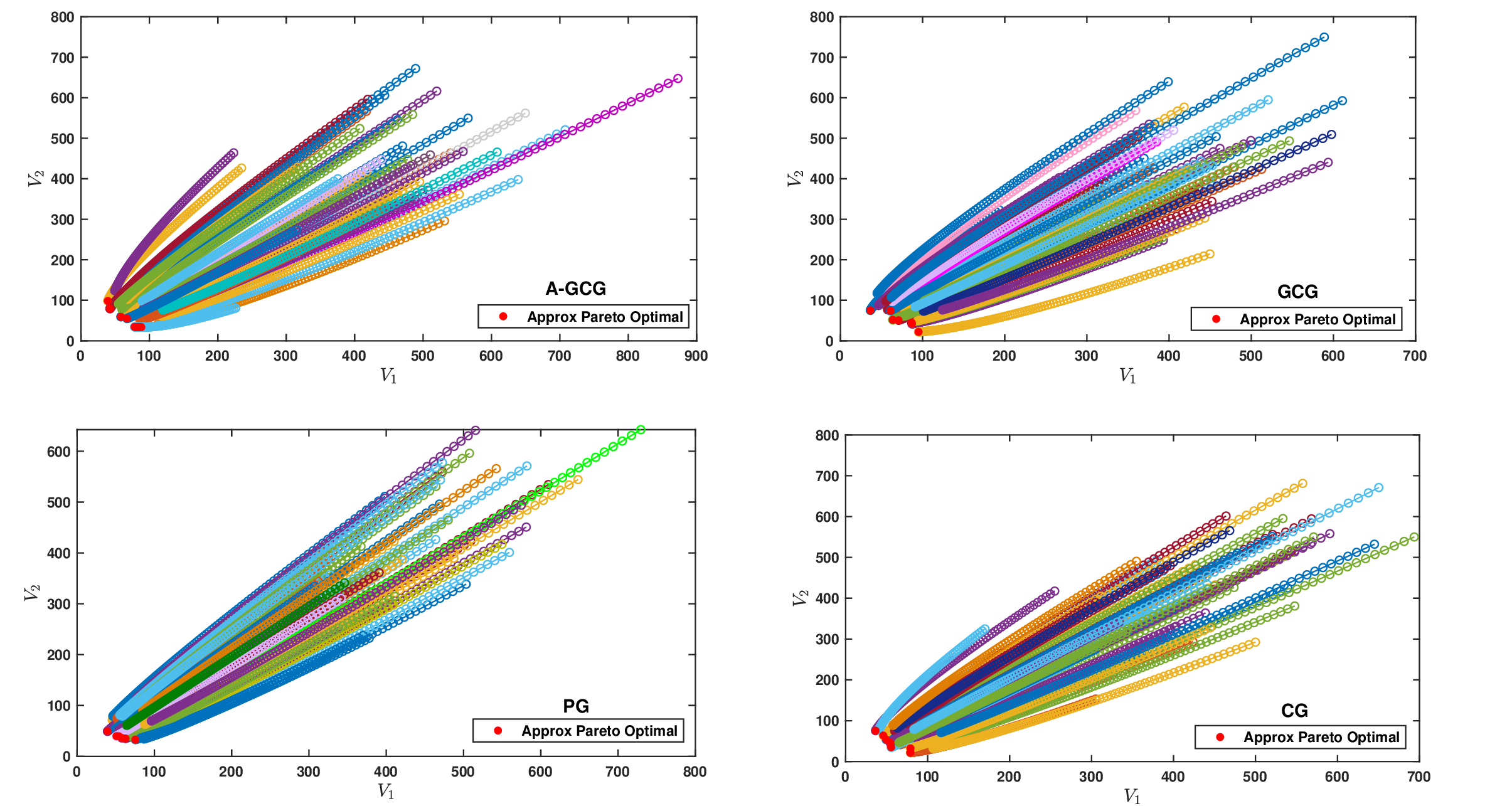}
 \caption{Iterative path and approximate Pareto optimal obtained for Example \ref{exam1} ($m=2$, $\ell=10$, and $n=15$) using 100 different starting points  in $[-10,10]^n$ and $\mu=10^{-3}$.} 
 \label{Figure1}
\end{figure}
\begin{figure}[h] 
	\centering
	\includegraphics[scale=0.32]{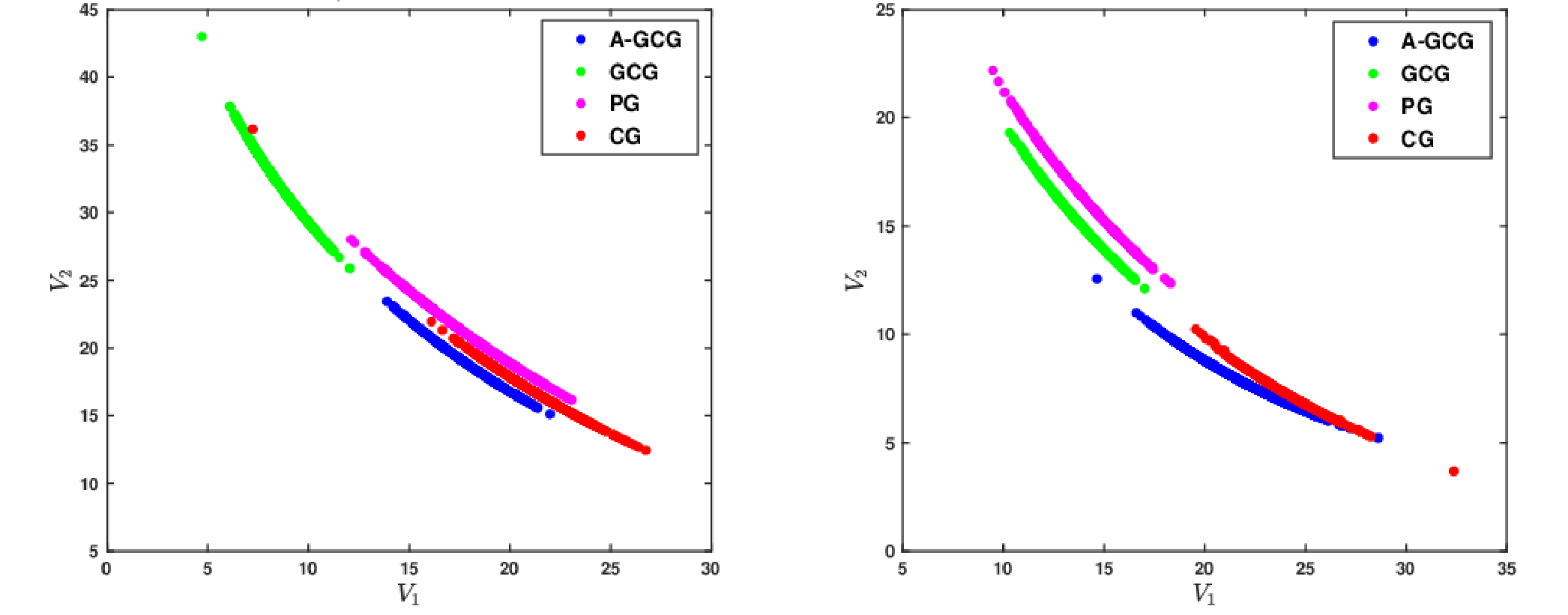} 	
	\caption{Approximate Pareto optimal  obtained for Example \ref{exam1} ($m=2$, $\ell=100$, and $n=20$) using 200 different starting points in $[-100,100]^n$, and $\mu=10^{-3}$ (left side figure) and $\mu=10^{-4}$ (right side figure).}\label{Figure2}
\end{figure}
\begin{table}[]
\centering
	\caption{Example \ref{exam1} for $x_0=2(1,\dots,1)$, $n=25$, and $\mu=10^{-3}$.}\label{Table1}
\begin{tabular}{lllllllllll}
\hline
 & & \multicolumn{4}{l}{$\ell=10$} &  & \multicolumn{4}{l}{$\ell=100$} \\ \cline{3-6} \cline{8-11} 
Solver  & $m$ & $\#$Iter    &  CPU   &  $\#$Fc   &  $\#$$\nabla$  &  &    $\#$Iter.    &  CPU   &  $\#$Fc   &  $\#$$\nabla$    \\ \hline
A-GCG & 2 &   16 & 0.04834   & 172   &  89  &  &   17  &   0.05018 &  178   &  89 \\
GCG &  2 &  27 &  0.04246   &  206   &   94 &  & 27   &   0.04067  &  209   & 98   \\
PG & 2  & 18  &  0.04284   &  179   &   88 &  &  26   &  0.04220   &   183  & 90   \\
CG & 2  &  25 &   0.03017 &   216  &   99 &  &  22   &  0.04173   &  214   &  103  \\ 
& &   &     &     &    &  &     &     &     &    \\ 
A-GCG & 3 & 44 & 0.04601       &   343  &  159  &  &  51  &   0.05319&  387   &  186  \\
GCG & 3  &  57 &   0.04016 &  397   &  184  &  & 72 &  0.04800     &  414   &  198  \\
PG & 3  & 44  &   0.06446  &  417   &  203  &  & 81    &  0.05264   & 484    & 226   \\
CG & 3  &  48 &   0.04083 &  392   &  173  &  & 74  & 0.03348       &  393   &  182  \\ \hline
\end{tabular}
\end{table}

\begin{example} \label{exam2} 
	Consider the multiobjective optimization  given by
		\begin{eqnarray}\label{ex2}
		\nonumber&&\min \: \mathcal{V}(x):=\big(\mathcal{V}_{1}(x),\mathcal{V}_{2}(x),\mathcal{V}_{3}(x)\big)\\&&\hspace{1mm}\mbox{s.t. } \: x\in [-\ell,\ell]^n,
	\end{eqnarray}
 where 
 $$\mathcal{V}_{1}(x)=\frac{1}{n^2}\sum_{j=1}^{n}i(x_i-4)^4,\hspace{5mm} \mathcal{V}_{2}(x)=\exp{\Big(\sum_{j=1}^{n}\frac{x_i}{n}\Big)}+\|x\|^2_2, $$
$$\mathcal{V}_{3}(x)=\frac{1}{n(n+1)}\sum_{j=1}^{n}i(n-i+1)\exp{(-x_i)},$$
and $\ell>0$.
\end{example}
The problem (\ref{ex2}) can be reformulated as (\ref{prob}) where $\mathcal{V}_{i}=f_i+g_i$ for $f_1(x)=\mathcal{V}_{1}(x)$, $g_1(x)=0$, $f_2(x)=\exp{\big(\sum_{j=1}^{n}\frac{x_i}{n}\big)}$, $g_2(x)=\|x\|^2_2$, $f_3(x)=\mathcal{V}_{3}(x)$, $g_3(x)=0$. The comparative performance of {\bf A-GCG}, {\bf GCG}, {\bf PG}, and {\bf CG} in the implementation for (\ref{ex3333}) for different $\ell$ and $n$ value is shown in Figures \ref{Figure3} and Table \ref{Table2}.

\begin{figure}[htb]
\centering
  \begin{tabular}{@{}cccc@{}}
\begin{tabular}{@{}cccc@{}}
\hspace{-2mm}\includegraphics[width=.55\textwidth]{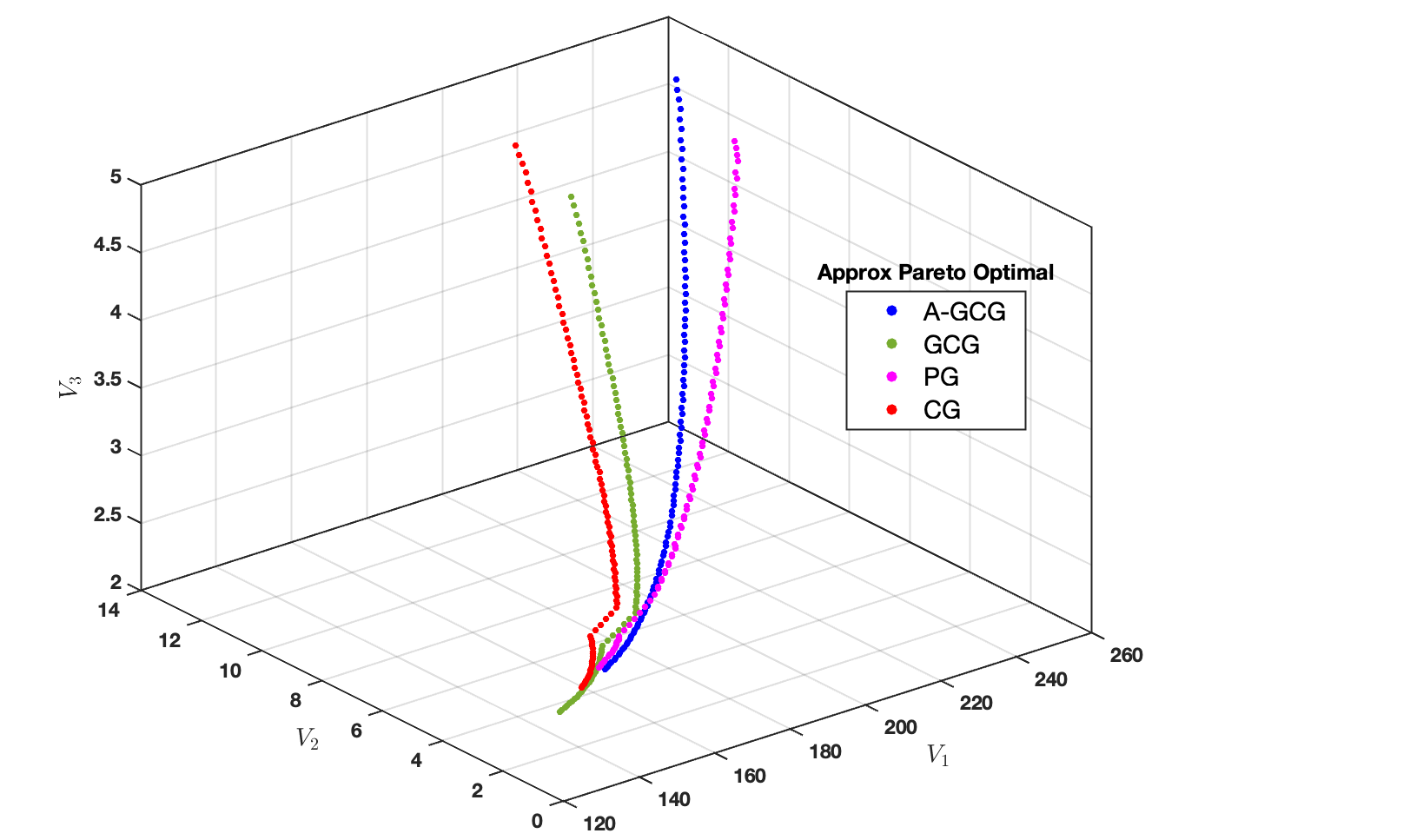} 
\end{tabular}
&\hspace{-16mm}
\begin{tabular}{@{}cccc@{}}
\includegraphics[width=.55\textwidth]{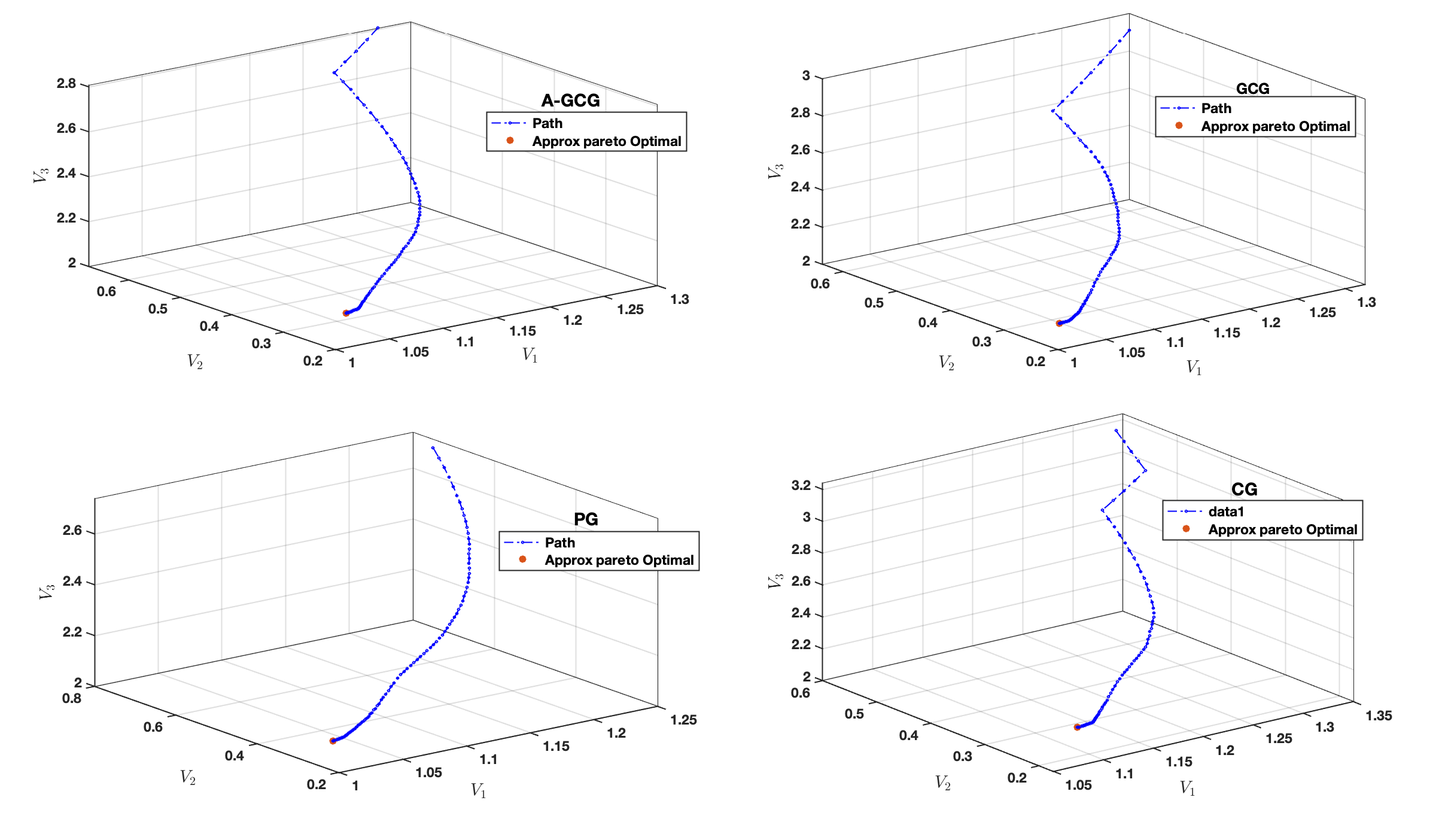} 
\end{tabular}
  \end{tabular}
  \caption{Approximate Pareto optimal and iterative path for Example \ref{exam2} for $n=50$, $\ell=10$, and for the same starting point for each method, and $\mu=10^{-2}$.}\label{Figure3}
\end{figure}

\begin{table}[]
\caption{Example \ref{exam2} for $x_0=2(1,\dots,1)$, $n=50$, and $\mu=10^{-3}$.}\label{Table2}
\begin{tabular}{llllllllll}
\hline
 & \multicolumn{4}{l}{$\ell=10$} &  & \multicolumn{4}{l}{$\ell=100$} \\ \cline{2-5} \cline{7-10} 
Solver & $\#$Iter    &  CPU   &  $\#$Fc   &  $\#$$\nabla$  &  &    $\#$Iter    &  CPU   &  $\#$Fc   &  $\#$$\nabla$    \\ \hline
A-GCG & 41    &   0.08124 & 311 & 159  &  &   27  &   0.12255  &   345  &  175  \\
GCG &   49  &  0.10591   &  375   &  154  &  &   27  &   0.08768  &   489  &  222  \\
PG &  64   &    0.07443 & 408    &  193  &  &    71 &    0.13059 &   576  &  234  \\
CG &  42   &  0.06412 &   360  &  161  &  &   29  &   0.09036  &   411  &  189  \\ \hline
\end{tabular}
\end{table}
\begin{example}\label{exam3}  Consider the
constrained multiobjective optimization problem that appears in portfolio optimization \cite{19qq}:
	\begin{eqnarray}\label{ex3}
	\nonumber&&\min \mathcal{V}(x)=\Big(x^{\top}
	b,x^{\top}Ax\Big)\\&&\hspace{1mm}\mbox{s.t. }  x=(x_{1},\ldots,x_{n})\in\mathbb{R}_{+}^{n}\\&&\hspace{6mm}\sum_{r=1}^{n}x_{r}=1,\nonumber
	\end{eqnarray}
	where $A$ is symmetric positive semidefinite $n\times n$ matrix, and $b\in\mathbb{R}^{n}$. \end{example}   
The problem (\ref{ex3}) can be reformulated as (\ref{prob}) where  $\mathcal{V}_{i}=f_{i}+g_{i}$ for 
$f_{1}(x)=x^{\top}b$,  $f_{2}(x)=x^{\top}Ax$, and 
$g_{i}(x)=\delta_{C\cap\mathbb{R}_{+}^{n}}(x)$ for $i=1,2$, with $\delta_{C\cap\mathbb{R}_{+}^{n}}$ being the indicator function of $C\cap\mathbb{R}_{+}^{n}$ and $C=\{(x_{1},\ldots,x_{n})\in\mathbb{R}_{+}^{n}:\sum_{r=1}^{n}x_{r}=1\}$. 
In our experiments, we use a real data type that occurs in portfolio optimization studies, as seen in \cite{19qq}, and hence we take a symmetric positive semidefinite $n\times n$ matrix $A=[a_{lj}]_{n\times n}$ and a vector $b$ in $\mathbb{R}^{n}$ where $-1\leq a_{lj}\leq 1$ for all $l,j=1,\ldots,n$ and $-2e_{n}\preceq b\preceq 2e_{n}$. Figures~\ref{Figure4} and~\ref{Figure5}, and Table~\ref{Table3} show the achieved computational results of the three algorithms {\bf A-GCG}, {\bf GCG}, and {\bf PG} applied on (\ref{ex3}). 
 \begin{figure}[h] 
	\centering
	\hspace{-1.5mm}\includegraphics[scale=0.30]{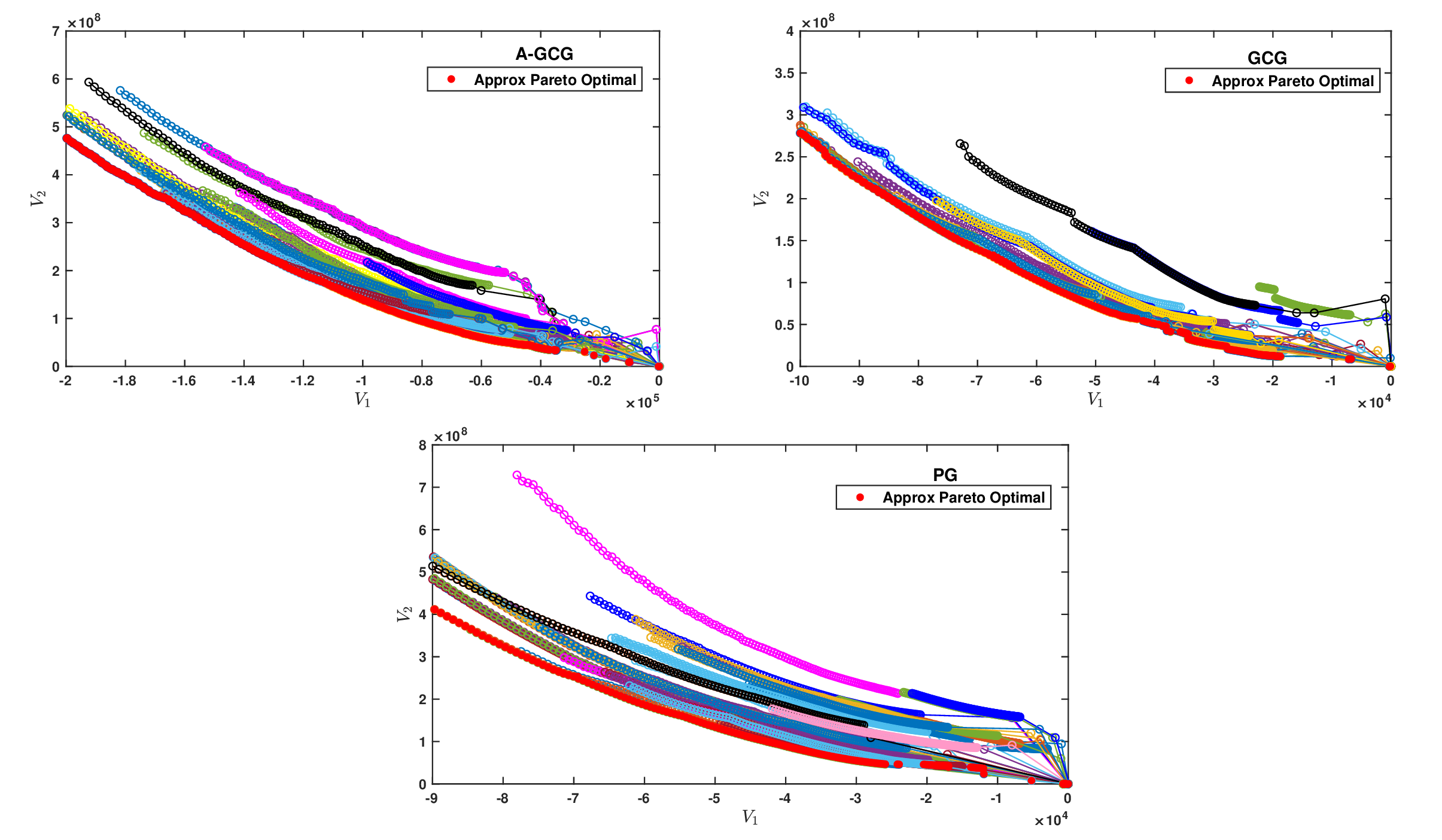} 	
	
	\caption{Iterative path and approximate Pareto optimal obtained for Example \ref{exam3} (for $n=50$) using  200 different starting points in $[-10,10]^n$ and $\mu=10^{-3}$.} \label{Figure4}
\end{figure}
\begin{figure}[htb]
\centering
  \begin{tabular}{@{}cccc@{}}
\hspace{-7mm}\includegraphics[width=.88\textwidth]{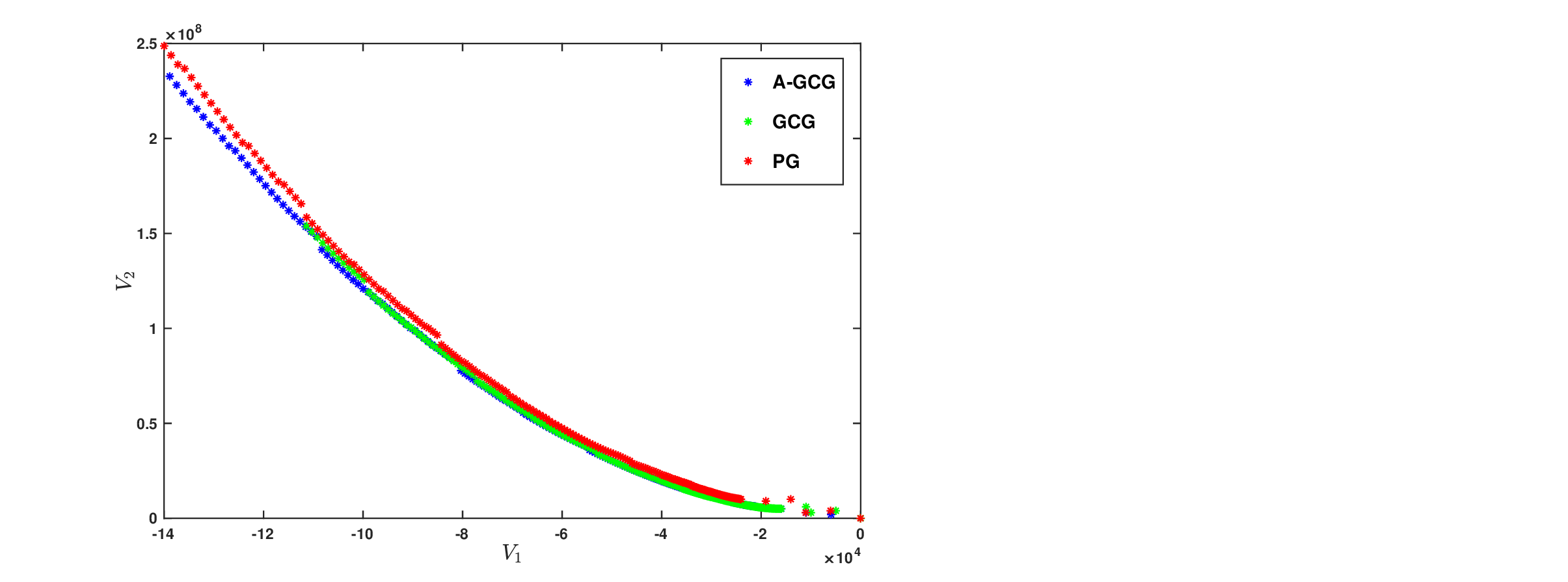} &\hspace{-45mm}
\includegraphics[width=.88\textwidth]{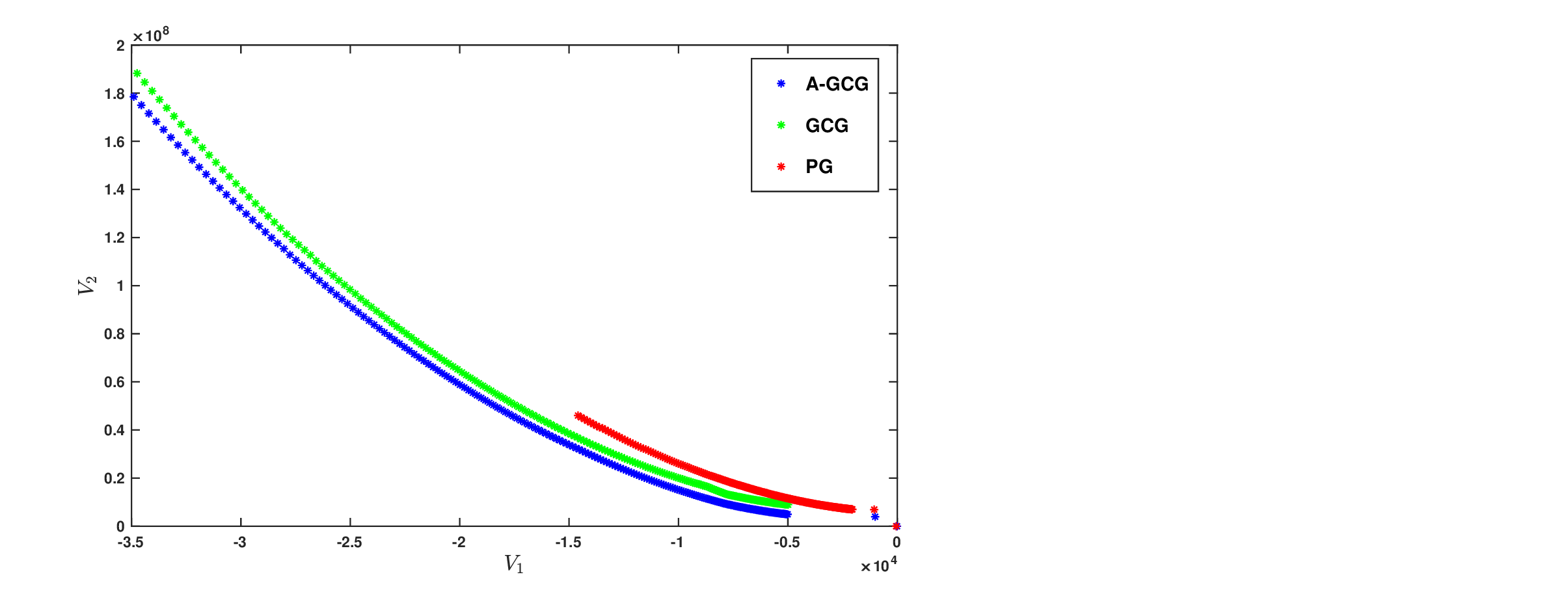} 
    \end{tabular}
  \caption{Iterative path and approximate Pareto optimal obtained for Example \ref{exam3} (for $n=50$) using  200 different starting points in $[-100,100]^n$ and $\mu=10^{-4}$.}\label{Figure5}
\end{figure}

\begin{table}
\caption{Example \ref{exam3} for $x_0=2(1,\dots,1)$, $n=20$, and $\mu=10^{-3}$.}\label{Table3}
\begin{tabular}{llllllllll}
\hline
 & \multicolumn{4}{l}{$n=2$} &  & \multicolumn{4}{l}{$n=10$} \\ \cline{2-5} \cline{7-10} 
 Solver& $\#$Iter    &  Cput   &  $\#$Fc   &  $\#$$\nabla$  &  &    $\#$Iter    &  Cput   &  $\#$Fc   &  $\#$$\nabla$    \\ \hline
A-GCG & 9    &  0.05782&   193  &  103  &  & 16   &  0.07699   &   278  &   128 \\
GCG &  21   &  0.03280   &  321   &  146  &  &    18 &   0.05407 &   316  &  159  \\
PG & 17   &  0.05719   &  379   &  163  &  & 25    &   0.06593 &     392 & 172 \\
CG &  23  &  0.05182   & 309    &  135  &  &    18 &  0.05693   & 372    &  162  \\ \hline
\end{tabular}
\end{table}
The numerical results in the three examples confirm that our algorithm ({\bf A-GCG}) outperforms {\bf GCG}, {\bf PG} and the {\bf CG} in terms of taking less average number of iterations, less average functional value computations, gradient computations, and in finding the $\mu$-approximate Pareto critical point with small number of iterations. However, here our method requires slightly more CPU time than the four algorithms to attain the required stopping criteria. The main reason could be our method included more steps and computations involving both the adaptive and line search procedures, as well as parameters, which allowed us to achieve a better result under generalized assumptions.  We also can see that our algorithm extracted a competently close approximated Pareto optimal points. Computational experiments in Examples~\ref{exam1}, \ref{exam2}, and~\ref{exam3} demonstrated the efficiency, ease of implementation, and suitability of our proposed method for handling constrained multiobjective optimizations.

\section*{Conclusion} 
We suggested a new generalized conditional gradient method that treats the special class of constrained multiobjective optimization problems. The method allows the use of adaptive and line search procedures to produce parameters, which improves the implementation of the method. 
 Detailed computational, iteration complexity, and convergence
  analysis of the proposed method is given under reasonable assumptions, showing that every limit point of the sequence  generated by the algorithm is a Pareto critical
  point, the algorithm generates the first $\mu$-approximate Pareto critical point within
   ${\bf\it {O}}(1/\mu^{2})$ iterations (for $\mu>0$), 
  and the sublinear convergence maintained the best achievable rate ${\bf\it {O}}(1/k)$ ($k$ is number of iterations). Simple numerical experimental results were done to verify the validity of the method.

The ideas of the paper can bring aspects that need to be investigated further about 
multiobjective optimization methods. For example, one can extend it to the bilevel multiobjective optimization case, or consider a method that incorporates both adaptive and line search procedures in Riemannian multiobjective optimization.

\subsection*{Conflict of interest}
Authors have no conflict of interest to declare.



\begin{thebibliography}{99}
\bibitem{2}
Assun{\c{c}}{\~a}o, P.B., Ferreira, O.P., Prudente, L.F.: A generalized conditional gradient
method for multiobjective composite optimization problems. Optimization 1–31
25 (2023)

\bibitem{9}
Assun{\c{c}}{\~a}o, P.B., Ferreira, O.P., Prudente, L.F.: Conditional gradient method for
multiobjective optimization. Compt. Optim. Appl. {\bf 78},
741–768 (2021)


\bibitem{14}
Bauschke, H., Combettes, P.: Convex analysis and monotone operator theory in
Hilbert spaces. CMS books in mathematics {\bf 10}, 978–1 (2011)

\bibitem{3a}
Bello Cruz, J.Y.: A subgradient method for vector optimization problems. SIAM J. Optim. {\bf 23}(4), 2169–2182 (2013)

\bibitem{4}
Bello Cruz, Y., Melo, J.G., Serra, R.V.: A proximal gradient splitting method for
solving convex vector optimization problems. Optimization {\bf 71}(1), 33–53 (2022)

\bibitem{10}
Bertsekas, D.P.: Nonlinear Programming, 2nd edn. Athena Scientific, Belmont
(Mass.) (1999) 

\bibitem{18}
Bezoui, M., Moula{\"\i}, M., Bounceur, A., Euler, R.: An iterative method for solving a bi-objective constrained portfolio optimization problem. Compt. Optim. Appl. {\bf 72}, 479–498 (2019)

\bibitem{4a}
Bonnel, H., Iusem, A.N., Svaiter, B.F.: Proximal methods in vector optimization.
SIAM J. Optim. {\bf 15}(4), 953–970 (2005)

\bibitem{AA2}
Burachik, R.S., Kaya, C.Y., Rizvi, M.: A new scalarization technique and new
algorithms to generate Pareto fronts. SIAM J. Optim. {\bf 27}(2), 1010–
1034 (2017)



\bibitem{22eliant}
Chen, J., Tang, L., Yang, X.: Barzilai-Borwein proximal gradient methods for mul-
tiobjective composite optimization problems with improved linear convergence.
arXiv preprint arXiv:2306.09797 (2023)

\bibitem{20pomyant}
Chen, K., Fukuda, E.H., Yamashita, N.: A proximal gradient method with Bregman distance in multiobjective optimization. To appear in Pacific Journal of
Optimization (2023)

\bibitem{1}
Chen, W., Yang, X., Zhao, Y.: Conditional gradient method for vector optimization. Compt. Optim. Appl. {\bf 85}, 857–896 (2023)

\bibitem{17}
Chen, Y., Zhou, A.: Multiobjective portfolio optimization via Pareto front
evolution. Complex \& Intelligent Systems {\bf 8}(5), 4301–4317 (2022)

\bibitem{4EEE}
Cocchi, G., Liuzzi, G., Papini, A., Sciandrone, M.: An implicit filtering algorithm
for derivative-free multiobjective optimization with box constraints. Compt. Optim. Appl. {\bf 69}, 267–296 (2018)

\bibitem{4DDD}
Cocchi, G., Liuzzi, G., Lucidi, S., Sciandrone, M.: On the convergence of steepest
descent methods for multiobjective optimization. Compt. Optim. Appl. {\bf 77}, 1–27 (2020)



\bibitem{AA3}
Deb, K.: Multi-objective optimisation using evolutionary algorithms: an introduction. In: Multi-objective Evolutionary Optimisation for Product Design and
Manufacturing, pp. 3–34. Springer, London (2011)

\bibitem{19qq}
Duan, Y.C.: A multi-objective approach to portfolio optimization. Rose-Hulman Undergrad. Math J. {\bf 8}(1), 12 (2007)


\bibitem{AA1}
Eichfelder, G.: Scalarization approaches. Adaptive scalarization methods in
multiobjective optimization, 21–66 (2008)

\bibitem{3}
El Moudden, M., El Ghali, A.: A new reduced gradient method for solving linearly
constrained multiobjective optimization problems. Compt. Optim. Appl. {\bf 71}, 719–741 (2018)

\bibitem{AAAA1}
Eschenauer, H., Koski, J., Osyczka, A.: Multicriteria optimization—Fundamentals
and motivation. In: Multicriteria Design Optimization: Procedures and Applications, pp. 1–32. Springer, Berlin, Heidelberg (1990)



\bibitem{4ab}
Fliege, J., Svaiter, B.F.: Steepest descent methods for multicriteria optimization.
Math. Methods Oper. Res. {\bf 51}, 479–494 (2000)

\bibitem{5}
Frank, M., Wolfe, P., et al.: An algorithm for quadratic programming. Naval Res. Logist. {\bf 3}(1-2), 95–110 (1956)




\bibitem{8}
Gabidullina, Z.: Adaptive conditional gradient method. J. Optim. Theory Appl. {\bf 183}(3), 1077–1098 (2019)

\bibitem{4CCC}
Gon{\c{c}}alves, M.L.N., Lima, F.S., Prudente, L.F.: Globally convergent Newton-
type methods for multiobjective optimization. Compt. Optim. Appl. {\bf 83}(2), 403–434 (2022)


\bibitem{7}
Konnov, I.: Simplified versions of the conditional gradient method. Optimization
{\bf 67}(12), 2275–2290 (2018)

\bibitem{6}
Lan, G., Romeijn, E., Zhou, Z.: Conditional gradient methods for convex
optimization with general affine and nonlinear constraints. SIAM J. Optim. {\bf 31}(3), 2307–2339 (2021)

\bibitem{6aaaa} Markowitz. H.: Portfolio selection. Journal of Finance {\bf 7}, 77–91 (1952)


\bibitem{15}
Miettinen, K.: Nonlinear multiobjective optimization. (Vol. 12) In: Int. Ser. Oper. Res. Manag. Sci. (1998) 



\bibitem{21rakant}
Nishimura, Y., Fukuda, E.H., Yamashita, N.: Monotonicity for multiobjective
accelerated proximal gradient methods. J. Oper. Res. Soc. Jpn. {\bf 67}(1), 1–17 (2024)


\bibitem{16}
Polyak, B.T.: Introduction to Optimization. Optimization Software Inc, NewYork
(1987)

\bibitem{16aaaaa} Qu, Q., Ma, Z., Clausen, A., Jørgensen, B. N.:  A comprehensive review of machine learning in multi-objective optimization. In 2021 IEEE 4th International Conference on Big Data and Artificial Intelligence (BDAI). IEEE. 7-14 (2021)

\bibitem{AAAA2}
Schreibmann, E., Lahanas, M., Xing, L., Baltas, D.: Multiobjective evolutionary
optimization of the number of beams, their orientations and weights for intensity-
modulated radiation therapy. Phys. Med. Biol. {\bf 49}(5), 747 (2004)


\bibitem{13}
Tanabe, H., Fukuda, E.H., Yamashita, N.: An accelerated proximal gradient method for multiobjective optimization. Compt. Optim. Appl. {\bf 86}, 421–455 (2023)

\bibitem{12}
Tanabe, H., Fukuda, E.H., Yamashita, N.: Convergence rates analysis of a multiobjective proximal gradient method. Optim. Lett. {\bf 17}(2), 333–350
(2023)


\bibitem{10aabbcc}
Tanabe, H., Fukuda, E.H., Yamashita, N.: New merit functions for multiobjective
optimization and their properties. Optimization 1–38 (2023)

\bibitem{4aa}
Tanabe, H., Fukuda, E.H., Yamashita, N.: Proximal gradient methods for multiobjective optimization and their applications. Compt. Optim. Appl. {\bf 72}, 339–361 (2019)

\end{thebibliography}


\end{document}